\definecolor{curvyarrow}{RGB}{200, 100, 150}
\newtheoremstyle{results}
{8pt}
{8pt}
{\itshape}
{}
{\bfseries}
{}
{.5em}
{}
\theoremstyle{results}
\newtheorem{thm}{Theorem}[section]
\newtheorem{proposition}[thm]{Proposition}
\newtheorem{corollary}[thm]{Corollary}
\newtheorem{lemma}[thm]{Lemma}
\newtheorem{maintheorem}[thm]{Theorem}
\newtheoremstyle{definitions}
{7pt}
{7pt}
{}
{}
{\bfseries}
{}
{.5em}
{}
\theoremstyle{definitions}
\newtheorem{definition}[thm]{Definition}
\newtheorem{conjecture}[thm]{Conjecture}
\newtheorem{example}[thm]{Example}
\newtheorem{question}[thm]{Question}
\crefname{thm}{theorem}{theorems}
\Crefname{thm}{Theorem}{Theorems}
\crefname{lemma}{lemma}{lemmas}
\Crefname{lemma}{Lemma}{Lemmas}
\crefname{proposition}{proposition}{propositions}
\Crefname{proposition}{Proposition}{Propositions}
\crefname{corollary}{corollary}{corollaries}
\Crefname{corollary}{Corollary}{Corollaries}
\crefname{definition}{definition}{definitions}
\Crefname{definition}{Definition}{Definitions}
\crefname{conjecture}{conjecture}{conjectures}
\Crefname{conjecture}{Conjecture}{Conjectures}
\crefname{example}{example}{examples}
\Crefname{example}{Example}{Examples}
\crefname{remark}{remark}{remarks}
\Crefname{remark}{Remark}{Remarks}
\def\Z{\mathbb{Z}}
\def\R{\mathbb{R}}
\def\C{\mathbb{C}}
\def\F{\mathbb{F}}
\def\P{\mathbb{P}}
\def\Z{\mathbb{Z}}
\def\calA{\mathcal{A}}
\def\calC{\mathcal{C}}
\def\calL{\mathcal{L}}
\def\calS{\mathcal{S}}
\newcommand{\abs}[1]{\left\lvert#1\right\rvert}
\newcommand{\QnG}{\textrm{Q}_n(\textrm{G})}
\newcommand{\CH}{\mathrm{CH}}
\newcommand{\uH}{\underline{\mathrm{H}}}
\newcommand{\ChowGr}
{\underline{\mathrm{CH}}}
\newcommand{\G}{\textrm{G}}
\newcommand{\Tr}{\operatorname{Tr}}
\newcommand{\trunc}{\Tr}
\newcommand{\uCH}{\underline{\textrm{CH}}}
\newcommand{\rk}{\operatorname{rk}}
\newcommand{\M}{\mathrm{M}}
\newcommand{\floor}[1]{\lfloor #1 \rfloor}
\renewcommand{\L}{\mathcal{L}}
\newcommand{\zero}{\hat{0}}
\renewcommand{\S}{\calS}
\newcommand{\des}{\mathrm{des}}
\newcommand{\un}{\hat{1}}
\newcommand{\At}{\mathrm{At}}
\newcommand{\Qd}{\mathrm{Q}}
\newcommand{\Edge}{\mathcal{E}}
\newcommand{\Latt}{\mathrm{L}}
\newcommand{\medbullet}{\mathbin{\vcenter{\hbox{\scalebox{0.6}{$\bullet$}}}}}
\renewcommand{\L}{\mathcal{L}}
\newcommand{\Ind}{\mathrm{Ind}}
\newcommand{\Harr}{\mathcal{H}}
\newcommand{\Arr}{\mathcal{A}}
\newcommand{\Conf}{\mathrm{Conf}}
\newcommand{\Des}{\mathrm{Des}}
\newcommand{\ndd}{\mathrm{ndd}}
\newcommand{\Pdual}{\P^{\ast}}
\renewcommand{\P}{\mathrm{P}}
\renewcommand{\H}{\mathrm{H}}
\renewcommand{\hat}{\widehat}
\newcommand{\ind}{\mathrm{Ind}}
\newcommand{\asce}{\mathsmaller{\pmb{\smallnearrow}}}
\newcommand{\desc}{\mathsmaller{\pmb{\smallsearrow}}}
\newcommand{\smallnearrow}{\tikz[scale=0.18, baseline={(0, 0)}, >={Stealth}]\draw[->, thick, line width=0.4pt] (0, 0) -- (1,1);}
\newcommand{\smallsearrow}{\tikz[scale=0.18, baseline={(0, 0)}, >={Stealth}]\draw[->, thick, line width=0.4pt] (0,1) -- (1,0);}
\title{Chow polynomials of rank-uniform labeled posets}
\author{Basile Coron}
\address{B. Coron, Centre de Mathématiques Laurent Schwartz, Polytechnique, Paris, France}
\email{basile.coron@polytechnique.edu}
\author{Luis Ferroni}
\address{L. Ferroni, Dipartimento di Matematica, Universit\`a di Pisa, Pisa, Italy}\email{luis.ferroni@unipi.it}
\author{Shiyue Li}
\address{S. Li, Department of Mathematics, University of Michigan, Ann Arbor, USA}\email{shiyueli@umich.edu}
\date{\today}
\begin{document}
\begin{abstract}
     We introduce and develop the theory of UMEL-shellable posets. These are posets equipped with an edge-lexicographical labeling satisfying certain uniformity and monotonicity properties. This framework encompasses classical families of combinatorial geometries, including uniform matroids, projective and affine geometries, braid matroids of type A and B, and all Dowling geometries. It also comprises all rank-uniform semimodular supersolvable lattices, and therefore also all rank-uniform distributive lattices. Our main result establishes real-rootedness phenomena for the Chow polynomials, the augmented Chow polynomials, and the chain polynomials associated with those posets, thus making simultaneous progress towards conjectures by Ferroni--Schr\"oter, Huh--Stevens, and Athanasiadis--Kalampogia-Evangelinou. In the special case of lattices of flats of matroids, the (augmented) Chow polynomials coincide with the Hilbert--Poincar\'e series of the Chow ring associated to the smooth and generally noncompact toric varieties of the (augmented) Bergman fan of the matroid, whereas the chain polynomial encodes the Hilbert--Poincar\'e series of the Stanley--Reisner ring of the Bergman complex of the matroid. Therefore, these real-rootedness results are tightly linked to the study of these algebro-geometric structures in matroid theory.
\end{abstract}

\maketitle
\setcounter{tocdepth}{1}\tableofcontents

\section{Introduction}

\subsection{Real-rootedness in algebraic and geometric combinatorics} 
Over the past few years, the algebraic study of matroids and geometric lattices has been transformed by a surge of ideas bridging combinatorics, geometry, and algebra. What began as the search for combinatorial proofs of positivity conjectures has grown into a rich and interconnected theory, informed by deep insights from algebraic and polyhedral geometry (see \cite{ardila-icm22,huh-icm22,eur}). Yet, this progress has also given rise to new challenges. Several algebraically defined polynomial invariants of matroids have been recently conjectured to have only real roots, bringing analytic tools into play.

Among \emph{positivity} phenomena for polynomials with nonnegative coefficients, real-rootedness stands as the strongest: it is equivalent to the Toeplitz matrix of a polynomial being totally nonnegative (\cite[Theorem~1.3]{branden2015unimodality}) and, by Newton’s inequalities \cite[Theorem~2]{stanley-unimodality}, it implies the log-concavity and the unimodality of the coefficients. Many fascinating connections of this property with the notions of $\gamma$-positivity and Koszulity of rings (see, e.g., \cite[Section~4]{reiner-welker}, \cite[Section~5.4]{ferroni-matherne-stevens-vecchi}) have also motivated new avenues of research.

While many algebraic and geometric invariants of matroids are known to be log-concave, unimodal, or $\gamma$-positive, the property of real-rootedness remains more elusive. A central motivating question of the present work is the following conjecture of Ferroni and Schröter \cite[Conjecture~8.18]{ferroni-schroter}.

\begin{conjecture}\label{conj:real-rootedness-chow}
    The Chow polynomial of a matroid has only nonpositive real roots.
\end{conjecture}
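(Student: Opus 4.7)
The plan is to attack the conjecture by combining an inductive decomposition of the Chow polynomial with a combinatorial real-rootedness engine. Given a matroid $M$ with lattice of flats $\L(M)$, I would first look for an edge-lexicographic labeling of $\L(M)$ whose local combinatorics are uniform across flats of the same rank, in the spirit of the UMEL-shellable framework announced in the abstract. Such a labeling converts the Chow polynomial $\uH(M;t)$ into a generating function over maximal chains of $\L(M)$ whose descent/ascent statistics are controlled by the labels, opening the door to an interlacing argument.

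With such a labeling in hand, I would induct on the rank of $M$. The main workhorse would be the semi-small decomposition of Braden--Huh--Matherne--Proudfoot--Wang, which expresses $\uH(M;t)$ as a nonnegative combination, indexed by proper flats $F$, of pieces built from the Chow and augmented Chow polynomials of the restriction $M|F$ and the contraction $M/F$. Real-rootedness of $\uH(M;t)$ would then follow if, for each fixed rank, the pieces indexed by flats of that rank admit a common interlacer; one could appeal to the interlacing-family formalism of Marcus--Spielman--Srivastava, or directly to the classical sum-of-interlacers criterion. The uniformity of a UMEL-labeling is precisely what would allow a single polynomial to serve as the common interlacer for all flats of a fixed rank, while the monotonicity property would govern the order in which the pieces are assembled and keep the interlacer correctly positioned at each inductive step. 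The inductive check then reduces to a polynomial identity tracking how ascent and descent edges of the shelling interact with the semi-small recursion.

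The main obstacle is that no labeling with the required uniformity is known to exist for an arbitrary matroid: the upper intervals $[F,\widehat{1}]$ can vary wildly as $F$ ranges over flats of the same rank, and this is precisely the phenomenon that rank-uniform supersolvable lattices, projective and affine geometries, and Dowling geometries rule out structurally. To extend the argument beyond these classes, I would try to weaken pointwise uniformity of the labeling to an averaged interlacing statement --- namely, that the various local contributions, while individually non-uniform, still jointly interlace a common polynomial built from Chow polynomials of smaller minors. Establishing this weaker interlacing without a shelling-level uniformity crutch is where I expect the program to stall; it appears to require a genuinely new Hodge-theoretic or algebraic input that is not visible from the shellability side alone.
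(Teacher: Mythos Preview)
First, a framing point: Conjecture~\ref{conj:real-rootedness-chow} is stated in the paper as an \emph{open conjecture}, and the paper does not prove it in general. What the paper does prove is the special case where $\mathcal{L}(\M)$ is UMEL-shellable (Theorem~\ref{thm:main1-intro}), and you correctly identify both that the UMEL framework handles those cases and that the program stalls for arbitrary matroids because no suitable uniform labeling is available. So as a proof of the full conjecture your proposal is, by your own admission, incomplete, and this matches the state of affairs in the paper.

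Where your sketch departs from the paper is in the machinery you would use even in the UMEL-shellable regime. You propose to drive the induction with the semi-small decomposition of Braden--Huh--Matherne--Proudfoot--Wang, indexing pieces by flats $F$ and invoking Marcus--Spielman--Srivastava-style common interlacers rank by rank. The paper does \emph{not} go through the semi-small decomposition at all. Instead it passes to the $\gamma$-polynomial of $\uH_{\P}$ via the flag $h$-vector formula (Theorem~\ref{thm:gamma-flag-h-vector}), interprets $\gamma(\uH_{\P};y)$ as a descent-weighted count of maximal chains with no double descent, and then refines that count not by flats but by the \emph{index of the first edge label} of the chain. The resulting vectors of polynomials satisfy an explicit linear recursion in the corank, and the interlacing engine is Br\"and\'en's criterion (Theorem~\ref{thm:interlacing}) for polynomial matrices preserving interlacing sequences, checked via Lemma~\ref{lem:two-column-monotonicity-interlacing}. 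The monotonicity condition in UMEL enters exactly to make these transition matrices satisfy the $2\times 2$ interlacing checks. Your intuition that ``monotonicity governs the order in which pieces are assembled'' is morally right, but the actual mechanism is a matrix-acting-on-an-interlacing-sequence argument, not a common-interlacer-over-flats argument.

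Your suggestion to weaken pointwise uniformity to an averaged interlacing statement is reasonable speculation, but be aware that the paper's method genuinely needs the uniformity to write down the recursion matrices at all; without it there is no single transition matrix per rank, and Theorem~\ref{thm:interlacing} has nothing to act on. Any extension beyond UMEL would have to replace this step, not merely relax it.
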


This conjecture, and a counterpart for \emph{augmented} Chow polynomials, has been independently proposed by Huh and Stevens \cite[Conjecture~4.3.3]{stevens-bachelor}. 

The Chow polynomial of $\M$ is originally defined as the Hilbert--Poincar\'e series of the Chow ring of $\M$. The Chow ring of $\M$ was introduced by Feichtner and Yuzvinsky~\cite{feichtner2004chow} and played a crucial role in the resolution of the Heron--Rota--Welsh conjecture by Adiprasito, Huh, and Katz~\cite{adiprasito2018hodge}. When the matroid $\M$ is represented by a  hyperplane arrangement $\calA$ of size $n$, the Chow ring of $\M$ is the Chow ring of the wonderful variety, or the wonderful compactification of the complement of $\calA$, introduced by De Concini and Procesi in~\cite{de1995wonderful}. The wonderful variety is a subvariety of the smooth and projective permutohedral toric variety $X_{\underline{A}_n}$, and its Chow ring admits a set of generators given by base-point-free divisor classes pulled back from $X_{\underline{A}_n}$. As a graded vector space over any infinite field, the Chow ring has two combinatorial monomial bases, given by nested sets in the lattice of flats \cite{feichtner2004chow} and  nested matroid quotients \cite{backman2023simplicial}. The augmented Chow polynomial is defined analogously as the Hilbert--Poincar\'e series of the augmented Chow ring of $\M$; this ring was introduced by Braden, Huh, Matherne, Proudfoot, and Wang \cite{BRADEN2022108646} and was instrumental in the proof of the Dowling--Wilson top-heavy conjecture by the same authors \cite{braden2020singular}. When the matroid $\M$ is represented by a hyperplane arrangement $\calA$, the augmented Chow ring of $\M$ is the Chow ring of an iterated blowup, called the augmented wonderful variety, of the matroid Schubert variety of $\M$ introduced by Ardila and Boocher \cite{ardila2016closure}. The wonderful variety of $\calA$ appears as a divisor inside the augmented wonderful variety of $\calA$. The augmented wonderful variety is a subvariety of the smooth and projective stellahedral toric variety $X_{A_n}$, and its Chow ring admits a set of generators given by base-point-free divisor classes pulled back from $X_{A_n}$ \cite{eur2022stellahedral}. The two monomial bases of the Chow ring of $\M$ immediately generalize to augmented monomial bases of the augmented Chow ring of $\M$. Therefore, the (augmented) Chow polynomial can be viewed as the generating polynomials of these monomial bases. Despite the rich structural results about these varieties, we do not know algebro-geometric explanations for why their Chow rings exhibit real-rooted Hilbert--Poincaré series in extensive computations. In contrast, closely related projective spaces, Grassmannians and flag varieties are well-known to have Chow rings with Hilbert--Poincaré series possessing non-real roots. 

In recent work of Ferroni, Matherne, and Vecchi \cite{ferroni-matherne-vecchi} an alternative combinatorial definition of Chow and augmented Chow polynomials of graded posets is introduced, circumventing the necessity of taking the Hilbert--Poincar\'e series of a ring. When the poset equals $\mathcal{L}(\M)$, the lattice of flats of a matroid $\M$, these notions agree with the Hilbert--Poincar\'e series of the Chow ring and the augmented Chow ring of $\M$ (we recapitulate the key definitions in Section~\ref{sec:chow-poly-recap}). In a nutshell, the Chow polynomial $\uH_\P(x)$ of a graded bounded poset $\P$ is the value that the inverse of the reduced characteristic function takes at $[\widehat{0},\widehat{1}]$ in the incidence algebra of $\P$ (here $\widehat{0}$ and $\widehat{1}$ denote the minimum and maximum elements of $\P$). The augmented Chow polynomial $\P$, denoted $\H_\P(x)$, is the Chow polynomial of the poset obtained by adding a minimum below $\P$. This poset-theoretic perspective enables us to formulate our first main result.
\begin{maintheorem}
\label{thm:main1-intro}
     Let $\P$ be a UMEL-shellable poset. 
    Then the following statements hold: 
    \begin{enumerate}[\normalfont (i)]
        \item The Chow polynomial $\uH_{\P}(x)$ only has real and nonpositive roots. 
        \item The augmented Chow polynomial $\H_{\P}(x)$ only has real and nonpositive roots. 
    \end{enumerate}
\end{maintheorem}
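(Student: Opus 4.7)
The plan is to prove (i) and (ii) simultaneously by induction on the rank of $\P$, strengthening the inductive hypothesis to include an interlacing relation between $\uH_\P(x)$, $\H_\P(x)$, and the analogous polynomials for certain distinguished subintervals. I would first convert $\uH_\P(x)$ into a chain sum controlled by the UMEL labeling: by the combinatorial definition of Ferroni--Matherne--Vecchi, $\uH_\P(x)$ expands as a sum over chains $\zero = F_0 < F_1 < \cdots < F_k = \un$ with polynomial contributions at each covering step. An EL-shelling singles out, in each interval, a unique strictly decreasing saturated chain, which after sign cancellation converts the expansion into a sum indexed by chains whose label sequences satisfy prescribed ascent patterns. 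The monotonicity condition of UMEL-shellability restricts these patterns, while the uniformity condition makes the contribution of a chain depend only on its label sequence and not on the chain itself.

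The next step is to establish a recursion. Conditioning on a choice of atom $a \in \P$, I would split the sum into parts indexed by chains starting with $\zero \lessdot a$. The upper interval $[a, \un]$ should inherit UMEL-shellability (possibly after a canonical relabeling), so the inductive hypothesis applies and yields that $\uH_{[a, \un]}(x)$ and $\H_{[a, \un]}(x)$ are real-rooted and interlace. By uniformity of the labeling, atoms carrying equal labels contribute the same upper-interval polynomial, weighted by a simple factor coming from the single-step contribution at $\zero \lessdot a$. The result is an explicit recursion expressing $\uH_\P(x)$ as a nonnegative linear combination of real-rooted polynomials indexed by labels of atoms. The augmented version $\H_\P(x) = \uH_{\zero \star \P}(x)$ is handled by the same method, using that $\zero \star \P$ is itself UMEL-shellable, with the two recursions linked so that interlacing between $\uH_\P$ and $\H_\P$ can be carried through the induction.

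The crux, and the main obstacle I anticipate, is verifying that the recursion preserves real-rootedness. The natural route is to show that the family of polynomials $\{\uH_{[a, \un]}(x)\}_a$ appearing in the recursion admits a common interlacer, so that any nonnegative linear combination remains real-rooted by Br\"and\'en's compatibility criterion. The monotonicity of UMEL-shellability should force this family to be linearly ordered by interlacing, while uniformity collapses it to a small number of distinct polynomials for which a common interlacer can be exhibited explicitly. Identifying the correct common interlacer---likely $\uH$ or $\H$ of a canonically chosen subinterval of $\P$---is the step where the full strength of the UMEL axioms must be invoked, and doing so cleanly is what I expect to be the most delicate part of the argument.
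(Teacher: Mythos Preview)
Your overall architecture---induction on rank, refinement by the first step of a chain, and propagation of interlacing---matches the paper's. But two concrete ingredients are missing, and without them the argument does not go through.

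First, the refining family you name, $\{\uH_{[a,\un]}(x)\}_a$, collapses to a \emph{single} polynomial: rank-uniformity (in the labeled sense, hence also in the ordinary sense by Lemma~\ref{lem:el-rank-upper-implies-rank-upper}) forces all upper intervals $[a,\un]$ over atoms $a$ to have identical Whitney numbers and hence identical Chow polynomials. So this family cannot form a nontrivial interlacing sequence, and finding a ``common interlacer'' for it is vacuous. The paper refines instead by the \emph{index $i$ of the first label} together with whether the first two labels form an ascent or a descent, producing three genuinely distinct families $\gamma^n_{i,\asce}$, $\gamma^n_i$, $\gamma^n_{i,\desc}$. The monotonicity axiom is what makes these families interlace in $i$, via transition matrices of the shape in Lemma~\ref{lem:two-column-monotonicity-interlacing}.

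Second, you are missing the reduction to $\gamma$-polynomials. The paper does not run the induction on $\uH_\P(x)$ directly; it first passes to $\gamma(\uH_\P;y)$ and $\gamma(\H_\P;y)$, which is equivalent for real-rootedness by Lemma~\ref{lem:gamma-transfo-RR}. After this change of variable, Theorem~\ref{thm:gamma-flag-h-vector} and Lemma~\ref{lem:beta-counts-des} identify the $\gamma$-polynomial as the descent generating function over maximal chains with \emph{stable} descent set (no two consecutive descents). It is precisely this ``no double descent'' constraint that makes the one-step recursion clean: peeling off the first edge either creates a descent (forcing the next step to be an ascent) or does not, and both branches feed into the inductive families for the corank-$(n-1)$ interval. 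Working directly with the FMV chain expansion of $\uH_\P(x)$, as you propose, does not isolate this structure. The inductive load is then carried by a full $3\times\ell$ diagram of interlacing relations (Lemma~\ref{lem:induction}), not just a single common interlacer.
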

The precise definition of a \emph{UMEL-shellable} poset is presented in Section~\ref{sec:rank-uniform-labeled-posets}. This new class of posets is a subclass of EL-shellable posets \cite{bjorner-el-shellable}, i.e., posets admitting an edge-lexicographic labeling (for more details, see the recapitulation in Section~\ref{sec:definitions}). The letter `U' in the acronym UMEL stands for posets being \emph{rank-uniform}: this means that for any two elements $s,t\in \P$ of the same rank, the upper intervals $[s,\widehat{1}]$ and $[t,\widehat{1}]$ have the same Whitney numbers of the second kind, i.e., the same number of rank $j$ elements for each $j$. When $\P$ is a geometric lattice these posets were studied in great detail by Brylawski\footnote{Brylawski~\cite[p.~223]{brylawski} uses the term \emph{upper combinatorially uniform} (or plainly uniform) for the posets we call rank-uniform.} \cite{brylawski}. The letter `M' stands for a technical monotonicity condition that the edge-lexicographic labeling must satisfy. These conditions altogether provide us with new recurrence relations for Chow polynomials, which we employ along with the machinery of interlacing families to deduce several real-rootedness results. The proof of the above theorem also enables us to uncover strong interlacing phenomena between the roots of the foregoing polynomials. For UMEL-shellable posets, the first of the five statements below settles \cite[Conjecture~5.7]{ferroni-matherne-stevens-vecchi}. The other four statements settle a private conjecture of Li (see also~\Cref{conj:contraction-interlace} and \Cref{conj:truncation-interlace}). In the case of Boolean lattices, the fourth statement recovers that the consecutive Eulerian polynomials interlace \cite[Theorem 2.7]{savage2015}; the fifth statement recovers that the $n$th Eulerian polynomial interlaces with the $n$th derangement polynomial {\cite[Proof of Theorem 3.6]{branden2021symmetric}}. 

\begin{maintheorem}
\label{thm:main-interlacing-intro}
    Let $\P$ be a UMEL-shellable poset. The following statements hold: 
    \begin{enumerate}[\normalfont(i)]
        \item The roots of $\uH_{\P}(x)$ and $\H_{\P}(x)$ interlace. 
        \item For an atom $a$ of $\P$, the roots of $\H_{[a, \un]}(x)$ and $\uH_{\P}(x)$ 
        interlace. 
        \item For an atom $a$ of $\P$, the roots of $\H_{[a, \un]}(x)$ and $\H_{\P}(x)$ 
        interlace.
        \item For an atom $a$ of $\P$ with a mild condition (see \Cref{thm:gamma-interlacing-aug-contract}), the roots of $\uH_{[a, \un]}(x)$ and $\uH_{\P}(x)$ 
        interlace. 
        \item Let $\Tr(\P)$ be the truncation of $\P$. The polynomials $\H_{\Tr(\P)}(x)$, and $\uH_{\Tr(\P)}(x)$ are real-rooted, and the following interlacing relations hold (where $\preceq$ indicates root interlacing): 
        \[
        \begin{tikzcd}
        \uH_{\Tr(\P)} \arrow[r, "\preceq"] \arrow[d, "\preceq"] & \uH_{\P} \arrow[d, "\preceq"] \\
        \H_{\Tr(\P)} \arrow[r, "\preceq"] & \H_{\P}.
        \end{tikzcd}
        \] 
    \end{enumerate}
\end{maintheorem}

For the precise results implying the properties in the preceding list, see \Cref{thm:gamma-interlacing-aug-contract} and \Cref{thm:gamma-truncation-interlacing}. Several interesting classes of matroids are covered by the preceding results (see Theorem~\ref{thm:main-matroid-specializations} below). Furthermore, the above result also makes progress towards a more general conjecture on the real-rootedness of Chow polynomials appearing in \cite[Conjecture~1.5]{ferroni-matherne-vecchi} by Ferroni, Matherne, and Vecchi. That conjecture proposes that the Chow polynomial of any Cohen--Macaulay poset is real-rooted. The posets considered in this article are Cohen--Macaulay as a consequence of being EL-shellable.

\medskip 

Let us denote by $\widehat{\mathcal{L}}(\M)$ the set of all proper nonempty flats of $\M$. Since the poset $\mathcal{L}(\M)$ is a geometric lattice, the order complex $\Delta(\widehat{\mathcal{L}}(\M))$, often called the \emph{Bergman complex of $\M$}, displays many desirable combinatorial and topological features---for example it is shellable and thus Cohen--Macaulay.
The $f$-vector of the Bergman complex of $\M$ encodes the number of flags of flats in $\widehat{\mathcal{L}}(\M)$ according to size, and so its generating function, the $f$-polynomial of the complex, is often called the \emph{chain polynomial} of $\widehat{\mathcal{L}}(\M)$. A recent conjecture by Athanasiadis and Kalampogia-Evangelinou \cite[Conjecture~1.2]{athanasiadis-kalampogia} proposes that the chain polynomial of a geometric lattice is a real-rooted polynomial. This conjecture can be recast in terms of $h$-polynomials (which are linear transformations of $f$-polynomials) as follows.

\begin{conjecture}\label{conj:real-rootedness-chain}
    The $h$-polynomial of the order complex of the lattice of flats (or, equivalently, the Bergman complex) of a matroid has only nonpositive real roots.
\end{conjecture}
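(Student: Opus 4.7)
The plan is to adapt the UMEL machinery driving \Cref{thm:main1-intro} to the chain polynomial. The first ingredient is Björner's theorem: for any EL-shellable bounded graded poset $\P$, the $h$-polynomial of the order complex of the proper part is
\[
h_\P(x) \;=\; \sum_{C} x^{\des(C)},
\]
summed over maximal chains $\hat{0}=x_0\lessdot x_1\lessdot\cdots\lessdot x_r=\hat{1}$, with $\des(C)$ the number of strict descents in the edge-label sequence of $C$. Since every UMEL-shellable poset is in particular EL-shellable, this description is available in our setting.

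Because the descent contribution of the initial edge $\hat{0}\lessdot x_1$ depends on the label $\lambda(x_1,x_2)$ of the next edge, a naive decomposition of chains by first atom does not immediately yield a clean recurrence. The natural fix is to refine the chain polynomial by the first-step direction: introduce auxiliary polynomials $h_\P^{\nearrow}(x)$ and $h_\P^{\searrow}(x)$ that count only maximal chains whose first label is an ascent, resp.\ a descent, against the second label. These satisfy $h_\P(x)=h_\P^{\nearrow}(x)+h_\P^{\searrow}(x)$, and the monotonicity half of UMEL should ensure that for each atom $a$ the ascent/descent status of $\lambda(\hat{0},a)$ against the continuation is determined uniformly across extensions above $a$. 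This yields a clean atomic recurrence expressing each refinement as a sum over atoms of refined chain polynomials of the upper intervals $[a,\hat{1}]$, weighted by monomials in $x$.

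The next step is induction on the rank of $\P$. The inductive hypothesis should carry not just real-rootedness of each $h_{[a,\hat{1}]}(x)$, but the stronger statement that the refined family $\{h^{\nearrow}_{[a,\hat{1}]}(x),\,h^{\searrow}_{[a,\hat{1}]}(x)\}_a$ forms an interlacing family in the sense of Marcus--Spielman--Srivastava, compatible with the atomic decomposition. Summing the recurrences with nonnegative monomial weights then preserves real-rootedness and transmits interlacing to $\P$. In parallel one should establish the chain-polynomial analogue of \Cref{thm:main-interlacing-intro}, namely that the roots of $h_{[a,\hat{1}]}(x)$ interlace those of $h_\P(x)$ for every atom $a$; this is precisely the data that feeds the induction at the next rank, and rank-uniformity is what guarantees that the data is consistent across atoms.

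The main obstacle I anticipate is setting up the interlacing family structure for these refined chain polynomials. Unlike the Chow polynomial, whose defining recursion lives in the incidence algebra and is inherently algebraic, the descent statistic is a combinatorial gadget whose interaction with the ordering of roots is subtle: a purely combinatorial coupling between maximal chains of different upper intervals, preserving both descent counts and the ascent/descent type of the first edge, will be needed to verify the sign conditions underlying the Marcus--Spielman--Srivastava framework. The monotonicity component of UMEL is designed to make such a coupling possible, and the uniformity component is what makes the resulting family structure rigid enough to interlace; carrying out this verification, and matching it against the auxiliary polynomials $h^{\nearrow}$ and $h^{\searrow}$, is where the real work will lie. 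Once this is done, real-rootedness of the $h$-polynomial of the order complex of any UMEL-shellable poset follows, delivering substantial progress on \Cref{conj:real-rootedness-chain}.
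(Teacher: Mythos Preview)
Your overall architecture---Bj\"orner's descent formula for the $h$-polynomial, a refinement of the chain polynomial, induction on rank carrying an interlacing family---is exactly the paper's strategy for \Cref{thm:main2-intro}. The gap is in the choice of refinement. Splitting into $h_\P^{\nearrow}$ and $h_\P^{\searrow}$ according to whether the \emph{first} step is an ascent or descent against the second does not yield a closed recursion: for a chain $\hat{0}\lessdot a\lessdot t\lessdot\cdots$ with $\ind(\hat{0},a)=i$, whether the first step is an ascent depends on whether $\ind(a,t)>\des_0(i)$, so the contribution you pick up at the next level is $\sum_{j>\des_0(i)}h^j_{[a,\hat{1}]}$, a partial sum over the \emph{label indices} $j$ above $a$. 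This cannot be expressed in terms of $h^{\nearrow}_{[a,\hat{1}]}$ and $h^{\searrow}_{[a,\hat{1}]}$ alone, and the claim that the ascent/descent status of $\lambda(\hat{0},a)$ is ``determined uniformly across extensions above $a$'' is simply false: for most atoms some continuations give ascents and others descents.

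The fix, which is what the paper does, is to refine not by ascent/descent type but by the \emph{index} $i$ of the first label: set $h^i_k(y)=\sum_{C:\,\ind(\text{first edge})=i}y^{\des(C)}$ for chains in an upper interval of corank $k$. Rank-uniformity makes this vector depend only on $k$, and the recursion becomes $\mathbf{h}_{k+1}(y)=\mathbf{A}_{k+1}(y)\cdot\mathbf{h}_k(y)$ where the entries of $\mathbf{A}_{k+1}$ are $\omega_{k+1}(i)\cdot y$ for $j\le\des_{k+1}(i)$ and $\omega_{k+1}(i)$ otherwise. Monotonicity of $\des_{k+1}(\cdot)$ is precisely what makes this matrix satisfy the hypotheses of Br\"and\'en's interlacing-preservation theorem (\Cref{thm:interlacing} via \Cref{lem:two-column-monotonicity-interlacing}), so the vector $\mathbf{h}_k$ stays interlacing throughout the induction, and the $h$-polynomial is the sum of its entries. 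No Marcus--Spielman--Srivastava machinery or bijective coupling of chains is needed; the argument is entirely linear-algebraic once the correct refinement is in place. (Your $\nearrow/\searrow$ splitting does appear in the paper, but only in the more delicate Chow-polynomial proof of Section~\ref{sec:chow-RR}, where it is layered \emph{on top of} the index refinement to handle the stable-descent-set constraint.)
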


The $h$-polynomial of a $d$-dimensional simplicial complex $\Delta$ will be denoted by $h(\Delta;y)$. 
It can also be defined by
\[
\operatorname{Hilb}(S/I_{\Delta}; y) \coloneqq \sum_{i\geqslant 0} \dim (S/I_{\Delta})_i\, y^i = \frac{h(\Delta; y)}{(1-y)^{d+1}}, 
\]
where $S/I_{\Delta}$ is the Stanley-Reisner ring of $\Delta$ over any field, and $\operatorname{Hilb}(\,\cdot\,;y)$ stands for Hilbert--Poincar\'e series. The Chow ring $\ChowGr(\M)$ can be presented as a quotient of the Stanley--Reisner ring of $\Delta(\widehat{\mathcal{L}}(\M))$. However, no general relationship between the Hilbert--Poincar\'e series of these rings is known---except for the special case of uniform matroids (see \cite[Conjecture~6.2]{hameister-rao-simpson} and \cite[Corollary~3.17]{ferroni-matherne-stevens-vecchi}).

Unlike the case of augmented Chow polynomials for which real-rootedness is conjectured to be true, the augmented counterpart of Conjecture~\ref{conj:real-rootedness-chain} is known to be false. The \emph{augmented Bergman complex}, introduced in \cite{BRADEN2022108646}, does not in general have a real-rooted $h$-polynomial---in fact, by results of Athanasiadis and Ferroni \cite[Section~4]{athanasiadis-ferroni} even unimodality fails for this polynomial. 

The following is another main result. It provides a proof of Conjecture~\ref{conj:real-rootedness-chain} for the class of UMEL-shellable posets.

\begin{maintheorem}
\label{thm:main2-intro}
    Let $\P$ be a UMEL-shellable poset. The $h$-polynomial of the order complex $\Delta(\P)$ of $\P$ only has real and nonpositive roots.
\end{maintheorem}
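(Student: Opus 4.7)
The plan is to reduce Main Theorem~3 to the real-rootedness and interlacing statements of Main Theorems~1 and~2 by expressing $h(\Delta(\P); y)$ as a nonnegative combination of real-rooted polynomials that together form a compatible interlacing family. The reduction passes through a combinatorial expansion of the chain polynomial coming from the EL-labeling, combined with a structural identity afforded by rank-uniformity and monotonicity.

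First, I would invoke Björner's theorem: since $\P$ is EL-shellable, $h(\Delta(\P); y)$ coincides with the descent generating polynomial $\sum_{c} y^{\des(c)}$ over maximal chains $c$ of $\P$, with descents measured against the edge-lexicographic labeling. I would then decompose this sum according to the initial atom $a$ appearing on each maximal chain. Rank-uniformity in UMEL ensures that the upper intervals $[a, \hat{1}]$ share matching rank data across atoms, and the monotonicity assumption controls how the first two edge labels of a chain interact. Together, these should yield an identity of the shape
\begin{equation*}
h(\Delta(\P); y) \;=\; \sum_{a \text{ atom of } \P} F_a(y)\, Q_a(y),
\end{equation*}
where $Q_a(y)$ is, up to a monomial factor, one of $\uH_{[a,\hat{1}]}(y)$, $\H_{[a,\hat{1}]}(y)$, or $h(\Delta([a,\hat{1}]); y)$, and $F_a(y)$ is an explicit nonnegative polynomial that absorbs the descent contribution of the bottom edge.

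With such a decomposition in hand, I would proceed by induction on the rank of $\P$. Every interval of a UMEL-shellable poset is UMEL-shellable, so the inductive hypothesis delivers real-rootedness for $h(\Delta([a,\hat{1}]); y)$, while Main Theorems~1 and~2 supply real-rootedness and interlacing for $\uH_{[a,\hat{1}]}(y)$ and $\H_{[a,\hat{1}]}(y)$. One then verifies that the summands $F_a(y)\, Q_a(y)$ form an interlacing family and applies the standard interlacing-families criterion to conclude that the sum is real-rooted. Because each factor has only nonpositive roots and nonnegative coefficients, the location of the roots in $(-\infty, 0]$ is automatic.

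The main obstacle is the precise atom-decomposition identity: isolating exactly which polynomial of each upper interval $[a,\hat{1}]$ appears, and accounting correctly for descents that straddle the bottom edge of a maximal chain. The monotonicity condition of UMEL should be the key tool here, but a careful case analysis based on the relative order of the first two labels $\lambda_1, \lambda_2$ of a maximal chain is likely unavoidable. A secondary challenge is arranging the collection $\{F_a(y)\, Q_a(y)\}_a$ into a family admitting a common interlacer; this may require a refinement of Main Theorem~2 that tracks more than just the atom, potentially incorporating the initial label as well, so that the resulting interlacing hypothesis fits the Chudnovsky--Seymour-style framework used in the earlier proofs.
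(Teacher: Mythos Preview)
Your proposal has the logical dependency reversed: in the paper, Theorem~\ref{thm:main2-intro} is proved \emph{first} and independently of the Chow results (Theorems~\ref{thm:main1-intro} and~\ref{thm:main-interlacing-intro}); the authors explicitly present it as the simpler warm-up whose ideas then feed into the more technical Chow argument. There is no reduction of the $h$-polynomial to Chow polynomials of upper intervals, so invoking Main Theorems~1 and~2 here is a red herring.

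More substantively, the decomposition $h(\Delta(\P);y)=\sum_a F_a(y)\,Q_a(y)$ with $Q_a$ depending only on $[a,\hat 1]$ does not exist. Whether the bottom edge contributes a descent depends on \emph{both} $\lambda(\hat 0,a)$ and the next label $\lambda(a,s_2)$, so for fixed $a$ the chains through $a$ contribute $\sum_{C'}y^{\des(C')+[\lambda(\hat 0,a)\,\vartriangleright\,\lambda(a,s_2)]}$, and the exponent correction varies with the second step $s_2$; this is not a scalar multiple of $h(\Delta([a,\hat 1]);y)$. The paper's remedy is exactly the refinement you gesture at in your last paragraph: refine not by atom but by the \emph{index} $i$ of the first label, setting $h_k^i(y)$ to be the descent generating function over maximal chains of $[s,\hat 1]$ whose first edge has label-index $i$, for any $s$ of corank $k$ (rank-uniformity makes this depend only on $k$). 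One then obtains a matrix recursion $\mathbf h_{k+1}(y)=\mathbf A_{k+1}(y)\,\mathbf h_k(y)$ in which the $(i,j)$ entry is $\omega(i)\,y$ for $j\leqslant\des(i)$ and $\omega(i)$ for $j>\des(i)$. Monotonicity of $\des(\cdot)$ is precisely what puts $\mathbf A_{k+1}$ in the scope of Lemma~\ref{lem:two-column-monotonicity-interlacing}, so it preserves interlacing by Theorem~\ref{thm:interlacing}; the vector $\mathbf h_k$ stays interlacing throughout the induction, and $h(\Delta(\P);y)=\sum_i h_n^i(y)$ is real-rooted. Your instinct that a label-indexed refinement is needed is correct, but it is the \emph{entire} argument rather than a patch on an atom decomposition, and it uses only the refined $h$-polynomials of upper intervals, never Chow polynomials.
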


The proof of this result is very similar in spirit to the proof of Theorem~\ref{thm:main1-intro}. As is hinted from the above discussion, it is unexpected that one can prove instances of both Conjectures~\ref{conj:real-rootedness-chow} and \ref{conj:real-rootedness-chain} using essentially the same framework. In fact, before carrying out the proof of Theorem~\ref{thm:main1-intro} in Section~\ref{sec:chow-RR}, we present the proof of Theorem~\ref{thm:main2-intro} in Section~\ref{sec:h-RR}. The reason is that the ideas used to prove the latter are instrumental to prove the former, which is more technical.

Specializing Theorem~\ref{thm:main2-intro} to the matroids listed below, in Theorem~\ref{thm:main-matroid-specializations}, yields the real-rootedness of the $h$-polynomial of the Bergman complex of several families of matroids. This retrieves recent results appearing in the literature, especially in the work of Athanasiadis and Kalampogia-Evangelinou \cite{athanasiadis-kalampogia} and Br\"and\'en and Saud-Maia-Leite \cite{branden-saud1,branden-saud2} (see below for a historical remark).

\subsection{Rank selection, truncations, and duality}

If $n$ is a positive integer, we denote $[n] := \{1,\ldots,n\}$. For a graded bounded poset $\P$ of rank $r$ and a subset $S\subseteq [r-1]$ the \emph{rank-selected} poset $\P_S$ is the subposet of $\P$ induced on the subset $\{x\in \P: \rk(x)\in S\cup\{0,r\}\}$. In the special case in which $S = [r-2]$, the poset $\P_S$ is customarily called the \emph{truncation} of $\P$, and we denote it by $\trunc(\P)$. When $\P$ is the lattice of flats of a matroid $\M$, the truncation of $\P$ agrees with the lattice of flats of the matroid truncation of $\M$. 

In Proposition~\ref{prop:truncation} we show that truncations preserve the property of being a UMEL-shellable poset. In the framework of Chow polynomials this is especially interesting because of the tight connection between truncations and poset duality. Let $\P^{\ast}$ be the dual poset of $\P$, obtained by reversing the partial order of $\P$. Then we have 
\[ \uH_{\P^*}(x) = \H_{\trunc(\P)}(x) \qquad \text{ and } \qquad \H_{\P^*}(x) = \H_{\P}(x). \]
These identities were one of the main results in \cite[Theorem~1.2 and Corollary~4.2]{branden-vecchi1}, and allow us to derive many results about duals of UMEL-shellable posets. Nonetheless we give simple and self-contained proofs in this paper (see Lemma~\ref{lemma:duality}).

It is natural to ask whether all rank selections preserve the property of being UMEL-shellable. While the question remains open (see Question~\ref{question:rank-selected} below), we nevertheless show that Theorem~\ref{thm:main1-intro} and Theorem~\ref{thm:main2-intro} are true for any rank selection of a UMEL-shellable poset. The following is among the most general statements we prove in the present article.

\begin{thm}\label{thm:main3-intro}
    Let $\P$ be UMEL-shellable poset and let $\P_S$ be a rank-selected subposet of $\P$. Then the following holds:
    \begin{enumerate}[\normalfont (i)]
        \item The Chow polynomials $\uH_{\P_S}(x)$ and $\uH_{\P_S^*}(x)$ have nonpositive real roots.
        \item The augmented Chow polynomial $\H_{\P_S}(x)$ has nonpositive real roots.
        \item The roots of the polynomials $\uH_{\P_S}(x)$ and $\H_{\P_S}(x)$ interlace.
        \item The $h$-polynomial of the order complex $\Delta(\P_S)$ has nonpositive real roots.
    \end{enumerate}
\end{thm}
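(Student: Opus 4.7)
The plan is to prove \Cref{thm:main3-intro} by induction on the rank $r$ of $\P$, reducing to \Cref{thm:main1-intro,thm:main-interlacing-intro,thm:main2-intro} via the duality identities of \Cref{lemma:duality} and the truncation stability of \Cref{prop:truncation}. All four parts will be handled simultaneously in the inductive step; the base case $r\leq 1$ is trivial.

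First I reduce part (i) to part (ii). Applying the identities $\uH_{\Q^*}(x)=\H_{\trunc(\Q)}(x)$ and $\H_{\Q^*}(x)=\H_{\Q}(x)$ to $\Q=\P_S$, and using $\trunc(\P_S)=\P_{S\setminus\{\max S\}}$ together with $\trunc(\P_S^*)=(\P_{S\setminus\{\min S\}})^*$, one obtains
\[
\uH_{\P_S}(x)=\H_{\P_{S\setminus\{\min S\}}}(x) \qquad\text{and}\qquad \uH_{\P_S^*}(x)=\H_{\P_{S\setminus\{\max S\}}}(x).
\]
Thus part (i) follows once part (ii) is established for every rank selection of $\P$.

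For part (ii), I would run an inner induction on $|T|$ where $T=[r-1]\setminus S$. The base case $T=\emptyset$ is \Cref{thm:main1-intro}. If $r-1\in T$, then $\P_S=(\trunc\P)_S$ with $\trunc\P$ UMEL-shellable of rank $r-1$ by \Cref{prop:truncation}, so the outer induction on rank gives the result. The delicate case is $r-1\in S$: my plan is to set up an atom recurrence in which $\H_{\P_S}(x)$ is expressed as a polynomial combination of augmented Chow polynomials $\H_{[a,\widehat{1}]_{S'}}(x)$, as $a$ ranges over the atoms of $\P$ and $S'$ is the rank selection induced on the UMEL-shellable upper interval $[a,\widehat{1}]$ of rank $r-1$. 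By the outer induction each $\H_{[a,\widehat{1}]_{S'}}(x)$ has only nonpositive real roots, and the monotonicity condition in UMEL-shellability ensures that the collection $\{\H_{[a,\widehat 1]_{S'}}(x)\}_a$ forms an interlacing family, so the standard interlacing-families machinery delivers real-rootedness of $\H_{\P_S}(x)$.

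Parts (iii) and (iv) would follow from the same framework. The interlacing of $\uH_{\P_S}$ and $\H_{\P_S}$ in part (iii) appears as a byproduct of the interlacing-family argument used in part (ii). For part (iv), the $h$-polynomial $h(\Delta(\P_S);y)$ admits an analogous atom recurrence rooted in the shellability of $\Delta(\P_S)$---a property inherited from the EL-shellability of $\P$ under rank selection---and the outer induction applied to upper intervals, combined with the resulting interlacing family, produces real-rootedness. The main obstacle I anticipate is the verification that the atom recurrences in the rank-selected case genuinely yield interlacing families: the UMEL hypothesis on $\P$ must be transported through restriction to $[a,\widehat{1}]_{S'}$, and it is precisely the monotonicity condition built into the definition of UMEL-shellable that is engineered to make this transport succeed.
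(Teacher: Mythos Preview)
Your reduction of part (i) to part (ii) via the duality identities is correct and is also how the paper handles $\uH_{\P_S^*}$. However, the heart of your argument for part (ii)---the ``atom recurrence''---has a genuine gap.

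The family $\{\H_{[a,\widehat{1}]_{S'}}(x)\}_a$ you propose is degenerate: by rank-uniformity of the UMEL-shellable poset (see \Cref{lem:el-rank-upper-implies-rank-upper} and the recursion in the proof of \Cref{lem:induction}), the polynomials $\gamma^{S,k}_i$, and in particular the augmented Chow polynomials of upper intervals, depend only on the corank, not on the particular atom $a$. So all members of your family coincide, and a nonnegative combination of copies of a single polynomial yields nothing beyond that polynomial itself---certainly not $\H_{\P_S}(x)$, whose degree is strictly larger. There is also no formula expressing $\H_{\P_S}(x)$ as a polynomial combination of the $\H_{[a,\widehat{1}]_{S'}}(x)$; the recursive definitions of $\uH$ and $\H$ go through reduced characteristic polynomials and sums over \emph{all} elements, not over atoms.

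What the paper does instead is work at the level of the $\gamma$-polynomials and refine not by atoms but by the \emph{index $i$ of the first edge label} (many atoms can share the same label). Crucially, one must further split each $\gamma^{S,k}_i$ into an ascent piece $\gamma^{S,k}_{i,\asce}$ and a descent piece $\gamma^{S,k}_{i,\desc}$, because whether the first step contributes a descent depends on the second step and cannot be cleanly separated otherwise. The interlacing statement is then for the full three-row diagram $(\gamma^{S,k}_{i,\asce},\gamma^{S,k}_i,\gamma^{S,k}_{i,\desc})_i$, and the recursion (distinguishing the cases $1\in S$ and $1\notin S$) produces transition matrices to which \Cref{thm:interlacing} applies. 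The monotonicity in UMEL is used to verify those matrix hypotheses, not to make an atom-indexed family interlacing. For part (iv) the same label-index refinement works, with a simpler single-tier recursion as in the proof of \Cref{thm:main2-intro}. Your outer/inner inductive scaffolding could be made to work, but only after replacing the atom recurrence with this label-index, ascent/descent refinement.
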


Some ideas employed to prove this result are modifications and generalizations of tools used by Athanasiadis, Douvropoulos, and Kalampogia-Evangelinou \cite{athanasiadis-douvropoulos-kalampogia}.

\subsection{Supersolvable lattices}
The class of supersolvable lattices, introduced by Stanley \cite{stanley1972supersolvable}, serves as a far-reaching generalization of both lattices of subgroups of supersolvable finite groups, as well as intersection lattices of fiber-type arrangements (e.g. braid arrangements of type A and B, see \cite{OT_1992}), among many other examples. In this article we will restrict our attention to supersolvable lattices that are also assumed to be semimodular. A semimodular supersolvable lattice can be simply defined as a semimodular lattice containing a maximal chain of modular elements. From this definition one can see that the class of supersolvable lattices encompasses the class of modular lattices and thus in particular all distributive lattices. A classical result of Björner \cite{bjorner-el-shellable} establishes that any choice of a maximal chain of modular elements in a lattice induces an explicit EL-labeling on that lattice, and thus semimodular supersolvable lattices are EL-shellable. 

Rank-uniform semimodular supersolvable lattices were studied by Damiani, D'Antona, and Regonatti~\cite{Damiani1994}. This class of posets contains remarkable families of geometric lattices, prominent examples being all braid matroids of type A and type B, and more generally all Dowling geometries \cite{dowling1973class,dowling1973q}. Our main result concerning rank-uniform semimodular supersolvable lattices is the following. 

\begin{thm}\label{thm:main4-intro}
    Every rank-uniform semimodular supersolvable lattice is UMEL-shellable.
\end{thm}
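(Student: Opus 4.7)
The plan is to construct a UMEL-labeling of a rank-uniform supersolvable lattice $L$ directly from the data of a maximal chain of modular elements. Since rank-uniformity (the letter U) is part of the hypothesis and therefore requires no verification, and since EL-shellability of any supersolvable lattice is already known by the classical theorem of McNamara~\cite{mcnamara2003labelings}, the only substantive content is to exhibit a choice of EL-labeling that additionally satisfies the monotonicity condition M.

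Concretely, I would fix a modular maximal chain $\hat{0} = x_0 \lessdot x_1 \lessdot \cdots \lessdot x_r = \hat{1}$ in $L$ and consider McNamara's edge-labeling $\lambda$, defined on a cover $y \lessdot z$ as the smallest index $j$ such that $y \vee x_j \geq z$. I would first recall the standard features of this labeling: each maximal chain of $L$ carries a word that is a permutation of $[r]$, every closed interval admits a unique strictly increasing maximal chain, and the label of a cover depends only on its relative position with respect to the modular chain. These features interact very well with rank-uniformity, since the latter forces the upper intervals $[s, \hat{1}]$ and $[t, \hat{1}]$ at two elements of equal rank to have the same multiset of chain-label data, and this is precisely the regularity needed to compare labels across such intervals.

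The main step, and the expected source of the difficulty, is verifying condition M. The plan here is to leverage two classical facts about modular elements: first, if $x_j$ is modular and $y \in L$ is arbitrary, then the covers above $y$ are controlled by the covers above $y \vee x_{j-1}$ in the upper interval at $x_{j-1}$ via the map $(\cdot) \vee x_{j-1}$; second, McNamara's label of a cover $y \lessdot z$ is determined exactly by the minimal index $j$ for which $y \vee x_j$ dominates $z$. Combining these two observations with rank-uniformity, one can show that as one varies $s,t$ of the same rank the possible labels on covers above them match in the ordered sense required by M. This is the only place in the argument where all three ingredients---rank-uniformity, modularity of the chain, and the specific choice of the McNamara labeling---must interact simultaneously, and it will require careful bookkeeping of joins with the $x_j$ together with the Whitney data of upper intervals. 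Once M is established, combining it with U (by hypothesis) and EL (by McNamara) yields the UMEL property and completes the proof.
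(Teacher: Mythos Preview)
Your proposal contains a genuine conceptual gap: you conflate two distinct notions of ``rank-uniform'' used in the paper. The hypothesis is that the lattice is rank-uniform \emph{as a poset}, meaning $W_k(s)=W_k(t)$ whenever $\rk(s)=\rk(t)$. The letter ``U'' in UMEL, however, refers to $(\Latt,\lambda)$ being a rank-uniform \emph{labeled} poset, which requires two labeling-dependent conditions: uniform width ($\omega_s(i)=\omega_{s'}(i)$ for $\rk(s)=\rk(s')$) and uniform descent ($\des_s(t)=\des_{s'}(t')$ when the indices match). Lemma~\ref{lem:el-rank-upper-implies-rank-upper} shows only the implication in one direction; the converse is not automatic. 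So your claim that ``rank-uniformity is part of the hypothesis and therefore requires no verification'' is wrong, and in fact the bulk of the paper's proof is devoted to establishing uniform width for the supersolvable labeling $\lambda_m$.

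Concretely, the paper argues by induction on the rank, using the diamond isomorphism (Lemma~\ref{lemmaisodiamond}) to identify $[s,\un]\simeq[\zero,m_{n-1}]$ for atoms $s\not\leqslant m_{n-1}$, and then invoking poset rank-uniformity to pin down the single remaining width value $\omega_s(n-1)$ for atoms $s\leqslant m_{n-1}$. Uniform descent and monotonicity are handled together via the exact formula $\des(s,t)=\Ind(s,t)-1$ (Lemmas~\ref{lem:descentSS} and~\ref{lem:samelabelSS}); this single computation is what makes both conditions fall out simultaneously. Your sketch for M gestures at the right modular-element machinery, but without isolating this formula (or an equivalent), and without separately verifying uniform width, the argument is incomplete.
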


The above is a central step towards deducing the specializations to matroids discussed below.

\subsection{Specializations to matroids}

As we show in this paper, many famous classes of matroids have lattices of flats that are UMEL-shellable, and are thus covered by our framework. The following is a specialization of Theorems~\ref{thm:main1-intro},~\ref{thm:main2-intro},~and~\ref{thm:main3-intro} to matroids.

\begin{thm}\label{thm:main-matroid-specializations}
    The following classes of matroids have real-rooted Chow polynomials, augmented Chow polynomials, and $h$-polynomials of their Bergman complexes.
    \begin{itemize}[leftmargin=23pt,itemsep=-1pt]
        \item Uniform matroids.
        \item Projective geometries and $q$-niform matroids.
        \item Affine geometries and their rank-selected subposets.
        \item Braid matroids of type A and their rank-selected subposets.
        \item Braid matroids of type B and their rank-selected subposets.
        \item Dowling geometries and their rank-selected subposets (generalizations of the preceding two families).
        \item Supersolvable rank-uniform geometric lattices and their rank-selected subposets (generalizations of the preceding families).
    \end{itemize}
    In particular, Conjectures~\ref{conj:real-rootedness-chow} and \ref{conj:real-rootedness-chain} hold for these matroids.
\end{thm}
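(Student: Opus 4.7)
The plan is to reduce every class listed in the statement to the last bullet point, namely \emph{supersolvable rank-uniform geometric lattices and their truncations}. Once this reduction is achieved, the conclusion follows by combining Theorem~\ref{thm:main4-intro} (which asserts that such lattices are UMEL-shellable) with Theorem~\ref{thm:main1-intro}, Theorem~\ref{thm:main2-intro}, and Theorem~\ref{thm:main3-intro}; the last of these guarantees that rank-selected subposets, and in particular truncations, retain real-rootedness of the Chow, augmented Chow, and chain polynomials. So the entire specialization theorem becomes a bookkeeping corollary once the membership of each family in the UMEL-shellable framework is established.

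First I would verify, class by class, that each family on the list is either a supersolvable rank-uniform geometric lattice or the truncation of one. Braid matroids of types A and B, and more generally all Dowling geometries, are classical fiber-type arrangements and are well known to be supersolvable; their lattices of flats are rank-uniform because of the transitive action of the symmetric group, the hyperoctahedral group, or a wreath-product respectively, on flats of each rank. Projective geometries over $\mathbb{F}_q$ and affine geometries over $\mathbb{F}_q$ are standard examples of supersolvable geometric lattices (using, respectively, a flag of coordinate subspaces and its affine analogue), and rank-uniformity is immediate from the transitive action of the corresponding linear group. The $q$-niform matroids are defined as truncations of projective geometries, and the uniform matroids $U_{k,n}$ are the truncations of the Boolean lattice $B_n$, which is distributive, hence supersolvable, and manifestly rank-uniform.

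With this reduction in place, I would then invoke Theorem~\ref{thm:main4-intro} to conclude that each listed rank-uniform supersolvable geometric lattice is UMEL-shellable. Applying Theorem~\ref{thm:main1-intro} yields real-rootedness of the Chow and augmented Chow polynomials, and Theorem~\ref{thm:main2-intro} yields real-rootedness of the $h$-polynomial of the order complex, which for a geometric lattice is the Bergman complex of the matroid. For the ``and their truncations'' clauses, I would either appeal to Proposition~\ref{prop:truncation} (the statement that truncation preserves UMEL-shellability) to reduce directly to the preceding theorems, or, more uniformly, apply Theorem~\ref{thm:main3-intro} with the rank selection $S=[r-2]$ corresponding to truncation. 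Both routes deliver the three desired real-rootedness properties simultaneously and also settle Conjectures~\ref{conj:real-rootedness-chow} and \ref{conj:real-rootedness-chain} for these matroids.

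The main obstacle I anticipate is not conceptual but expository: assembling clean and uniform references for the classical rank-uniformity and supersolvability of each individual family, since these facts are folklore but scattered across Stanley's original work on supersolvable lattices, Dowling's papers introducing the $q$-analog of partition lattices, and the Damiani--D'Antona--Regonati classification of rank-uniform supersolvable geometric lattices. The only genuinely substantive point is checking that all classical examples on the list fall within the Damiani--D'Antona--Regonati framework (so that rank-uniformity actually holds alongside supersolvability), but this is a direct verification for each of the named families.
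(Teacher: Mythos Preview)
Your overall strategy is correct and matches the paper's: treat Theorem~\ref{thm:main-matroid-specializations} as a specialization of Theorems~\ref{thm:main1-intro}, \ref{thm:main2-intro}, and \ref{thm:main3-intro}, with Theorem~\ref{thm:main4-intro} supplying UMEL-shellability for the supersolvable rank-uniform cases and Proposition~\ref{prop:truncation} (or Theorem~\ref{thm:main3-intro}) handling truncations. The paper also covers uniform matroids via the explicit labeling in Example~\ref{ex:uniform-matroid} rather than as truncations of the Boolean lattice, but your route through $\Tr^{n-k}(B_n)$ works equally well.

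There is, however, a concrete error in your reduction: affine geometries are \emph{not} supersolvable in general. In $\mathrm{AG}(n,q)$ with $n\geqslant 2$, no hyperplane is a modular element, because parallel flats break the modular rank equality. For instance, in $\mathrm{AG}(2,3)$ two parallel lines $L,L'$ satisfy $\rk(L)+\rk(L')=2+2=4$ while $\rk(L\vee L')+\rk(L\wedge L')=3+0=3$. Hence there is no maximal chain of modular elements and $\mathrm{AG}(n,q)$ is not supersolvable for $n\geqslant 2$. The paper's own phrasing reflects this: the final bullet is said to generalize only ``the preceding three families'' (braid type~A, braid type~B, Dowling), not the affine geometries, which are listed separately and not subsumed under the supersolvable umbrella. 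So your plan of reducing every item to the last bullet fails precisely here.

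To repair the argument for affine geometries you need a direct verification that $\mathcal{L}(\mathrm{AG}(n,q))$ is UMEL-shellable, exploiting rank-uniformity (every upper interval $[p,\widehat{1}]$ is isomorphic to $\mathrm{PG}(n-1,q)$) together with an explicit compatible EL-labeling, in the spirit of Example~\ref{ex:uniform-matroid}, rather than invoking supersolvability.
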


As explained earlier, a motivating feature of the class of posets addressed in the present article is that it enables a simultaneous approach for both Conjecture~\ref{conj:real-rootedness-chow} and Conjecture~\ref{conj:real-rootedness-chain}. It is worth mentioning at this point the work by Br\"and\'en and Saud-Maia-Leite \cite{branden-saud1,branden-saud2} and by Br\"and\'en and Vecchi \cite{branden-vecchi2} (the latter written concurrently and independently of the present manuscript). The main results in these works establish, respectively, the real-rootedness of the chain polynomial and the Chow polynomial of certain posets called \emph{TN-posets}. A TN-poset is a \emph{lower rank-uniform} poset; instead of requiring that \emph{upper} intervals of the same rank have the same Whitney numbers of the second kind, they require it for the \emph{lower} intervals. The acronym TN stands for totally nonnegative: it is required that a certain matrix constructed from these lower rank-uniform posets is totally nonnegative. The classes of TN-posets and UMEL-shellable posets appear to share many interesting features (after passing to the dual). It is unclear, however, how these two classes compare. In the special case of geometric lattices, we conjecture (see Conjecture~\ref{conj:tn-geom-lattices-are-umel}) that all TN-geometric lattices are in fact UMEL-shellable, whereas the converse is easily seen to be false (partition lattices are UMEL-shellable but not TN-posets). 

Some of the classes of matroids listed in the preceding theorem were approached in the literature recently by different sets of authors.

\begin{itemize}
[leftmargin=18pt,itemsep=-1pt]
    \item Uniform matroids were proved to have real-rooted augmented Chow polynomials in \cite[Theorem~1.10]{ferroni-matherne-stevens-vecchi}. The real-rootedness of the Chow polynomial of uniform matroids is the main result of Br\"and\'en and Vecchi in \cite{branden-vecchi1}; subsequently a different proof was provided by Hoster and Stump in \cite{hoster-stump}. The real-rootedness of the $h$-polynomial of the Bergman complex of a uniform matroid follows from results of Brenti and Welker \cite{brenti-welker}.
    \item Projective geometries and their truncations (also known as $q$-niform matroids \cite{proudfoot-qniform} or subspace lattices) were proved to have a real-rooted chain polynomial in work of Athanasiadis and Kalampogia-Evangelinou \cite[Proposition~3.1]{athanasiadis-kalampogia}. 
    \item Projective and affine geometries were proved to have real-rooted Chow and augmented Chow polynomials by Br\"and\'en--Vecchi \cite[Section~6]{branden-vecchi2} and real-rooted $h$-polynomials of their Bergman complexes by Br\"and\'en and Saud-Maia-Leite \cite[Theorem~3.7]{branden-saud1}.
    \item Braid matroids of type A and B and in fact all Dowling geometries were proved to have real-rooted augmented Chow polynomials by Br\"and\'en and Vecchi in \cite{branden-vecchi2}. The real-rootedness of the $h$-polynomial of the Bergman complex for braid matroids of type A and B was proved by Athanasiadis and Kalampogia-Evangelinou in \cite[Section~3]{athanasiadis-kalampogia}; an alternative proof for these two classes of matroids and Dowling lattices was provided by Br\"and\'en and Saud-Maia-Leite \cite[Theorem~4.1]{branden-saud1}. 
\end{itemize}

The real-rootedness of the Chow polynomial and the augmented Chow polynomial for type A and type B braid matroids (and more generally, for Dowling geometries) was the original new result that motivated the discovery of UMEL-shellable posets\footnote{After we communicated our discovery, Br\"and\'en--Vecchi noticed an alternative proof of this fact, which is now contained in \cite[Section~6]{branden-vecchi2}).}. The Chow and the augmented Chow polynomial of the type A braid matroid have been in the center of intensive study in recent years, see \cite{gaiffi-serventi,ferroni-matherne-stevens-vecchi,ferroni-matherne-vecchi,liao,stump,eur-ferroni-matherne-pagaria-vecchi,kannan-kuhne}.

\subsection{Structure of the paper} 
In \Cref{sec:definitions}, we recapitulate the main notions on EL-shellable posets, face and flag enumeration on posets, as well as expressions of the polynomials in question using these enumerations. We also include properties of interlacing polynomials, a key tool to prove real-rootedness of single variate polynomials. In \Cref{sec:rank-uniform-labeled-posets}, we present and develop the framework of UMEL-shellable posets, along with technical properties that follow from the definitions. In \Cref{sec:h-RR} and \Cref{sec:chow-RR}, we introduce refinements of $h$-polynomials of order complexes and $\gamma$-polynomials of Chow polynomials into interlacing sequences of polynomials, using the recursive structures given by face and flag enumerations. These two sections contain the proofs of the main real-rootedness results, Theorem~\ref{thm:main1-intro} and Theorem~\ref{thm:main2-intro}. In \Cref{sec:supersolvable}, we show that rank-uniform supersolvable lattices are UMEL-shellable, which enables us to conclude real-rootedness results for those posets. In combination with the results on rank selection, (mainly Theorem~\ref{thm:main3-intro}), presented in \Cref{sec:rank-selection}, we establish the real-rootedness of the foregoing families of polynomials for the matroids listed in Theorem~\ref{thm:main-matroid-specializations}. We also expand upon Theorem~\ref{thm:main1-intro} to \Cref{thm:gamma-truncation-interlacing} by proving an interlacing diagram that illuminates the relationship between the truncation, the dual poset, and the original poset.  We finish the article with a list of open problems and conjectures in Section~\ref{sec:open-questions}.
\subsection*{Acknowledgments} 

We thank the organizers and participants of the special year on Algebraic and Geometric Combinatorics at the Institute for Advanced Study, during which this collaboration was initiated. We benefited from helpful conversations with Spencer Backman, Petter Br\"{a}nd\'{e}n, Mark Goresky, Matt Larson, Jonathan Leake, Jacob Matherne, Karola M\'{e}sz\'{a}ros, Shayan Oveis Gharan, David E Speyer, and Lorenzo Vecchi. We are grateful to Christos Athanasiadis and Petter Br\"{a}nd\'{e}n for suggestions that improved the manuscript. During the early stages of this project, LF was a Minerva Research Fellow at the IAS, and SL was supported by the NSF Grant DMS-1926686 at the IAS. 

\section{Preliminaries}
\label{sec:definitions}
\subsection{Posets and edge labelings}
Let $(\P, \leqslant)$ be a finite graded bounded poset with rank function $\rk \colon \P \to \mathbb{Z}_{\geqslant 0}$. We denote by $\un$ the upper bound of $\P$ and by $\zero$ its lower bound. We denote by $\At(\P)$ the set of elements of rank $1$, called the atoms of $\P$. For all $s\leqslant t \in \P$, the interval between $s$ and $t$ is denoted by 
\[
[s, t] \coloneqq \{ u \in \P \mid s \leqslant u \leqslant t\}. 
\]
If $s < t \in \P$ are such that no $u \in \P$ satisfies $s < u < t$, we say that $s$ is covered by $t$, or $t$ covers $s$, and we write $s \lessdot t$. We denote by $\Edge(\P)$ the set of pairs $(s, t) \in \P \times \P$ such that $s \lessdot t.$ An edge labeling of $\P$ is a map $\lambda: \Edge(\P) \rightarrow \Lambda$, with $\Lambda$ a totally ordered set with total order denoted $\vartriangleleft$. An edge labeling of $\P$ allows us to associate to every saturated chain $C$ of the form 
\[
s = u_0 \lessdot u_1 \lessdot \cdots \lessdot u_k = t 
\] between two elements $s \leqslant t$ the list of labels $\lambda(C) \coloneqq \lambda(u_0, u_1)\ldots \lambda(u_{k-1}, u_k).$

Bj\"{o}rner and Wachs introduced the notion of edge-lexicographical labeling on a finite graded bounded poset $\P$ in \cite{bjorner-el-shellable, bjorner1983lexicographically}. 
\begin{definition}
An edge labeling $\lambda: \Edge(\P) \rightarrow \Lambda$ of $\P$ is called \textit{edge-lexicographical} (EL), if for all $s \leqslant t$ in $\P$, there exists a unique maximal chain $C$
\[
s = u_0 \lessdot u_1 \lessdot \cdots \lessdot u_{k} = t
\]
between $s$ and $t$ with weakly increasing labels as the rank increases: 
\[
\lambda(u_0, u_1) \trianglelefteqslant \lambda(u_1, u_2) \trianglelefteqslant \cdots \trianglelefteqslant \lambda(u_{k-2}, u_{k-1}) \trianglelefteqslant \lambda(u_{k-1}, u_k),
\] and the maximal chain $C$ is lexicographically minimal among all maximal chains from $s$ to $t$. \footnote{For the definition of EL-labelings, different conventions exist in the literature: we can require the unique maximal chain to have either weakly or strictly increasing labels. One must then adapt the definition of descent (Subsection \ref{subsec:flag-enum}) accordingly.  
}
\end{definition}

An \emph{EL-labeled poset} $(\P,\lambda)$ is a pair consisting of a poset $\P$ and an EL-labeling 
\[
\lambda:\Edge(\P)\rightarrow \Lambda.
\] For all $s \leqslant t$ in $\P$ with EL-labeling $\lambda$, the interval $[s,t]$ has a natural EL-labeling $\lambda_{s, t}$ given by the restriction of $\lambda$ to the covering relations in $[s,t]$.

It is well-known that the order complex $\Delta(\P)$ of an EL-labeled poset $(\P, \lambda)$ is shellable, with shelling order on the maximal faces given by the lexicographic order induced by $\lambda$ on the maximal chains of $\P$. For general background on posets and topology, we refer to \cite{wachs2007poset}. 

\subsection{Face enumeration on simplicial complexes}

Let $\Delta$ be a finite simplicial complex of dimension $d-1$. For each integer $i\in \{0,1,\ldots,d\}$ let us denote by $f_i(\Delta)$ the number of simplices in $\Delta$ of cardinality $i$ (that is, of dimension $i-1$). 
The $f$-vector of $\Delta$ is the $(d+1)$-tuple $(f_0(\Delta), f_1(\Delta), \dots, f_{d}(\Delta))$. The $h$-vector of $\Delta$, denoted $(h_0(\Delta),\ldots,h_{d}(\Delta))$, is defined by the following equation
\[
  \sum_{i=0}^d f_{i}(\Delta)(y-1)^{d-i}
  \;=\;
  \sum_{i=0}^d h_i(\Delta) y^{d-i}.
\]
The $f$-polynomial $f(\Delta; x)$ and the $h$-polynomial $h(\Delta; y)$ are defined as 
\[
  f(\Delta; x) \coloneqq \sum_{i=0}^d f_{i} \: x^i,
  \quad \text{ and } \quad
  h(\Delta; y) \coloneqq \sum_{i=0}^d h_i \: y^i.
\]
By definition, the relation between the $f$-polynomial and the $h$-polynomial can be written as follows: 
\[
  h(\Delta; y) \;=\; (1-y)^d \, f\left(\frac{y}{1-y}\right).
\]

\subsection{Flag enumeration on posets}\label{subsec:flag-enum}

In this section, we recall the notion of flag $f$-vectors of posets. For general background on flag vectors and further information on the topics covered in this section, we refer to \cite[Chapter~3]{stanley-ec1}.

Let $\P$ be a finite graded poset with rank function $\rk$, and assume that $\P$ has rank $n$. For a subset $S=\{r_1<\cdots<r_m\}\subseteq \{0,\dots,n\}$, the 
\textit{flag $f$-vector} of $\P$ is the map $\alpha_{\P}:2^{\{0,\ldots,n\}}\to \mathbb{Z}_{\geqslant 0}$ defined by:
\[
\alpha_{\P}(S) \coloneqq 
\abs{\:\left\{s_1 < \cdots < s_m \in \P : \rk(s_i)=r_i \text{ for } 1\leqslant i \leqslant m \,\right\}\:}.
\]
The \textit{flag $h$-vector} of $\P$ is the map $\beta_{\P} \colon 2^{\{0,\dots,n\}}\to \mathbb{Z}$ defined by any of the two equivalent conditions
\[\beta_{\P}(S) \coloneqq \sum_{T\subseteq S} (-1)^{|S|-|T|}\,\alpha_{\P}(T), \qquad \text{or} \qquad
\alpha_{\P}(S) = \sum_{T \subseteq S} \beta_{\P}(T).
\]
If $\P$ has $\widehat{0}$ and $\widehat{1}$, one has:
\[\begin{aligned}
\alpha_{\P}(S) &= \alpha_{\P}(S \cap [n-1]) 
    && \text{for all $S$; and}\\
\beta_{\P}(S)  &= 0 
    && \text{whenever $S \not\subseteq [n-1]$ (i.e., $0\in S$ or $n\in S$).}
\end{aligned}\]
If $\Delta = \Delta(\P)$ is the order complex of a finite graded poset $\P$, then the vectors $\alpha_{\P}$ and $\beta_{\P}$ can be used to enumerate the faces of $\Delta$ according to dimension:
\[
    f_i(\Delta) = \sum_{\abs{S}=i} \alpha_{\P}(S)
    \qquad \text{and} \qquad
    h_i(\Delta) = \sum_{\abs{S} = i} \beta_{\P}(S).
\]
Therefore, the $f$-polynomial and the $h$-polynomial of $\P$ can be written as  
\begin{equation}\label{eq:h-from-flag-h}
f(\Delta; x) = \sum_{i} \left(\sum_{\abs{S} = i} \alpha_{\P}(S) \right) x^i \quad \text{ and } \quad h(\Delta; y) = \sum_{i} \left( \sum_{ \abs{S} = i} \beta_{\P}(S) \right) y^i. 
\end{equation}

If the poset $\P$ admits an EL-labeling $\lambda: \Edge(\P)\rightarrow \Lambda,$ by a result of Stanley appearing in \cite[Theorem~2.7]{bjorner-el-shellable} it is possible to interpret the entries of the flag $h$-vector of $\P$ in terms of statistics of $\lambda$, as we now explain. A sequence of labels 
\[
S = (\lambda_1, \ldots, \lambda_n) \in \Lambda
\] has a \emph{descent} at $i$ for some $i \in [n-1]$, if $\lambda_{i} \vartriangleright \lambda_{i+1}$. The sequence has descent set $\Des(S)$ and descent count $\des(S)$:  
\[
\Des(S) \coloneqq \{i \in [n-1] \mid  S \textrm{ has a descent at }i\}, \quad \text{ and } \quad \des(S) \coloneqq \abs{\Des(S)}. 
\]

\begin{lemma}
\label[lemma]{lem:beta-counts-des}
    Let $\P$ be a finite graded bounded poset of rank $n$ which admits an EL-labeling $\lambda$. For every subset $S \subseteq [n-1]$, the flag $h$-vector entry $\beta_{\P}(S)$ equals the number of maximal chains with descent set $S$ with respect to $\lambda$.  
\end{lemma}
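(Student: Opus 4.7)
The plan is to establish the identity via a bijective argument combined with M\"obius inversion. The crucial structural input is the EL-labeling hypothesis, which provides a canonical way of filling in a chain with prescribed rank support $T$ to obtain a full maximal chain whose descent set is constrained to lie inside $T$.

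The first step is to prove the auxiliary identity
\[
\alpha_{\P}(T) \;=\; \abs{\{M \text{ maximal chain of } \P : \Des(M) \subseteq T\}}
\]
for every $T \subseteq \{1,\ldots,n-1\}$. To set up a bijection, given a chain $\zero = s_0 < s_1 < \cdots < s_m < s_{m+1} = \un$ whose interior ranks form the set $T$, I fill each subinterval $[s_i, s_{i+1}]$ with the unique weakly increasing saturated chain guaranteed by the EL-labeling property, producing a maximal chain $M$ of $\P$. Since the labels within each filled subinterval are weakly increasing, the only positions at which a descent of $M$ can occur are the boundary ranks in $T$, so $\Des(M) \subseteq T$. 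Conversely, restricting a maximal chain $M$ with $\Des(M) \subseteq T$ to the ranks in $T \cup \{0, n\}$ yields a chain with rank support $T$; because $M$ has no descents at positions outside $T$, its restriction to each subinterval $[s_i, s_{i+1}]$ is weakly increasing, and uniqueness in the EL-labeling definition forces this restriction to agree with the canonical chain used in the thickening. Hence thickening and thinning are mutually inverse, proving the identity.

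The second step applies the defining inclusion--exclusion for $\beta_{\P}$:
\[
\beta_{\P}(S) \;=\; \sum_{T \subseteq S} (-1)^{|S|-|T|}\, \alpha_{\P}(T) \;=\; \sum_{M} \;\sum_{T \,:\, \Des(M) \subseteq T \subseteq S} (-1)^{|S|-|T|},
\]
where the outer sum runs over maximal chains $M$ of $\P$. The inner sum vanishes whenever $\Des(M) \neq S$: if $\Des(M) \not\subseteq S$ the range is empty, while if $\Des(M) \subsetneq S$ we set $k = |S \setminus \Des(M)| > 0$ and the sum reduces to $(1-1)^k = 0$. When $\Des(M) = S$ the inner sum equals $1$, yielding $\beta_{\P}(S) = \abs{\{M : \Des(M) = S\}}$, as desired.

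The main technical point is the bijection in the first step: both its existence and its well-definedness rely simultaneously on the two faces of the EL-labeling condition in every subinterval---the existence of a weakly increasing maximal chain and its uniqueness as the lexicographically minimal one. Once this step is in place, the remainder of the argument is a routine M\"obius inversion on the Boolean lattice of subsets of $S$.
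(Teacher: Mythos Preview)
Your proof is correct and is essentially the classical argument. The paper does not give its own proof of this lemma; it simply cites Stanley (\cite[Theorem~3.14.2]{stanley-ec1}) and Bj\"orner (\cite[Theorem~2.7]{bjorner-el-shellable}), noting that the result holds in the more general context of R-labelings. Your bijection-plus-inclusion--exclusion argument is exactly the standard proof found in those references.

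One small remark on your final paragraph: the proof actually uses only the existence and uniqueness of a weakly increasing maximal chain in every interval (the R-labeling condition), not the lexicographic-minimality clause of the EL definition. Your phrase ``its uniqueness as the lexicographically minimal one'' conflates two separate parts of the EL axiom; the lex-minimality plays no role here, which is precisely why the paper remarks that the result holds already for R-labelings.
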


The above result (proved in the more general context of R-labelings) can also be found in \cite[Theorem~3.14.2]{stanley-ec1}.

\subsection{Chow polynomials via flag enumeration}\label{sec:chow-poly-recap}

The Chow polynomial $\uH_{\M}(x)$ of a matroid $\M$ was defined in \cite{ferroni-matherne-stevens-vecchi} as the Hilbert--Poincar\'e series of an Artinian graded algebra $\uCH(\M)$, called the \emph{Chow ring} of $\M$. The Chow ring of a matroid $\M$ was introduced by Feichtner and Yuzvinksy \cite{feichtner2004chow} as the Chow ring of the smooth and generally noncomplete toric variety associated with the Bergman fan of $\M$. Similarly, the augmented Chow polynomial $\H_\M(x)$ was defined as the Hilbert--Poincar\'e series of another Artinian graded algebra $\CH(\M)$, called the \emph{augmented Chow ring} of $\M$. The augmented Chow ring of a matroid $\M$ is introduced by \cite{braden2020singular} as the Chow ring of the smooth and generally noncomplete toric variety associated with the augmented Bergman fan of $\M$. 

In \cite{ferroni-matherne-vecchi} a more general definition of Chow polynomial for finite bounded graded posets is introduced. Precisely, the Chow polynomial of a poset $\P$ is defined recursively by $\H_{\P}(x) = 1$ whenever $|\P| = 1$, and
    \[ \uH_{\P}(x) = \sum_{\substack{s\in \P\setminus\{\zero\}}} \overline{\chi}_{\widehat{0},s}(x)\, \uH_{s,\widehat{1}}(x)\]
otherwise. Here, $\overline{\chi}_{st}(x)$ stands for the \emph{reduced} characteristic polynomial of an interval $[s,t]\subseteq \P$. The augmented Chow polynomial $\H_{\P}(x)$ is defined, correspondingly, as:
    \[ \H_{\P}(x) = \sum_{s\in \P} x^{\rk(s)}\, \uH_{s,\widehat{1}}(x).\]

A non-obvious property of Chow polynomials and augmented Chow polynomials is that their coefficients always form a nonnegative unimodal palindromic sequence (see \cite[Theorem~1.3]{ferroni-matherne-vecchi}). In the special case of matroids, the nonnegativity is a consequence of the (augmented) Chow polynomial being a Hilbert--Poincar\'e series, whereas the symmetry and the unimodality follow from Poincar\'e duality and the hard Lefschetz theorem proved for (augmented) Chow rings, see \cite{adiprasito2018hodge,BRADEN2022108646}. 

Thanks to the following result by Ferroni, Matherne, and Vecchi, the Chow and the augmented Chow polynomial of a poset can be read off of the flag $h$-vector of stable sets. A set $S \subseteq \Z$ is \emph{stable} if there exists no integer $i$ such that $i \in S$ and $i + 1 \in S$. \footnote{The etymology of ``stable'' is as follows: in a graph, a stable set of vertices is a collection of vertices no two of which are adjacent. In the Hasse diagram of the poset $\mathbb{Z}$ with its natural order, the stable sets are precisely those subsets that contain no pair of consecutive integers.}

\begin{thm}[{\cite[Theorem~4.25]{ferroni-matherne-vecchi}}]
\label[theorem]{thm:gamma-flag-h-vector}
    Let $\P$ be a finite graded bounded poset of rank $n$. Then, 
    \begin{align*} \uH_{\P}(x) &= \sum_{\substack{S\subseteq \{2,\ldots,n-1\}\\S\text{ stable}}} \beta_{\P}(S)\, x^{|S|}\,(1+x)^{n-1-2|S|},\\
    \H_{\P}(x)&=\sum_{\substack{S\subseteq \{1,\ldots,n-1\}\\S\text{ stable}}} \beta_{\P}(S)\, x^{|S|}\,(1+x)^{n-2|S|}.
    \end{align*}
\end{thm}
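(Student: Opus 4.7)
The plan is to establish the first identity for $\uH_{\P}(x)$ and to derive the second from it by the augmentation construction. For the reduction, note that by definition $\H_{\P}(x) = \uH_{\widetilde{\P}}(x)$, where $\widetilde{\P}$ is $\P$ with a new minimum $\hat{0}'$ adjoined below $\zero$. Since $\widetilde{\P}$ has rank $n+1$ and every element of $\P$ of rank $i$ becomes rank $i+1$ in $\widetilde{\P}$, the flag $h$-vectors satisfy $\beta_{\widetilde{\P}}(S') = \beta_{\P}(S'-1)$ for $S'\subseteq \{2,\ldots,n\}$, where $S'-1 = \{i-1:i\in S'\}$. Applying the first identity to $\widetilde{\P}$ and reindexing $S = S'-1 \subseteq \{1,\ldots,n-1\}$ (observing that $S'$ is stable if and only if $S$ is) produces exactly the augmented identity.

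For the first identity, I would iterate the recursive definition of $\uH_{\P}(x)$ until the inner Chow polynomial reduces to $1$, yielding the chain expansion
\[\uH_{\P}(x) \;=\; \sum_{\substack{\zero = s_0 < s_1 < \cdots < s_k = \un \\ k\geqslant 1}} \prod_{i=0}^{k-1}\overline{\chi}_{[s_i,s_{i+1}]}(x).\]
Since $\chi_{[s,t]}(1) = 0$ for $s<t$, the geometric-series identity $(x^m-1)/(x-1) = 1+x+\cdots+x^{m-1}$ lets us write
\[\overline{\chi}_{[s,t]}(x) \;=\; \sum_{u \in [s,t)} \mu(s,u)\,\bigl(1 + x + \cdots + x^{\rk(t)-\rk(u)-1}\bigr).\]
Substituting this into the product and expanding the Möbius function as a signed sum over chains in each $[s,u]$ recasts the right-hand side as a signed sum indexed by pairs consisting of a chain in $\P$ together with a refinement inside each interval, each contributing a monomial in $x$.

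The principal step, and the main obstacle, is to rearrange this signed sum so that only stable subsets $S \subseteq \{2,\ldots,n-1\}$ survive with coefficient $\beta_{\P}(S)$ attached to $x^{|S|}(1+x)^{n-1-2|S|}$. Grouping contributions by the rank set of the refined chain and applying the inversion $\beta_{\P}(S) = \sum_{T\subseteq S}(-1)^{|S\setminus T|}\alpha_{\P}(T)$ should convert $\alpha$-sums into $\beta$-sums; to enforce stability, one must exhibit a sign-reversing involution, or an equivalent telescoping cancellation, that pairs up configurations containing two consecutive ranks $\{i,i+1\}$. The restriction to $\{2,\ldots,n-1\}$ rather than $\{1,\ldots,n-1\}$ should emerge from the absorption of the bottom and top rank contributions into the $(1+x)$ factors. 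An alternative route that bypasses the involution would exploit palindromicity of both sides—the right-hand side palindromic of degree $n-1$ because each summand $x^i(1+x)^{n-1-2i}$ is palindromic of that degree, and the left-hand side by the palindromicity of Chow polynomials from \cite{ferroni-matherne-vecchi}—so that uniqueness of the $\gamma$-basis expansion reduces the claim to an inductive verification on $n$; the difficulty then becomes showing that the stable-set flag $h$-sum satisfies the same recursion under $\P \mapsto [s,\un]$ as $\uH_{\P}(x)$.
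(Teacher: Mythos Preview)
The paper does not prove this theorem; it is quoted verbatim from \cite[Theorem~4.25]{ferroni-matherne-vecchi} and used as a black box. So there is no in-paper argument to compare against.

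As for your proposal on its own merits: the reduction of the augmented identity to the non-augmented one via $\H_{\P}=\uH_{\widetilde{\P}}$ and the shift $\beta_{\widetilde{\P}}(S')=\beta_{\P}(S'-1)$ is correct and clean. The chain expansion of $\uH_{\P}$ obtained by unrolling the recursion is also a legitimate starting point.

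The gap is that you have not actually carried out the ``principal step.'' You correctly identify that one must convert the signed sum over chains and refinements into the stable-set expression, and you name two plausible strategies (a sign-reversing involution killing configurations with consecutive ranks, or an inductive argument via palindromicity and uniqueness of the $\gamma$-expansion), but neither is executed. Both routes require real work: the involution needs to be constructed and shown to be weight-preserving and sign-reversing on the complement of the stable configurations; the inductive route needs you to show that the flag-$h$ sum over stable $S\subseteq\{2,\ldots,n-1\}$ satisfies the same recursion as $\uH_{\P}$, which is not obvious because the stability and the boundary restriction $1\notin S$ interact nontrivially with passing to an upper interval $[s,\widehat{1}]$. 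As written, what you have is a proof \emph{plan}, not a proof.
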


The preceding result can be recast in terms of the so-called $\gamma$-polynomials. Recall that a polynomial $f(x) = \sum_{i=0}^d a_i\, x^i$ (where we allow $a_d=0$) is said to be \emph{symmetric} (or \emph{palindromic}) with center of symmetry $d/2$ if $a_i = a_{d-i}$ for each $i\in \mathbb{Z}$ (here we are implicitly defining $a_i=0$ if $i<0$ or $i>d$). This symmetry condition can be expressed succinctly by the equation $f(x) = x^{d}\, f(x^{-1})$. 

A linear basis for symmetric polynomials with center of symmetry $d/2$ is given by the collection $\left\{x^i(1+x)^{d-2i}\right\}_{i=0}^{\lfloor d/2\rfloor}$. Therefore, if $f(x)$ is symmetric with center of symmetry $d/2$, we can write
    \[ f(x) = \sum_{i=0}^{\lfloor d/2\rfloor} \gamma_i\, x^i (1+x)^{d-2i},\]
for some unique real numbers $\gamma_0,\ldots,\gamma_{\lfloor d/2\rfloor}$.  The \emph{$\gamma$-polynomial} associated to $f(x)$ is the univariate polynomial 
\[
\gamma(f;y) \coloneqq \sum_{i=0}^{\floor{d/2}} \gamma_{i} \: y^{i}. 
\]
If the $\gamma$-polynomial of $f(x)$ has nonnegative coefficients, we say that $f(x)$ is \emph{$\gamma$-positive}. A routine check shows that if $f(x)$ is $\gamma$-positive, then it is also unimodal. In particular, Theorem~\ref{thm:gamma-flag-h-vector} implies that if the flag $h$-vector of a poset is nonnegative, then the Chow and the augmented Chow polynomials are $\gamma$-positive. 

\smallskip

A very useful application of Theorem~\ref{thm:gamma-flag-h-vector} is the following formula for Chow and augmented Chow polynomials of poset duals.

\begin{lemma}[Duality Lemma]\label[lemma]{lemma:duality}
    For any finite graded bounded poset $\P$, the following formulas hold:
    \[ \uH_{\P^*}(x) = \H_{\trunc(\P)}(x) \qquad \text{ and } \qquad \H_{\P^*}(x) = \H_{\P}(x). \]
    Equivalently,
    \[ \gamma(\uH_{\P^*};y) = \gamma(\H_{\trunc(\P)};y) \qquad \text{ and } \qquad \gamma(\H_{\P^*};y) = \gamma(\H_{\P};y). \]
\end{lemma}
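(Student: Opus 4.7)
The plan is to derive both identities directly from the flag $h$-vector expressions for $\uH_{\P}(x)$ and $\H_{\P}(x)$ recorded in \Cref{thm:gamma-flag-h-vector}, combined with two elementary bijective observations.

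The first ingredient is the behavior of flag $h$-vectors under order reversal. Writing $n$ for the rank of $\P$, a chain in $\P$ with rank set $T = \{r_1 < \cdots < r_m\}$ corresponds under the identity map on underlying sets to a chain in $\P^{*}$ with rank set $n - T := \{n - r_m, \ldots, n - r_1\}$, because $\rk_{\P^{*}}(s) = n - \rk_{\P}(s)$. Hence $\alpha_{\P^{*}}(S) = \alpha_{\P}(n - S)$ for every $S \subseteq \{0, 1, \ldots, n\}$, and by Möbius inversion $\beta_{\P^{*}}(S) = \beta_{\P}(n - S)$. The second ingredient concerns truncation: $\trunc(\P)$ has rank $n - 1$ and, since it is obtained by deleting all rank-$(n-1)$ elements while keeping every element of rank $0, 1, \ldots, n - 2$, chains supported on ranks in $\{1, \ldots, n - 2\}$ in $\P$ and in $\trunc(\P)$ coincide. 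Consequently $\alpha_{\trunc(\P)}(T) = \alpha_{\P}(T)$ and therefore $\beta_{\trunc(\P)}(T) = \beta_{\P}(T)$ for every $T \subseteq \{1, \ldots, n - 2\}$.

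With these observations, the identity $\H_{\P^{*}}(x) = \H_{\P}(x)$ is obtained by applying the second formula of \Cref{thm:gamma-flag-h-vector} to $\P^{*}$, substituting $\beta_{\P^{*}}(S) = \beta_{\P}(n - S)$, and reindexing via the involution $S \mapsto n - S$, which restricts to a cardinality-preserving bijection on the stable subsets of $\{1, \ldots, n - 1\}$. For the identity $\uH_{\P^{*}}(x) = \H_{\trunc(\P)}(x)$, one applies the first formula of \Cref{thm:gamma-flag-h-vector} to $\P^{*}$ and performs the same substitution and reindexing: the involution $S \mapsto n - S$ sends stable subsets $S \subseteq \{2, \ldots, n - 1\}$ bijectively to stable subsets $T \subseteq \{1, \ldots, n - 2\}$, and the weight $x^{|S|}(1 + x)^{n - 1 - 2|S|}$ matches exactly the weight appearing in the augmented formula for the rank-$(n - 1)$ poset $\trunc(\P)$. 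Finally the second observation replaces $\beta_{\P}(T)$ with $\beta_{\trunc(\P)}(T)$ in the resulting sum.

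No serious obstacle is expected; the main care needed is in aligning the two index shifts, specifically verifying that the $-1$ hard-coded into the exponent of $(1 + x)$ in the formula for $\uH$ compensates exactly for the drop in rank produced by truncation, and that each bijection on stable subsets lands in the desired range. The equivalent statements for $\gamma$-polynomials follow automatically, since $\gamma(\,\cdot\,; y)$ records the coefficients in the symmetric basis $\{x^i (1 + x)^{d - 2i}\}_i$, which is uniquely determined by the symmetric polynomial itself.
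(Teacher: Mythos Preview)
Your proof is correct and follows essentially the same route as the paper's: both arguments invoke \Cref{thm:gamma-flag-h-vector}, use $\beta_{\P^*}(S)=\beta_{\P}(n-S)$, and reindex via the involution $S\mapsto n-S$ on stable subsets. The paper works directly at the level of the $\gamma$-polynomials and leaves the identification $\beta_{\trunc(\P)}(T)=\beta_{\P}(T)$ for $T\subseteq\{1,\ldots,n-2\}$ implicit, whereas you make it explicit; otherwise the arguments are identical.
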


\begin{proof}
Let $n$ be the rank of $\P$. In the non-augmented case,  

\begin{align*}   
\gamma(\uH_{\Pdual};y) &= \sum_{\substack{S \textrm{ stable}\\ 1 \notin S}}\beta_{\Pdual}(S)y^{|S|} \\
    &= \sum_{\substack{S \textrm{ stable}\\ 1 \notin S}}\beta_{\P}(n-S)y^{|S|} 
    =\sum_{\substack{S \textrm{ stable}\\ n-1 \notin S}}\beta_{\P}(S)y^{|S|} 
    = \gamma(\H_{\Tr(\P)}; y).
\end{align*}
In the augmented case,  
\begin{align*}
    \gamma(\H_{\Pdual};y) 
    &= \sum_{S \textrm{ stable}}\beta_{\Pdual}(S)y^{|S|}\\  
    &= \sum_{S \textrm{ stable}}\beta_{\P}(n-S)y^{|S|} 
    =\sum_{S \textrm{ stable}}\beta_{\P}(S)y^{|S|} 
    = \gamma(\H_{\P}; y).\qedhere
\end{align*} 
\end{proof}

\subsection{Interlacing polynomials}
Let $f$ be a monic real-rooted polynomial of degree $n$ and $g$ a monic real-rooted polynomial of degree $n$ or $n+1$, with ordered roots
\[
\alpha_n \leqslant \cdots \leqslant \alpha_1, \quad (\beta_{n+1} \leqslant) \beta_n \leqslant \cdots \leqslant \beta_1
\]
respectively (ignoring $\beta_{n+1}$ if $g$ is of degree $n$). We say that $f$ \emph{interlaces} $g$, denoted by $f \preceq g$, if
\[
(\beta_{n+1} \leqslant \: ) \: \alpha_n \leqslant \beta_n \leqslant \alpha_{n-1} \leqslant \beta_{n-1} \leqslant \cdots \leqslant \alpha_1 \leqslant \beta_1.
\]
We say that the \emph{zeros of $f$ and $g$ interlace} if either $f \preceq g$, or $g \preceq f$. By convention, the zeros of any two polynomials of degree $0$ or $1$ interlace, $0 \preceq f, f \preceq 0$ and $0 \preceq 0$ for all real-rooted polynomials $f$ of arbitrary degree. 

A classical analytic criterion of interlacing polynomials is the following positivity condition: If $g \preceq f$, then the Wronskian of $f$ and $g$ is nonnegative over all real numbers
\[
\textrm{W}[f, g] \coloneqq \det \begin{pmatrix} 
f' & g' \\
f  & g 
\end{pmatrix} \geqslant 0. 
\] The converse is true if the multiplicity of every root of $f$ and $g$ is $1$. 

The main motivation behind the notion of interlacing polynomials is the following result, often called Obreschkoff's theorem, which allows us to produce new real-rooted polynomials using interlacing sequences of polynomials. 

\begin{thm}[{\cite[Satz 5.2]{obreshkov1963verteilung}}]\label{thm:sum-of-interlacing-is-RR}
    Let $f, g$ be nonzero real-rooted polynomials. The zeros of $f$ and $g$ interlace if and only if any polynomial in the linear space spanned by $f, g$
    \[
    \left\{\: \alpha \: f + \beta \: g \mid \alpha, \beta \in \R \: \right\}
    \] only has real roots. 
\end{thm}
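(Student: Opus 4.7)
The plan is to prove the two directions separately. For the forward direction, I would argue directly via the intermediate value theorem in the case of strict interlacing with simple roots, then extend by perturbation. For the converse, I would use a continuity-of-roots argument based on the rational function $-f/g$. Throughout, I would pay attention to the two degree cases ($\deg g = \deg f$ and $\deg g = \deg f + 1$) distinguished by the definition preceding the theorem, and to weak interlacing with possible multiple roots. A cleaner alternative that I would keep in mind as a backup is the Hermite--Biehler theorem, which characterizes interlacing of the pair $(f, g)$ by the stability of the complex polynomial $f + ig$, and reduces both implications to elementary properties of stable polynomials under multiplication by complex scalars.

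For the forward direction, assume $f \preceq g$, and first handle the base case where $f, g$ have simple and strictly interlacing roots, say $\alpha_n < \cdots < \alpha_1$ for $f$ and $\beta$'s for $g$ strictly interleaved. For any $h = \alpha f + \beta g$ with $\beta \neq 0$, evaluate at the $\alpha_i$: since $h(\alpha_i) = \beta\, g(\alpha_i)$ and $g$ has exactly one simple root in each open interval $(\alpha_{i+1}, \alpha_i)$, the values $g(\alpha_i)$ alternate in sign. The intermediate value theorem then produces $n-1$ real zeros of $h$, one in each such interval; the remaining zero(s), whose number is fixed by $\deg h$, are located by reading off the signs of $h$ at $\pm\infty$ from the leading coefficients, matching the degree count exactly. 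To extend to the general case (weak inequalities, possible multiple roots), perturb $f, g$ to $f_\el, g_\el$ with simple, strictly interlacing roots, apply the base case to $\alpha f_\el + \beta g_\el$, and pass to the limit $\el \to 0$, using that the set of real-rooted polynomials of bounded degree is closed under coefficient-wise convergence (by Hurwitz's theorem, for instance).

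For the converse, assume every polynomial in $\Span(f, g)$ is real-rooted. After dividing both $f, g$ by $\gcd(f, g)$ we may assume them coprime, since the common factor divides every linear combination and the interlacing property is invariant under this reduction. Consider the one-parameter family $h_t = f + t g$ for $t \in \R$. By hypothesis each $h_t$ is real-rooted, so its real zeros are exactly the real preimages of $t$ under the rational function $\phi(x) = -f(x)/g(x)$. Coprimality identifies the real poles of $\phi$ with the real roots of $g$. Requiring $\phi^{-1}(\{t\}) \cap \R$ to carry the correct total multiplicity for every $t \in \R$ forces $\phi$ to be strictly monotonic on each open interval between consecutive real poles and to surject onto $\R$ on each such interval; a counting argument then places the real zeros of $f$ in precisely the positions prescribed by the interlacing condition.

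The main obstacle is the converse direction, where the continuity-of-roots and counting argument becomes delicate when $\phi$ develops a critical point coinciding with a multiple real root, or when roots of $h_t$ escape to infinity as $|t| \to \infty$ in a degenerate fashion. Handling these degenerate cases requires a careful perturbation and limiting argument, and one must also unify the two degree cases into a single clean statement. If the bookkeeping becomes unwieldy, the Hermite--Biehler route is a clean fallback: its converse direction is standard, and its forward direction then follows from the invariance of the set of stable polynomials under multiplication by nonzero complex scalars together with real- and imaginary-part extraction applied to $(\alpha - i\beta)(f + ig) = (\alpha f + \beta g) + i(\alpha g - \beta f)$.
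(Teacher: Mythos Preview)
The paper does not give its own proof of this theorem: it is quoted as a classical result of Obreschkoff, with a citation to \cite{obreshkov1963verteilung}, and used as a black box throughout. So there is no ``paper's proof'' to compare against.

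As a standalone argument, your outline follows one of the standard routes. The forward direction via sign alternation at the roots of $f$ and perturbation to the strict case is correct. For the converse, your rational-function argument is also standard, but be careful with one point: after dividing by $\gcd(f,g)$ you assert that ``the interlacing property is invariant under this reduction,'' which is true but not entirely trivial---you need that removing a common real factor of multiplicity $m$ deletes $m$ copies of the same root from both lists, preserving the alternation pattern. Your Hermite--Biehler fallback is the cleanest way to handle all the edge cases uniformly, and is in fact how most modern references (e.g., Rahman--Schmeisser or Br\"and\'en's survey) package the result; if you want a fully rigorous write-up rather than a sketch, I would recommend committing to that route from the start rather than treating it as a backup.
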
 

This result will be instrumental in the sequel. The following properties of interlacing polynomials are due to Wagner. 

\begin{proposition}[\cite{wagner1992total}]\label[proposition]{prop:interlacing-properties}
Let $f, g, h$ be real-rooted polynomials in $\R_{\geqslant 0}[x]$. The following properties hold: 
\begin{enumerate}[\normalfont (i)]
    \item (Convexity) If $f \preceq g, h$, then for any $\alpha, \beta \in \R_{\geqslant 0}$, $f \preceq \alpha g + \beta h$. 
    \item (Convexity) If $g, h \preceq f$, then for any $\alpha, \beta \in \R_{\geqslant 0}$, $\alpha g + \beta h \preceq f$. 
    \item (Adding zero) If $f \preceq g$, then $g \preceq x f$. 
    \item (Interpolation) If $f \preceq g$, then for all $\alpha, \beta \in \R_{\geqslant 0}$, $f \preceq \alpha f + \beta g \preceq g$. 
\end{enumerate}
\end{proposition}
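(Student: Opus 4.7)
The plan is to reduce all four assertions to the Wronskian criterion for interlacing polynomials, combined with the elementary fact that a polynomial in $\R_{\geqslant 0}[x]$ has only nonpositive real roots. Recall that $p \preceq q$ implies $\textrm{W}[q, p] := q' p - p' q \geqslant 0$ on all of $\R$; for polynomials with only simple real roots this is also a sufficient condition, and the general case is handled by a standard perturbation together with continuous dependence of roots on coefficients.

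For parts (i) and (ii) the key input is the bilinearity of the Wronskian. Under the hypotheses of (i), a direct computation gives
\[
    \textrm{W}[\alpha g + \beta h,\, f] \;=\; \alpha\,(g'f - f'g) \;+\; \beta\,(h'f - f'h) \;\geqslant\; 0,
\]
since $\alpha, \beta \geqslant 0$ and both summands are nonnegative by the hypotheses $f \preceq g$ and $f \preceq h$. To upgrade this sign condition to the interlacing $f \preceq \alpha g + \beta h$, one must verify that the convex combination is itself real-rooted. This follows from the standard principle that when two real-rooted polynomials $g, h$ share a common interlacer $f$, the pair $(g, h)$ is \emph{compatible}, meaning every nonnegative combination is real-rooted. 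A clean way to establish this is to track the roots of $(1-t)g + th$ as $t$ varies in $[0, 1]$: a transition from real to complex roots would force a double real root at some $t^{\ast}$, which would contradict the Wronskian inequality with $f$ already established for the endpoints. Part (ii) is obtained symmetrically from $\textrm{W}[f,\, \alpha g + \beta h] = \alpha\,\textrm{W}[f, g] + \beta\,\textrm{W}[f, h] \geqslant 0$.

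Part (iii) is a direct root-ordering argument. Write the roots of $f$ in increasing order as $\alpha_n \leqslant \cdots \leqslant \alpha_1$. Since $f \in \R_{\geqslant 0}[x]$, all $\alpha_i \leqslant 0$, so the roots of $xf$ in increasing order are $\alpha_n \leqslant \cdots \leqslant \alpha_1 \leqslant 0$. Splitting into the cases $\deg g = \deg f$ and $\deg g = \deg f + 1$, the interlacing $g \preceq xf$ reduces to the interlacing $f \preceq g$ together with the single additional inequality that the largest root of $g$ is nonpositive, which is again immediate from $g \in \R_{\geqslant 0}[x]$. Part (iv) follows formally by combining (i) and (ii): applying (i) to the trivially interlacing pair $f \preceq f$ and the hypothesis $f \preceq g$ yields $f \preceq \alpha f + \beta g$, and applying (ii) to $f \preceq g$ together with $g \preceq g$ yields $\alpha f + \beta g \preceq g$.

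The main technical obstacle is precisely the compatibility-from-common-interlacer step underlying (i) and (ii): a rigorous verification that the convex combinations remain real-rooted, rather than invoking this fact as a black box. The preferred route is the double-root collision argument sketched above, or alternatively a perturbation of $f, g, h$ to polynomials with only simple roots followed by continuity of roots. Once that step is secured, the Wronskian computations are one-line verifications and the remaining items are routine bookkeeping.
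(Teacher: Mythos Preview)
The paper does not supply its own proof of this proposition: it is stated with the attribution \cite{wagner1992total} and used as a black box throughout, so there is no argument in the text to compare against. What you have written is therefore not a reconstruction of the paper's reasoning but an independent proof sketch.

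As such a sketch, your outline is sound. The Wronskian bilinearity handles (i) and (ii) once real-rootedness of the combination is known, your root-ordering check for (iii) is correct (the only new inequality $\beta_1 \leqslant 0$ does follow from $g \in \R_{\geqslant 0}[x]$), and (iv) is indeed a formal consequence of (i) and (ii) via $f \preceq f$ and $g \preceq g$. You also correctly identify the one genuine gap: showing that $\alpha g + \beta h$ is real-rooted when $g$ and $h$ share a common interlacer. Your double-root collision sketch is the right idea but is not a proof as written; to make it rigorous you would need to argue that a complex root pair appearing at some $t^\ast$ forces the Wronskian with $f$ to change sign in a neighborhood, or else carry out the simple-roots perturbation carefully. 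Either route works, but neither is quite as automatic as the phrase ``standard principle'' suggests.
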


\begin{definition}
A family of real-rooted polynomials $F_n = (f_1, \ldots, f_n)$ with nonnegative coefficients is an \emph{interlacing family} or \emph{interlacing sequence} if and only if $f_{i} \preceq f_j$ for all $1 \leqslant i < j \leqslant n$. 
\end{definition}

Our main tool to produce new interlacing sequences out of known interlacing sequences is the following more general theorem about linear operators preserving sequences of interlacing polynomials with nonnegative coefficients. 
\begin{thm}[{\cite[Theorem~7.8.5]{branden2015unimodality}}]
\label{thm:interlacing}
    Let $\mathbf{G}(x) = \begin{pmatrix}G_{ij}(x) \end{pmatrix}$ be an $m\times n$ matrix of real polynomials $G_{ij}(x)$. Then multiplication by $\mathbf{G}(x)$ preserves interlacing sequences of polynomials with nonnegative coefficients if and only if 
    \begin{enumerate}[\normalfont (i)]
        \item each polynomial $G_{ij}(x)$ has positive coefficients for all $i \in [m]$ and all $j \in [n]$, and 
        \item for all $1 \leqslant i_1 < i_2 \leqslant m$ and all $1 \leqslant j_1 < j_2 \leqslant n$ we have the interlacing relation 
        \begin{equation*}
            (p x + q)G_{i_1j_2}(x) + G_{i_2j_2}(x) \preceq (p x + q)G_{i_1j_1}(x) + G_{i_2 j_1}(x)
        \end{equation*}
        for all real numbers $p, q >0.$
    \end{enumerate}
\end{thm}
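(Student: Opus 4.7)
The plan is to prove both directions of this characterization. For the \emph{necessity} direction, I would extract conditions (i) and (ii) by feeding the operator explicit test inputs drawn from very small interlacing sequences. Plugging in the sequence $(0,\ldots,0,1,0,\ldots,0)$ with $1$ in position $j$ produces the column $(G_{1j},\ldots,G_{mj})$ as output, which must itself be an interlacing sequence of polynomials with nonnegative coefficients; strict positivity in (i) then follows from a short perturbation argument, for instance by combining this with the all-ones input sequence and invoking the interpolation clause of \Cref{prop:interlacing-properties}. For (ii), I would test the operator on length-$n$ interlacing sequences whose support lies in two positions $j_1 < j_2$, with the two nonzero entries forming an interlacing pair that involves the factor $px+q$ for arbitrary $p, q > 0$; demanding that the resulting pair of outputs $(h_{i_1}, h_{i_2})$ be interlacing for every $i_1 < i_2$ yields relation (ii) directly, after rearrangement and possibly combining several such tests by linearity.

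For the \emph{sufficiency} direction, assume (i) and (ii) hold and fix an interlacing sequence $(f_1,\ldots,f_n)$. I would induct on $n$, reducing the verification to the case $n = 2$; the base case $n = 1$ is already contained in (i). For the induction step, decompose $h_i = \sum_{j<n} G_{ij}f_j + G_{in}f_n$ and combine the convexity and interpolation clauses of \Cref{prop:interlacing-properties} with the inductive hypothesis to reduce the pairwise interlacing of $h_{i_1}$ and $h_{i_2}$ (for $i_1 < i_2$) to the $2 \times 2$ sub-operator given by rows $i_1, i_2$ and columns $j, n$. In the $2 \times 2$ setting, Obreschkoff's theorem (\Cref{thm:sum-of-interlacing-is-RR}) says that $f_1 \preceq f_2$ is equivalent to real-rootedness of every polynomial in the pencil spanned by $f_1, f_2$; this lets us parametrize interlacing pairs via the single one-parameter family $(px+q)$ featured in (ii), and the desired interlacing $h_{i_1} \preceq h_{i_2}$ then follows from (ii) by checking the Wronskian criterion pointwise and invoking the positivity in (i) to ensure that the relevant output polynomials have nonnegative coefficients.

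The main obstacle will be the sufficiency direction, and in particular explaining why the apparently narrow one-parameter family $(px+q)$ appearing in (ii) suffices to verify preservation of interlacing for every input pair. Obreschkoff's theorem is the key technical lever: it converts the preservation of interlacing for an arbitrary pair $f_1 \preceq f_2$ into a condition on the pencil they span, which can then be matched against (ii). A secondary subtlety, though routine once identified, is the careful bookkeeping of degenerate boundary cases — for instance pairs with constant or zero entries — where the convention that polynomials of degree at most one always interlace must be applied consistently throughout the induction.
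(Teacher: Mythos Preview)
The paper does not prove this statement: Theorem~\ref{thm:interlacing} is quoted verbatim from \cite[Theorem~7.8.5]{branden2015unimodality} and used throughout as a black box, with no proof or sketch provided. There is therefore no ``paper's own proof'' to compare your proposal against.

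As a brief comment on the proposal itself: the necessity direction is fine in spirit, but the sufficiency sketch has a real gap. Your proposed induction on $n$ does not actually reduce to the $2\times 2$ case in the way you suggest. Writing $h_i = \sum_{j<n} G_{ij}f_j + G_{in}f_n$ and appealing to convexity would require that each partial sum $\sum_{j<n} G_{ij}f_j$ already stand in the correct interlacing relation to $G_{in}f_n$, which is not what the inductive hypothesis gives you (the inductive hypothesis concerns the image of $(f_1,\ldots,f_{n-1})$ under the truncated matrix, not its relation to the $n$-th column). The actual argument in Br\"and\'en's survey proceeds differently: one first shows that condition~(ii) is equivalent to the $2\times 2$ submatrix preserving interlacing pairs, and then observes that an $m\times n$ matrix preserves interlacing sequences if and only if every $2\times 2$ submatrix does --- a fact that itself requires justification via the characterization of interlacing sequences through pairwise interlacing plus convexity. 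Your sketch gestures at the right ingredients (Obreschkoff, convexity, the $(px+q)$ parametrization) but does not assemble them into a working reduction.
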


We provide a short proof of the following lemma, a slight generalization of  \cite[Corollary 7.8.7]{branden2015unimodality}, which shows that certain matrices of specific shapes satisfy the hypotheses of Theorem~\ref{thm:interlacing}. Those matrices will appear in the proofs of Theorem~\ref{thm:main2-intro} and Theorem~\ref{thm:gamma-RR-mono-rank-up-uniform}. 
\begin{lemma}\label[lemma]{lem:two-column-monotonicity-interlacing}
    Let $m$ and $n$ be positive integers, and let $d_1 \leqslant d_2 \leqslant \cdots \leqslant d_{m}$ be a weakly increasing sequence of integers. 
    Let $\mathbf{A}(x)$ be an $m \times n$ matrix with polynomial entries defined as 
    \[
    \mathbf{A}(x) \coloneqq \begin{pmatrix} a_{ij}(x) \end{pmatrix}, \text{ where } a_{ij}(x) = \begin{cases}
    \alpha_{i} \cdot x & \text{ if } j \leqslant d_i, \\
    \alpha_{i} & \text{ if } j > d_i, 
    \end{cases} \quad \alpha_i \in \R_{\geqslant 0}.
    \] 
    For any fixed column number $1 \leqslant j \leqslant n$, any two rows $1 \leqslant i < i' \leqslant m$, and nonnegative real numbers $p, q$, we write $f_j(x)$ for the polynomial 
    \begin{align*}
        f_{j}(\mathbf{A}; p, q, i, i'; x) \coloneqq (p \: x + q ) \:  a_{i \: j}(x) + a_{i' \: j}(x). 
    \end{align*}
    Then for any two column numbers $1 \leqslant j < j' \leqslant n$, and any nonnegative real numbers $p, q \in \R_{\geqslant 0}$,  we have that  
    \[
    f_{j'}\preceq f_{j}.
    \]
\end{lemma}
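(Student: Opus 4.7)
The plan is to exploit the rigid piecewise-constant structure of the matrix $\mathbf{A}(x)$. Fix rows $1 \le i < i' \le m$ and parameters $p, q \ge 0$. Since $d_i \le d_{i'}$, the column indices $\{1, \dots, n\}$ split into (up to) three consecutive intervals $j \le d_i$, $d_i < j \le d_{i'}$, and $j > d_{i'}$; within each interval the pair $(a_{ij}(x), a_{i'j}(x))$, and hence $f_j(x)$, is constant in $j$. Therefore the interlacing $f_{j'} \preceq f_j$ is automatic whenever $j$ and $j'$ lie in a common interval, and the lemma reduces to checking interlacing among (at most) three specific polynomials at the boundaries.

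First I would tabulate these three polynomials. A short calculation yields
\begin{align*}
f^{(1)}(x) &= \alpha^i p\, x^2 + (\alpha^i q + \alpha^{i'})\, x, \\
f^{(2)}(x) &= (\alpha^i p + \alpha^{i'})\, x + \alpha^i q, \\
f^{(3)}(x) &= \alpha^i p\, x + (\alpha^i q + \alpha^{i'}),
\end{align*}
corresponding to the three regimes in order. All have nonnegative coefficients, and the required interlacings $f^{(2)} \preceq f^{(1)}$, $f^{(3)} \preceq f^{(1)}$, and $f^{(3)} \preceq f^{(2)}$ can then be verified by direct root comparison.

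Assuming for now that $p, q, \alpha^i, \alpha^{i'} > 0$, the roots of $f^{(1)}$ are $0$ and $r_1 := -(\alpha^i q + \alpha^{i'})/(\alpha^i p)$, the root of $f^{(2)}$ is $r_2 := -\alpha^i q/(\alpha^i p + \alpha^{i'})$, and the root of $f^{(3)}$ equals $r_1$. All three interlacings then collapse to the single inequality $r_1 \le r_2 \le 0$. The bound $r_2 \le 0$ is immediate, and after clearing denominators the bound $r_1 \le r_2$ becomes $(\alpha^i q + \alpha^{i'})(\alpha^i p + \alpha^{i'}) \ge (\alpha^i p)(\alpha^i q)$, whose left-hand side exceeds its right-hand side by $\alpha^i \alpha^{i'}(p+q) + (\alpha^{i'})^2 \ge 0$.

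The only residual work concerns the degenerate cases in which one of $p, q, \alpha^i, \alpha^{i'}$ vanishes, so that some $f^{(k)}$ drops in degree or becomes identically zero. These are dispatched by the conventions recalled in the excerpt: any two polynomials of degree at most one have interlacing zero sets, and the zero polynomial interlaces with every real-rooted polynomial. Thus the main obstacle, to the extent there is one, is simply the bookkeeping of degenerate subcases rather than any genuine analytic difficulty; the piecewise-constant structure of $\mathbf{A}(x)$ reduces the proof to a short finite check.
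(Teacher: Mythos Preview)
Your proposal is correct and takes essentially the same approach as the paper: both reduce the claim to a direct case analysis on the relative position of the column indices with respect to $d_i \le d_{i'}$, then compare roots explicitly. Your organization is in fact a bit tidier---you observe up front that $f_j$ depends only on which of the three intervals $j$ lies in, so there are only three distinct polynomials to compare pairwise, whereas the paper enumerates five positional cases for $(j,j')$ separately (and incidentally does not list the case $j \le d_i \le d_{i'} < j'$, which your $f^{(3)} \preceq f^{(1)}$ covers).
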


\begin{proof}
    By monotonicity of the sequence $(d_k)_{k=1}^{m}$, we have $d_i \leqslant d_{i'}$. 
    By definition of the matrix $A$, there are $5$ cases, depending on the relative positions of $j < j'$ with respect to $d_i \leqslant d_{i'}$. For each of the cases, we fix $\alpha_{i}, \alpha_{i'} \in \R_{\geqslant 0}$, and consider the $2 \times 2$ submatrix 
    \[
    \mathbf{A}_{i, \:i',\: j,\:j'} \coloneqq 
    \begin{pmatrix}
        a_{i\: j} & a_{i \: j'} \\
        a_{i' \: j} & a_{i' \: j'}
    \end{pmatrix}. 
    \]
    \begin{enumerate}[\normalfont(i), leftmargin=18pt]
        \item If $j < j' \leqslant d_i \leqslant d_{i'}$, then we have 
        \[
        \mathbf{A}_{i, \:i',\: j,\:j'}(x) = 
        \begin{pmatrix}
        \alpha_i \: x & \alpha_i \: x  \\
        \alpha_{i'} \: x & \alpha_{i'} \: x
        \end{pmatrix}.
        \]
        This implies that $f_{j} = f_{j'}$ for all $p, q \in \R_{> 0}$. 
        \item If $j \leqslant  d_i < j' \leqslant d_{i'}$, then we have 
        \[
        \mathbf{A}_{i, \:i',\: j,\:j'}(x) = 
        \begin{pmatrix}
        \alpha_i \: x & \alpha_i  \\
        \alpha_{i'} \: x & \alpha_{i'} \: x
        \end{pmatrix}.
        \]
        This implies that 
        \begin{align*}
            f_{j}(x) &= (px + q) \alpha_ix + \alpha_{i'} x, \text{ with roots } - \frac{\alpha_{i}q + \alpha_{i'}}{\alpha_{i}p}, \text{ and }0,\\
            f_{j'}(x) &= (px + q) \alpha_i + \alpha_{i'} x, \text{ with root } - \frac{\alpha_{i} q}{\alpha_{i}p + \alpha_{i'}}.\\
        \end{align*}
        This implies that $f_{j'} \preceq f_{j}$ for all $p, q \in \R_{> 0}$. 

        \item If $d_{i} < j < j' \leqslant d_{i'}$, then we have
        \[
        \mathbf{A}_{i, \:i',\: j,\:j'}(x) = 
        \begin{pmatrix}
        \alpha_i  & \alpha_i  \\
        \alpha_{i'} \: x & \alpha_{i'} \: x
        \end{pmatrix}.
        \]
        This implies that $f_{j'}(x) = f_{j}(x)$ for all $p, q \in \R_{> 0}$. 

        \item If $d_i < j \leqslant d_{i'} < j'$, then we have  
        \[
        \mathbf{A}_{i, \:i',\: j,\:j'}(x) = 
        \begin{pmatrix}
        \alpha_i  & \alpha_i  \\
        \alpha_{i'} \: x & \alpha_{i'}
        \end{pmatrix}.
        \]
        This implies that 
        \begin{align*}
            f_{j}(x) &= (px+q) \alpha_{i} + \alpha_{i'} x, \text{ with root} - \frac{\alpha_{i} q}{\alpha_{i} p + \alpha_{i'}},\\
            f_{j'}(x) &= (px+q) \alpha_{i} + \alpha_{i'}, \text{ with root} -\frac{\alpha_{i}q + \alpha_{i'}}{\alpha_{i}p}.  
        \end{align*}
        This implies that $f_{j'} \preceq f_{j}$ for all $p, q \in \R_{> 0}$.

        \item If $d_i \leqslant d_{i'} < j < j'$, then we have 
        \[
        \mathbf{A}_{i, \:i',\: j,\:j'} =
        \begin{pmatrix}
        \alpha_i  & \alpha_i  \\
        \alpha_{i'} & \alpha_{i'}
        \end{pmatrix}.
        \]
        This implies that $f_{j'} = f_{j}$ for all $p, q \in \R_{> 0}$. \qedhere
    \end{enumerate}
\end{proof}
 
\medskip
We also have the following useful corollary of Theorem~\ref{thm:interlacing}. 

\begin{corollary}[{\cite[Corollary 7.8.6]{branden2015unimodality}}]
\label[lemma]{lem:two-minors-monotonicity-interlacing}
    Let $A$ be an $m \times n$ matrix with nonnegative real entries. Then $A$ preserves interlacing families of polynomials with nonnegative coefficients if and only if all two-by-two minors of $A$ are nonnegative.
\end{corollary}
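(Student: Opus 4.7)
The plan is to derive this corollary as a direct application of Theorem~\ref{thm:interlacing} to the constant-entry polynomial matrix $\mathbf{G}(x) := A$ with $G_{ij}(x) = a_{ij}$, together with a dedicated test sequence for the converse direction.

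For the sufficiency direction, assume every $2 \times 2$ minor of $A$ is nonnegative, and verify hypothesis~(ii) of Theorem~\ref{thm:interlacing}. Fix rows $i_1 < i_2$, columns $j_1 < j_2$, and positive reals $p, q > 0$. The two polynomials
\begin{align*}
    f(x) &= (px+q)\, a_{i_1 j_2} + a_{i_2 j_2} = a_{i_1 j_2}\, p\, x + (a_{i_1 j_2}\, q + a_{i_2 j_2}),\\
    g(x) &= (px+q)\, a_{i_1 j_1} + a_{i_2 j_1} = a_{i_1 j_1}\, p\, x + (a_{i_1 j_1}\, q + a_{i_2 j_1})
\end{align*}
are linear with nonnegative coefficients, so $f \preceq g$ is equivalent to a single root-comparison inequality. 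Expansion followed by cancellation of the common term $a_{i_1 j_1} a_{i_1 j_2}\, p q$ reduces this inequality to $a_{i_1 j_1} a_{i_2 j_2} - a_{i_1 j_2} a_{i_2 j_1} \geqslant 0$, exactly the desired minor condition. Hypothesis~(i) of Theorem~\ref{thm:interlacing}, which requires strict positivity of the coefficients, is handled by a standard perturbation-and-limit argument: approximate $A$ by matrices with strictly positive entries and nonnegative $2\times 2$ minors, apply Theorem~\ref{thm:interlacing} to each approximant, and pass to the limit using that the interlacing relation is preserved under coefficient-wise limits.

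For the converse direction, the plan is to test $A$ on the interlacing family $(f_1, \ldots, f_n)$ defined by $f_{j_1} := 1 + x$, $f_{j_2} := (1 + x)^2$, and $f_k := 0$ for $k \notin \{j_1, j_2\}$. This is a valid interlacing family: the zero polynomial interlaces every polynomial by convention, and $1 + x \preceq (1 + x)^2$ follows from direct root comparison with respect to the double root of $(1 + x)^2$. Applying $A$ yields
\[
    (Af)_i \;=\; (1 + x)\bigl(a_{ij_2}\, x + a_{ij_1} + a_{ij_2}\bigr),
\]
a polynomial of degree two (in the generic case $a_{ij_2} > 0$) whose two roots are $-1$ and $-1 - a_{ij_1}/a_{ij_2}$, both lying at most at $-1$. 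The preservation of interlacing forces $(Af)_{i_1} \preceq (Af)_{i_2}$, and since both polynomials share $-1$ as the larger of their two roots, interlacing reduces to comparison of the smaller roots, which yields exactly $a_{i_1 j_1} a_{i_2 j_2} \geqslant a_{i_1 j_2} a_{i_2 j_1}$, as desired. Degenerate cases in which some entries vanish are handled by a short separate analysis.

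The main anticipated obstacle is the perturbation step used in the sufficiency direction: one must choose a perturbation of $A$ with strictly positive entries that still preserves nonnegativity of every $2 \times 2$ minor. This is feasible because matrices with strictly positive entries form a dense subset of the closed cone of matrices with nonnegative $2\times 2$ minors, which legitimizes the passage to the limit.
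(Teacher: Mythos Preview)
The paper does not prove this statement; it is quoted as a corollary from Br\"and\'en's survey \cite{branden2015unimodality} and used as a black box. Your derivation of the sufficiency direction by specializing Theorem~\ref{thm:interlacing} to the constant matrix $G_{ij}(x)=a_{ij}$ is exactly why the result is labeled a ``corollary'' of that theorem, and your root comparison reducing condition~(ii) to the $2\times 2$ minor inequality is correct.

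Two remarks. First, your perturbation step is almost certainly unnecessary: the paper itself applies Theorem~\ref{thm:interlacing} elsewhere (see Lemma~\ref{lem:two-column-monotonicity-interlacing} and the proof of Theorem~\ref{thm:main2-intro}) to matrices whose entries may vanish, so ``positive coefficients'' in condition~(i) should be read as ``nonnegative coefficients,'' consistent with the convention that the zero polynomial interlaces everything. If you do insist on the perturbation, the density of strictly positive $\mathrm{TN}_2$ matrices in $\mathrm{TN}_2$ is a nontrivial fact (it follows from Whitney's classical density theorem for totally positive matrices), and your justification ``the interior is dense in the closed cone'' is not quite an argument, since $\mathrm{TN}_2$ is not convex. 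Second, your converse argument via the test family $f_{j_1}=1+x$, $f_{j_2}=(1+x)^2$, $f_k=0$ otherwise is clean and correct: after factoring out $(1+x)$ from $(Af)_{i_1}$ and $(Af)_{i_2}$, the interlacing condition on the remaining linear factors is exactly the minor inequality, and the degenerate cases where some $a_{i j_2}$ vanishes are easily dispatched since the minor is then automatically nonnegative or the degree drop itself witnesses failure of interlacing.
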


Finally, we have the following two results stating that the $\gamma$-transformation preserves real-rootedness and interlacing polynomials. 
\begin{lemma}[\cite{branden2004sign}, \cite{gal2005real}]\label[lemma]{lem:gamma-transfo-RR}
    If $f(x)$ is a symmetric polynomial then the following are equivalent:
    \begin{enumerate}[\normalfont(i)]
        \item All the roots of $f(x)$ are real and nonpositive.
        \item All the roots of $\gamma(f; y)$ are real and nonpositive.
    \end{enumerate}
\end{lemma}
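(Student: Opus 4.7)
The plan is to exploit the substitution identity
\[
f(x) \;=\; (1+x)^{d}\,\gamma(f;y)\Big|_{y = x/(1+x)^2},
\]
which follows immediately from the defining equation $f(x) = \sum_i \gamma_i\,x^i(1+x)^{d-2i}$ by dividing through by $(1+x)^d$. The key geometric fact is that the rational map $x \mapsto y = x/(1+x)^2$ restricts to a $2$-to-$1$ surjection from $(-\infty,0]\setminus\{-1\}$ onto $(-\infty,0]$: the two preimages of any $y\leqslant 0$ are the roots of $yx^2+(2y-1)x+y=0$, whose product is $1$ and whose sum $1/y - 2$ is nonpositive, so both preimages are real and nonpositive (and reciprocal to each other).

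For (i) $\Rightarrow$ (ii), I would use the reciprocal symmetry $f(x) = x^d f(1/x)$ to group the nonpositive real roots of $f$ into reciprocal pairs and factor
\[
f(x) \;=\; c\,(1+x)^{m}\prod_{j=1}^{k}(x-r_j)(x-1/r_j),
\]
with $r_j<0$, $r_j\neq-1$, and $m+2k=d$. The elementary identity $(x-r_j)(x-1/r_j) = (1+x)^2 - (2+r_j+1/r_j)\,x$ combined with the substitution identity above yields
\[
\gamma(f;y) \;=\; c\prod_{j=1}^{k}\bigl(1 - (2+r_j+1/r_j)\,y\bigr),
\]
whose roots $y_j = r_j/(1+r_j)^2$ are manifestly real and nonpositive.

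For (ii) $\Rightarrow$ (i), let $k = \deg\gamma(f;y)$ and factor $\gamma(f;y) = c\prod_{j=1}^k(y-y_j)$ with each $y_j\leqslant 0$. The substitution identity then gives
\[
f(x) \;=\; c\,(1+x)^{d-2k}\prod_{j=1}^{k}\bigl(x - y_j(1+x)^2\bigr).
\]
Each factor $-y_jx^2 + (1-2y_j)x - y_j$ with $y_j<0$ has discriminant $1-4y_j>0$, product of roots $1$, and sum of roots $1/y_j - 2 < 0$, hence two negative real roots; a factor with $y_j=0$ degenerates to $x$, contributing a root at $0$. The remaining factor $(1+x)^{d-2k}$ contributes roots at $-1$, so every root of $f$ is real and nonpositive.

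There is no serious obstacle in this argument. The only mild subtlety is that the formal degree $d$ (implicit in the symmetry convention, which allows $a_d=0$) may exceed the actual degree of $f$, or $\deg\gamma(f;y)$ may be strictly less than $\lfloor d/2\rfloor$; but the factors $(1+x)^m$ and $(1+x)^{d-2k}$ absorb this discrepancy uniformly, so no additional case analysis is required.
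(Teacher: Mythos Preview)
The paper does not prove this lemma; it is stated with citations to Br\"and\'en and Gal and used as a black box. Your argument is the standard one underlying those references and is essentially correct.

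One small point: in direction (i)$\Rightarrow$(ii) your factorization $f(x)=c(1+x)^m\prod_j(x-r_j)(x-1/r_j)$ with $r_j<0$, $r_j\neq-1$ tacitly assumes $f(0)\neq 0$. If $0$ is a root of $f$ with multiplicity $m_0$, then by palindromicity $a_0=\cdots=a_{m_0-1}=0$ forces $a_d=\cdots=a_{d-m_0+1}=0$, so $f(x)=x^{m_0}g(x)$ with $g$ palindromic of center $(d-2m_0)/2$ and $g(0)\neq 0$; correspondingly $\gamma(f;y)=y^{m_0}\gamma(g;y)$, and your argument applies to $g$. Your closing remark about ``the factors $(1+x)^m$ and $(1+x)^{d-2k}$ absorb this discrepancy'' does not quite cover this case, since a root at $0$ is absorbed by a factor of $x$ (equivalently $y$), not by $(1+x)$. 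With that adjustment the proof is complete.
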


\begin{lemma}[{\cite[Proposition 2.5]{hoster-stump}}]\label[lemma]{lem:gamma-transform-preserves-interlacing}
    If $f(x), g(x)$ are palindromic polynomials with degree $\deg(g) = \deg(f)+1$, then the following are equivalent: 
    \begin{enumerate}[\normalfont(i)]
        \item $f(x) \preceq g(x)$. 
        \item $\gamma(f; y) \preceq \gamma(g; y)$. 
    \end{enumerate}
\end{lemma}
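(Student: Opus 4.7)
The proof rests on the substitution $y = x/(1+x)^2$, which, combined with the defining identity
\[
f(x) = (1+x)^d\,\gamma\bigl(f;\, x/(1+x)^2\bigr)
\]
for any palindromic polynomial $f$ of degree $d$, allows one to transfer root information through the $\gamma$-transform. The map $x \mapsto x/(1+x)^2$ is a strictly decreasing bijection from $(-\infty,-1)$ onto $(-\infty,0)$, and its two-to-one extension to $\{x \in \R : x \leq 0,\, x \neq -1\}$ identifies precisely the reciprocal pairs $\{r, 1/r\}$. Consequently, for a real-rooted palindromic polynomial $f$ with nonpositive roots $r_1 \leq \cdots \leq r_d$, palindromicity forces $r_i \cdot r_{d+1-i} = 1$, and the roots of $\gamma(f;y)$ are precisely the images $y_i = r_i/(1+r_i)^2$ for those indices $i \leq \lfloor d/2 \rfloor$ with $r_i < -1$ (self-paired roots at $-1$ contribute nothing).

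For the forward direction, I would assume $f \preceq g$ with $\deg f = d$ and $\deg g = d+1$, and denote the roots as $r_1 \leq \cdots \leq r_d$ and $s_1 \leq \cdots \leq s_{d+1}$. The interlacing hypothesis translates to $s_i \leq r_i \leq s_{i+1}$ for $i = 1, \ldots, d$. Palindromicity constrains the middle behavior: if $d$ is even then $g$ has odd degree and is forced to satisfy $s_{d/2+1} = -1$, while if $d$ is odd then $f$ is forced to satisfy $r_{(d+1)/2} = -1$. In either case, restricting to the ``left halves'' (those indices $i$ with $r_i, s_i \leq -1$) produces interlacing sequences of the form $s_1 \leq r_1 \leq s_2 \leq \cdots$ terminating near the middle. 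Applying the strictly decreasing substitution on $(-\infty,-1]$ reverses the order and sends these left halves bijectively to the roots of $\gamma(f;y)$ and $\gamma(g;y)$, thereby producing $\gamma(f;y) \preceq \gamma(g;y)$.

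The converse direction is obtained by running the same argument backwards, using the inverse of the substitution on the branch $x \leq -1$ together with palindromic symmetry to reconstruct the interlacing of $f$ and $g$ from that of $\gamma(f;y)$ and $\gamma(g;y)$. The main technical obstacle is the case analysis dictated by the parities of $\deg f$ and $\deg g$: because a palindromic real-rooted polynomial of odd degree is forced to have $-1$ as a root of odd multiplicity, and because the substitution has a pole at $x = -1$, one must carefully track how the degrees of $\gamma(f)$ and $\gamma(g)$ compare to $\lfloor d/2 \rfloor$ and $\lfloor (d+1)/2 \rfloor$, and verify that the interlacing behaves correctly at the middle indices. Once this bookkeeping is in place, the monotonicity of the substitution on $(-\infty,-1]$ makes each implication straightforward.
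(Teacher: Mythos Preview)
The paper does not give a proof of this lemma; it is quoted from \cite[Proposition~2.5]{hoster-stump}. Your approach via the substitution $y = x/(1+x)^2$ and the identity $f(x) = (1+x)^{\deg f}\, \gamma\bigl(f;\, x/(1+x)^2\bigr)$ is the standard one and is sound: the strict monotonicity of this map on $(-\infty, -1]$, together with the reciprocal pairing $r \leftrightarrow 1/r$ of the roots of a palindromic polynomial, transports interlacing of the left halves of the root sequences to interlacing of the $\gamma$-polynomials and back. The parity bookkeeping you flag (tracking the forced root at $-1$ and the resulting degrees of $\gamma(f)$ and $\gamma(g)$) is real but routine. One tacit assumption worth making explicit is that the roots of $f$ and $g$ are nonpositive (equivalently, the coefficients are nonnegative), so that reciprocal pairs stay in $(-\infty, 0]$ and the substitution lands in $(-\infty, 0]$; this is the regime of Lemma~\ref{lem:gamma-transfo-RR} and the only one needed in the paper.
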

Using these two lemmas above, we will prove our main theorem about real-rootedness of Chow polynomials by proving the corresponding statements for their $\gamma$-polynomials in \Cref{sec:chow-RR}. This will be done by expressing those polynomials as nonnegative sums of certain sequences of polynomials, and then proving that those sequences are interlacing by proving that they satisfy a linear recursion satisfying the hypotheses of Theorem~\ref{thm:interlacing}. 

\section{Rank-uniform labeled posets}\label{sec:rank-uniform-labeled-posets}

Let $\P$ be a bounded finite graded poset with rank function $\rk \colon \P \to \mathbb{Z}_{\geqslant 0}$. For every element $s$ and for every nonnegative integer $k$, we write $W_{k}(s)$ for the number of elements of rank $k + \rk s$ above $s$, called the \emph{$k$-th Whitney number above $s$}. The poset $\P$ is \textit{rank-uniform} if for all $s, t \in \P$ of the same rank and for every positive integer $k$, the number of elements of rank $k + \rk s$ above $s$ is equal to the number of elements of rank $k + \rk s$ above $t$: 
\begin{equation*}
    W_k(s) = W_{k}(t).
\end{equation*}

In this section we will introduce an edge-labeled version of the above definition, which applies to a poset equipped with an EL-labeling. We start with some terminology related to EL-labelings. 

\begin{definition}[Width, index, descent]Let $(\P, \lambda)$ be an EL-labeled poset and let $s \in \P$. Denote by $\lambda_1 \vartriangleleft \cdots \vartriangleleft \lambda_{\ell}$ the labels of the covering relations above $s$.
\begin{itemize}[leftmargin=23pt,itemsep=-1pt]
    \item The \emph{width above $s$} is the function 
    \[
    \omega_s \colon \mathbb{Z}_{>0} \to \Z_{\geqslant 0}, \quad \omega_s(i) \coloneqq \abs{ \: \{t \in \P \mid s \lessdot t\: , \: \lambda(s, t) = \lambda_i \: \} \: }. 
    \] 
    In other words, the width above $s$ at index $i$ records the number of covers of $s$ labeled by $\lambda_i$ (see Figure \ref{fig:width} below). By convention, for $i > \ell$, $\omega_{s}(i) = 0$.
     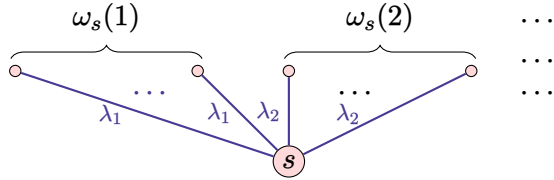
\begin{figure}[H]
         \centering
         \begin{tikzpicture}[scale=0.60, baseline=14px]
         \node[circle, draw=black, fill=red!15, inner sep=1.7pt] (A) at (0,0) {$s$};
         \node[circle, draw=black, fill=red!15, inner sep=1.5pt] (B) at (-6,2) {};
        \node[circle, draw=black, fill=red!15, inner sep=1.5pt] (C) at (-2,2) {};
         \node[circle, draw=black, fill=red!15, inner sep=1.5pt] (D) at (0,2) {};
         \node[circle, draw=black, fill=red!15, inner sep=1.5pt] (E) at (4,2) {};
        
         \draw[thick, BlueViolet] (A) -- (B) node[pos=0.5, left=6pt, scale = 0.8] {$\lambda_1$};
        \draw[thick, BlueViolet] (A) -- (C) node[midway,left, scale = 0.8] {$\lambda_1$};
         \draw[thick, BlueViolet] (A) -- (D) node[midway,left, scale = 0.8] {$\lambda_2$};
         \draw[thick, BlueViolet] (A) -- (E) node[pos=0.5, left=6pt, scale = 0.8] {$\lambda_2$};
         \node[thick, BlueViolet] (F) at (-3, 1.5) {$\cdots$};
         \node[] (G) at (1.5, 1.5) {$\cdots$};
         \node[] (H) at (5.5, 1.5) {$\cdots$};
          \node[] (I) at (5.5, 2.2) {$\cdots$}; 
          \node[] (J) at (5.5, 3.1) {$\cdots$};
         \draw [decorate, decoration={brace, amplitude=5pt, mirror=false}] ([yshift=5pt]B.north west) -- ([yshift=5pt]C.north east) node [midway, above=6pt] {$\omega_{s}(1)$};
         \draw [decorate, decoration={brace, amplitude=5pt, mirror=false}] ([yshift=5pt]D.north west) -- ([yshift=5pt]E.north east) node [midway, above=6pt] {$\omega_{s}(2)$};
     \end{tikzpicture}
         \caption{Widths above an element $s$.}
         \label{fig:width}
    \end{figure}
    
    \item Let $t$ cover $s$ in $\P$. We denote the \emph{index} of the label $\lambda(s, t)$ in $[\ell]$ by 
    \[
    \ind(s, t) \coloneqq \text{ the unique index $i \in [\ell]$ such that } \lambda_i =    \lambda(s, t). 
    \] 

    \item Let $\lambda_1' \vartriangleleft \cdots \vartriangleleft \lambda_{\ell'}'$ be the labels of the covering relations above $t$. We denote by $\Des_{s}(t)$ the set of indices $j$ in $[\ell']$ such that $\lambda'_j \vartriangleleft \lambda(s, t)$, that is, which gives rise to a descent, and we write $\des_{s}(t)$ for the cardinality of that set. 
    \begin{equation*}
        \Des_{s}(t) \coloneqq \{\: j \in [\ell']\mid \lambda_j' \vartriangleleft \lambda(s, t)\}, \quad \text{ and } \quad \des_{s}(t) \coloneqq \abs{\:\Des(s, t)\:}.  
    \end{equation*}
\end{itemize}
\end{definition}

The central object of study in this article is the following class of posets.

\begin{definition}[Rank-uniform labeled poset]
Let $(\P, \lambda)$ be an EL-labeled poset. We say that $(\P, \lambda)$ is \textit{rank-uniform} if the following holds.
\begin{itemize}
    \item (Uniform width) For all $s, s' \in \P$ of same rank and for all $i \geqslant 1$, the widths are equal: $\omega_s(i) = \omega_{s'}(i).$
    \item (Uniform descent) For all $s, s' \in \P$ of same rank and for every covering relations $s \lessdot t$ and $s' \lessdot t'$ such that $\ind(s,t) = \ind(s',t')$, the descent counts are equal: $\des_{s}(t) = \des_{s'}(t').$
\end{itemize}
\end{definition}
If $(\P, \lambda)$ is rank-uniform, for any $0 \leqslant k \leqslant \rk(\P)$ and any $i \geqslant 1$ we denote $\omega_k(i) \coloneqq \omega_s(i)$ for any $s \in \P$ of rank $k$. Similarly we denote $\des_k(i) \coloneqq \des_{s}(t)$ for any $s$ of rank $k$ and any $t \gtrdot s$ such that $\Ind(s, t) = i$. In other words, $\des_k(i)$ records the number of indices of descents immediately above rank $k$ in $\P$ which starts with the $i$-th label among all labels at rank $k$.

\begin{example}\label{ex:diamond-poset}
Consider the labeled poset $(\P, \lambda)$ whose labeled Hasse diagram is drawn in Figure \ref{fig:example-condition} below. 
    \begin{figure}[H]
    \centering
    \begin{tikzpicture}[xscale=2,every path/.style={line width=0.8pt}]
        \node[circle, draw=black, fill=red!15, inner sep=1.5pt] (A) at (0,0) {};
        \node[circle, draw=black, fill=red!15, inner sep=1.5pt] (B) at (-0.5, 1) {};
        \node[circle, draw=black, fill=red!15, inner sep=1.5pt] (C) at (0.5, 1) {};
        \node[circle, draw=black, fill=red!15, inner sep=1.5pt] (D) at (-1, 2) {};
        \node[circle, draw=black, fill=red!15, inner sep=1.5pt] (E) at (0, 2) {};
        \node[circle, draw=black, fill=red!15, inner sep=1.5pt] (F) at (1, 2) {};
        \node[circle, draw=black, fill=red!15, inner sep=1.5pt] (G) at (0, 3.5) {};
    
        \draw[thick, BlueViolet] (A) -- (B) node[pos=0.5, left, scale = 0.8] {$1$};
        \draw[thick, BlueViolet] (A) -- (C) node[pos=0.5, right, scale = 0.8] {$2$};

        \draw[thick, BlueViolet] (B) -- (D) node[pos=0.5, left, scale = 0.8] {$1$};
        \draw[thick, BlueViolet] (B) -- (E) node[pos=0.5, left, scale = 0.8] {$2$};
        \draw[thick, BlueViolet] (C) -- (E) node[pos=0.5, right, scale = 0.8] {$1$};
        \draw[thick, BlueViolet] (C) -- (F) node[pos=0.5, right, scale = 0.8] {$2$};
        \draw[thick, BlueViolet] (D) -- (G) node[pos=0.5, left=2pt, scale = 0.8] {$1$};
        \draw[thick, BlueViolet] (E) -- (G) node[pos=0.5, left, scale = 0.8] {$1$};
        \draw[thick, BlueViolet] (F) -- (G) node[pos=0.5, right = 2pt, scale = 0.8] {$1$};
    \end{tikzpicture}
    \caption{A rank-uniform labeled poset.}
    \label{fig:example-condition}
\end{figure}
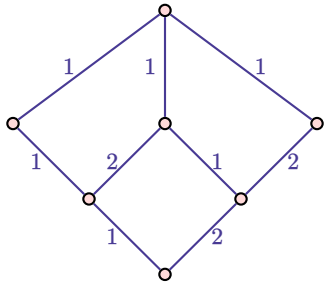
One can check that $\lambda$ is an EL-labeling. For all element $s \in \P$ of rank $0$ or $1$ we have $\omega_{s}(1) = 1$, $\omega_s(2) = 1$ and $\omega_{s}(i) = 0$ for all $i> 2.$ Moreover, for all element $s\in \P$ of rank $2$ we have $\omega_{s}(1) = 1$ and $\omega_{s}(i) = 0$ for all $i >1$ (which is more generally true for any $s$ of corank $1$ in a labeled poset). This proves that $\P$ has uniform widths. 

For all element $s \in \P$ of rank $0$ or $1$ we have $\des_s(1) = 0$, $\des_s(2) = 1$ and $\des_s(i) = 0$ for all $i \geqslant 2.$ Moreover, for all element $s\in \P$ of rank $2$ we have $\des_s(i) = 0$ for all $i \geqslant 1$ (which is more generally true for any $s$ of corank $1$ in a labeled poset). This proves that $\P$ has uniform descents as well, and so $(\P, \lambda)$ is a rank-uniform labeled poset. 
\end{example}
\begin{example}[Uniform matroids]\label{ex:uniform-matroid}
Let $n \geqslant k \geqslant 1$ be integers, and consider the poset given by all the subsets of $[n]$ of cardinality $\leqslant k-1$ or cardinality $n$, ordered by inclusion. This is the lattice of flats of the rank $k$ uniform matroid of size $n$, denoted $\L(\mathrm{U}_{k,n})$. This poset has an EL-labeling $\lambda: \Edge(\L(\mathrm{U}_{k,n})) \rightarrow [n]$ defined by 
$$ \lambda(S \lessdot T ) \coloneqq \min T\setminus S.$$ Let us check that $(\L(\mathrm{U}_{k,n}), \lambda)$ is a rank-uniform labeled poset. For all element $s \in \L(\mathrm{U}_{k,n})$ of rank $p \leqslant k-2$ and all $i\geqslant 1$, we have $\omega_{s}(i) = 1$ if $i \leqslant n-k$ and $\omega_s(i) = 0$ otherwise. This proves uniformity of widths.

For every element $s \in \L(\mathrm{U}_{k,n})$ of rank $p \leqslant k-3$ and all $i\geqslant 1,$ we have $\des_{s}(i) = i-1$ if $i \leqslant n-k$ and $\des_s(i) = 0$ otherwise. This proves uniformity of descents, since the uniformity of descents at rank $\geqslant k-2$ is trivially verified. 
\end{example}

In \cite{dowling1973class, dowling1973q}, Dowling introduced a generalization of partition lattices called Dowling geometries. We highlight this class and its EL-labeling in the example below. Dowling geometries are examples of rank-uniform supersolvable lattices, which we discuss in detail in \Cref{sec:supersolvable}.

\begin{example}[Dowling geometries]\label{ex:dowling-geo} Let $n  \geqslant 1 $ be a positive integer, and let $\G$ be a finite group. A \emph{$\G$-prepartition} of $[n]$ is a pair $(\mu,\varphi)$ consisting of a partition 
\[
0\sqcup [n] = \mu_0 \sqcup \mu_1 \sqcup \cdots \sqcup \mu_k,
\] with $0 \in \mu_0$, and a labeling function 
\[
\varphi \colon \mu_1 \sqcup \cdots \sqcup \mu_k \rightarrow \G.
\] The unique block $\mu_0$ containing $0$ is called the zero block and the others the non-zero blocks. $\G$-prepartitions are ordered by setting $(\mu, \varphi) \geqslant(\mu', \varphi')$ if every non-zero block of $\mu$ is a union of non-zero blocks of $\mu'$ and the labelings coincide. Two $\G$-prepartitions $(\mu, \varphi)$ and $(\mu', \varphi')$ are said to be equivalent if $\mu = \mu'$ and for each block $\mu_i = \mu'_i$ there exists an element $g\in \G$ such that we have the equality $\varphi_{|\mu_i} = g\cdot \varphi'_{|\mu'_i}$.  A $\G$-partition is an equivalence class of $\G$-partitions, and the Dowling geometry $\QnG$ is the set of $\G$-partitions of $[n]$ with the order induced by $\leqslant$. For instance, if $\G$ is the trivial group then a $\G$-partition is simply a partition of $0 \sqcup [n]$, and $\QnG$ is the $(n+1)$-th partition lattice $\Pi_{n+1}.$ More generally, if $\G$ is a finite cyclic group then $\QnG$ is the intersection lattice of the arrangement consisting of the hyperplanes defined by equations of the form 
\begin{align*}
\{ \: x_i = g \: x_j \mid 1 \leqslant i < j \leqslant n, g \in G \: \} \cup \{\: x_k = 0 \mid 1 \leqslant k \leqslant n \: \}. 
\end{align*}
For instance, if $\G = \Z/2\Z$, we get the $n$-th type B braid arrangement. 

Consider the poset $\Pi^{\textrm{B}}_3$ whose Hasse diagram is drawn in Figure \ref{tikz:pi-3-B}. This is the intersection lattice of the type B braid arrangement of rank $3$, defined by the hyperplanes \[
\bigg\{ H_{ij}^{\pm} \coloneqq V(x_i \pm x_j) \mid 1 \leqslant i < j \leqslant 3 \bigg\} \cup \bigg\{ H_k \coloneqq V(x_k) \mid 1 \leqslant k \leqslant 3 \bigg\}. 
\]  More concretely, the poset $\Pi^{\textrm{B}}_3$ consists of all the intersections of these hyperplanes, ordered by reverse inclusion, and ranked by codimension: 
\begin{enumerate}[(i)]
    \item In rank $0$, the only intersection space is the full space $\C^3$. 
    \item In rank $1$, the intersection spaces are the $9$ hyperplanes of the form $H_i$ and $H_{jk}^{\pm}$. 
    \item In rank $2$, the intersection spaces are $13$ lines: 
    \begin{itemize}[--]
    \item the $3$ coordinate lines: $H_{i} \cap H_j$.  
    \item the $6$ lines that are the intersections of one coordinate hyperplane with one of the diagonals: $H_k \cap H_{ij}^{\pm}$,
    \item the $4$ lines that are the intersections of pairs of diagonals $H_{ij}^{\pm} \cap H_{jk}^{\pm}$. 
    \end{itemize}
    \item In rank $3$, the only intersection space is the origin. 
\end{enumerate}

These intersections are encoded using signed partitions as follows: the zero block contains indices $i$ with $x_i = 0$ and the nonzero block contains indices whose coordinates are equal up to a common sign shift. For example, the block $03$ represents the hyperplane given by $x_3 = 0$, and the block $1 \overline{2}$ represents the hyperplane given by $x_1 = - x_2$. Therefore, $03\mid 1\overline{2}$ corresponds to the intersection space $H_{3} \cap H_{12}^{-}$. 
\begin{figure}[h!]
\begin{center}
\begin{tikzpicture}[scale=0.55, transform shape, node distance=16mm and 20mm, xshift=-2cm,
  bubble/.style={
    draw,
    rounded corners=1mm,
    text width=16mm,
    minimum height=7mm,
    align=center,
    fill=red!15,
    font = \normalfont,
  },every path/.style={line width=0.8pt}]
  \node[bubble] (bot) at (0, 0) {$\zero$};
  \node[bubble] (rk1-1) at (-8, 2) {$01 \mid 2 \mid 3$};
  \node[bubble] (rk1-2) at (-6, 2) {$02 \mid 1\mid 3$};
  \node[bubble] (rk1-3) at (-4, 2) {$03 \mid 1 \mid 2$};
  \node[bubble] (rk1-4) at (-2, 2) {$0 \mid 12 \mid 3$};
  \node[bubble] (rk1-5) at (0, 2) {$0 \mid 13 \mid 2$};
  \node[bubble] (rk1-6) at (2, 2) {$0 \mid 23 \mid 1$};
  \node[bubble] (rk1-7) at (4, 2) {$0 \mid 3 \mid 1\overline{2}$};
  \node[bubble] (rk1-8) at (6, 2) {$0 \mid 2 \mid 1\overline{3} $};
  \node[bubble] (rk1-9) at (8, 2) {$0 \mid 3 \mid 1\overline{2}$};
  \node[bubble] (rk2-1) at (-12, 5) {$013 \mid 2$};
  \node[bubble] (rk2-2) at (-10, 5) {$012 \mid 3$};
  \node[bubble] (rk2-3) at (-8, 5) {$023 \mid 1$};
  \node[bubble] (rk2-4) at (-6, 5) {$01 \mid 23$};
  \node[bubble] (rk2-5) at (-4, 5) {$02 \mid 13$};
  \node[bubble] (rk2-6) at (-2, 5) {$03 \mid 12$};
  \node[bubble] (rk2-7) at (0, 5) {$01 \mid 2\overline{3}$};
  \node[bubble] (rk2-8) at (2, 5) {$02 \mid 1\overline{3} $};
  \node[bubble] (rk2-9) at (4, 5) {$03 \mid 1\overline{2}$};
  \node[bubble] (rk2-10) at (6, 5) {$0 \mid 123$};
  \node[bubble] (rk2-11) at (8, 5) {$0 \mid 12\overline{3}$};
  \node[bubble] (rk2-12) at (10, 5) {$0 \mid 1\overline{2}3 $};
  \node[bubble] (rk2-13) at (12, 5) {$0 \mid 1\overline{23}$};

  \node[bubble] (top) at (0, 7) {$0123$};

  \begin{scope}[on background layer]
  \draw[thick, BlueViolet] (bot) edge[bend left=8]  (rk1-1);
  \draw[thick, BlueViolet] (bot) edge[bend left=6]  (rk1-2);
  \draw[thick, BlueViolet] (bot) edge[bend left=4]  (rk1-3);
  \draw[thick, BlueViolet] (bot) edge[bend left=2] (rk1-4);
  \draw[thick, BlueViolet] (bot) -- (rk1-5);
  \draw[thick, BlueViolet] (bot) edge[bend right=2] (rk1-6);
  \draw[thick, BlueViolet] (bot) edge[bend right=4] (rk1-7);
  \draw[thick, BlueViolet] (bot) edge[bend right=6] (rk1-8);
  \draw[thick, BlueViolet] (bot) edge[bend right=8] (rk1-9);

  \draw[thick, BlueViolet] (rk1-1) edge[bend left=20] (rk2-1);
  \draw[thick, BlueViolet] (rk1-1) --(rk2-2);
  \draw[thick, BlueViolet] (rk1-1) -- (rk2-4);
  \draw[thick, BlueViolet] (rk1-1) --(rk2-7);

  \draw[thick, BlueViolet] (rk1-2) -- (rk2-2);
  \draw[thick, BlueViolet] (rk1-2) -- (rk2-3);
  \draw[thick, BlueViolet] (rk1-2) -- (rk2-5);
  \draw[thick, BlueViolet] (rk1-2) -- (rk2-8);

  \draw[thick, BlueViolet] (rk1-3) -- (rk2-1);
  \draw[thick, BlueViolet] (rk1-3) -- (rk2-3);
  \draw[thick, BlueViolet] (rk1-3) -- (rk2-6);
  \draw[thick, BlueViolet] (rk1-3) -- (rk2-9);

  \draw[thick, BlueViolet] (rk1-4) -- (rk2-2);
  \draw[thick, BlueViolet] (rk1-4) -- (rk2-6);
  \draw[thick, BlueViolet] (rk1-4) -- (rk2-10);
  \draw[thick, BlueViolet] (rk1-4) -- (rk2-11);

  \draw[thick, BlueViolet] (rk1-5) -- (rk2-1);
  \draw[thick, BlueViolet] (rk1-5) -- (rk2-5);
  \draw[thick, BlueViolet] (rk1-5) -- (rk2-10);
  \draw[thick, BlueViolet] (rk1-5) -- (rk2-12);

  \draw[thick, BlueViolet] (rk1-6) -- (rk2-4);
  \draw[thick, BlueViolet] (rk1-6) -- (rk2-3);
  \draw[thick, BlueViolet] (rk1-6) -- (rk2-10);
  \draw[thick, BlueViolet] (rk1-6) -- (rk2-13);

  \draw[thick, BlueViolet] (rk1-7) -- (rk2-9);
  \draw[thick, BlueViolet] (rk1-7) -- (rk2-3);
  \draw[thick, BlueViolet] (rk1-7) -- (rk2-12);
  \draw[thick, BlueViolet] (rk1-7) -- (rk2-13);

  \draw[thick, BlueViolet] (rk1-8) -- (rk2-5);
  \draw[thick, BlueViolet] (rk1-8) -- (rk2-1);
  \draw[thick, BlueViolet] (rk1-8) -- (rk2-11);
  \draw[thick, BlueViolet] (rk1-8) -- (rk2-13);

  \draw[thick, BlueViolet] (rk1-9) -- (rk2-6);
  \draw[thick, BlueViolet] (rk1-9) -- (rk2-2);
  \draw[thick, BlueViolet] (rk1-9) -- (rk2-12);
  \draw[thick, BlueViolet] (rk1-9) edge[bend right=20] (rk2-13);

  \draw[thick, BlueViolet] (rk2-1) edge[bend left=18] (top);
  \draw[thick, BlueViolet] (rk2-2) edge[bend left=15] (top);
  \draw[thick, BlueViolet] (rk2-3) edge[bend left=12] (top);
  \draw[thick, BlueViolet] (rk2-4) edge[bend left=9] (top);
  \draw[thick, BlueViolet] (rk2-5) edge[bend left=6] (top);
  \draw[thick, BlueViolet] (rk2-6) edge[bend left=3] (top);
  \draw[thick, BlueViolet] (rk2-7) -- (top);
  \draw[thick, BlueViolet] (rk2-8) edge[bend right=3] (top);
  \draw[thick, BlueViolet] (rk2-9) edge[bend right=6] (top);
  \draw[thick, BlueViolet] (rk2-10) edge[bend right=9] (top);
  \draw[thick, BlueViolet] (rk2-11) edge[bend right=12] (top);
  \draw[thick, BlueViolet] (rk2-12) edge[bend right=15] (top);
  \draw[thick, BlueViolet] (rk2-13) edge[bend right=18] (top);
  \end{scope}
\end{tikzpicture}
\end{center}
\label{tikz:pi-3-B}
\caption{The geometric lattice $\mathrm{Q}_3(\Z_2) = \Pi^{\textrm{B}}_{3}$.}
\end{figure}

In \cite{simion1995q}, Rodica Simion introduced an EL-labeling on Dowling geometries, which endows Dowling lattices with the structure of rank-uniform labeled posets. Let $\overline{(\mu, \varphi_{\mu})} \lessdot \overline{(\mu', \varphi_{\mu'}})$ be a covering relation in $\QnG$, which is a merge of two $\G$-blocks in $(\mu, \varphi_{\mu})$. Then 
\[
\lambda\left(\overline{(\mu, \varphi_{\mu})}, \overline{(\mu', \varphi_{\mu'})}\right) \coloneqq \begin{cases}
    \min \mu_j & \text{if $\overline{(\mu_i,\varphi_i)}, \overline{(\mu_j,\varphi_j)}$ are merging, with $\min \mu_{i} < \min \mu_{j}$}\\
    \min \mu_{i} & \text{if  $\overline{(\mu_i,\varphi_i)}, \mu_0$ are merging.}
\end{cases}
\]
We compute the widths and descents using this labeling for $\Pi_{2}^{\textrm{B}}$; see \Cref{fig:pi-2-B}. For the unique element $\zero$ of rank $0$, $\omega_0(1) = \omega_{\zero}(1) = 1$, $\omega_0(2) = \omega_{\zero}(2) = 3$, and $\des_{0}(1) = 0$, $\des_{0}(2) = 1$. For any $s$ of rank $1$, $\omega_{1} = \omega_{s}(1) = 1$.  

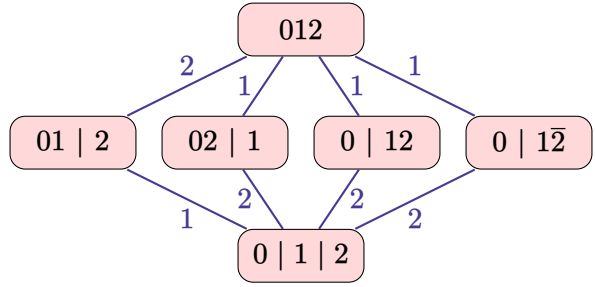
\begin{figure}
\begin{tikzpicture}[node distance=16mm and 20mm,
  poset/.style={
    draw,
    rounded corners=2mm,
    text width=14mm,
    minimum height=7mm,
    align=center,
    fill=red!15,
  }]
  \node[poset] (bot) at (0, 0) {$0 \mid 1 \mid 2$};
  \node[poset] (mid1) at (-3, 1.5) {$01 \mid 2$};
  \node[poset] (mid2) at (-1, 1.5) {$02 \mid 1$};
  \node[poset] (mid3) at (1, 1.5) {$0 \mid 12$};
  \node[poset] (mid4) at (3, 1.5) {$0 \mid 1\overline{2}$};
  \node[poset] (top) at (0, 3) {$012$};

  \draw[thick, BlueViolet] (bot) -- node[label, midway, below] {$1$} (mid1);
  \draw[thick, BlueViolet] (bot) -- node[label, midway, left] {$2$} (mid2);
  \draw[thick, BlueViolet] (bot) -- node[label, midway, right] {$2$} (mid3);
  \draw[thick, BlueViolet] (bot) -- node[label, midway, below] {$2$} (mid4);
  \draw[thick, BlueViolet] (mid1) -- node[label, midway, above] {$2$} (top);
  \draw[thick, BlueViolet] (mid2) -- node[label, midway, left] {$1$} (top);
  \draw[thick, BlueViolet] (mid3) --  node[label, midway, right] {$1$} (top);
  \draw[thick, BlueViolet] (mid4) --  node[label, midway, above] {$1$} (top);
\end{tikzpicture}
\caption{Simion's EL-labeling on $\Pi_2^{\textrm{B}}$. }
\label{fig:pi-2-B}
\end{figure}
\end{example}

We have the following lemma relating this labeled notion of uniformity to the classical, non-labeled notion of rank-uniformity for posets. 

\begin{lemma}\label{lem:el-rank-upper-implies-rank-upper}
If an EL-labeled poset $(\P, \lambda)$ is rank-uniform, then $\P$ is rank-uniform. 
\end{lemma}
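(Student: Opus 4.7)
The plan is to use the defining property of the EL-labeling to convert the problem into a chain-counting problem that the uniform width and uniform descent hypotheses directly control. By the EL-property applied to each interval $[s,u]$, for every $u \geqslant s$ there is a \emph{unique} saturated chain from $s$ to $u$ with weakly increasing labels; hence $W_k(s)$ equals the number of weakly increasing saturated chains of length $k$ starting at $s$.

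I will prove by induction on $k$ a strengthening: for every $r \geqslant 0$, $k \geqslant 0$, and $j \geqslant 1$, there is a quantity $F_k(r,j)$ such that for every $s \in \P$ of rank $r$, the number $f_k(s,j)$ of weakly increasing saturated chains of length $k$ starting at $s$ whose first edge has label-index at least $j$ (among the labels above $s$) equals $F_k(r,j)$. The case $k=0$ is trivial ($f_0(s,j)=1$). For $k\geqslant 1$, splitting by the first step gives
\[
f_k(s, j) \;=\; \sum_{i \geqslant j}\; \sum_{\substack{t\,\gtrdot\, s \\ \ind(s,t)=i}} f_{k-1}\bigl(t,\,\des_s(t)+1\bigr),
\]
where the shift $\des_s(t)+1$ arises because $\Des_s(t)$, being the set of indices of labels above $t$ strictly below $\lambda(s,t)$, equals $\{1,\ldots,\des_s(t)\}$ (the labels above $t$ are listed in increasing order), so the next edge must have label-index $>\des_s(t)$ above $t$ in order to continue weakly increasing.

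Under the hypotheses of uniform width and uniform descent, the inner sum has cardinality $\omega_r(i)$, and for each $t$ in that sum the shift equals $d_r(i)+1$, where $d_r(i)$ is the common value of $\des_s(t)$ for any $s$ of rank $r$ and any $t\gtrdot s$ with $\ind(s,t)=i$. Combined with the inductive hypothesis this yields
\[
f_k(s,j) \;=\; \sum_{i \geqslant j} \omega_r(i)\,F_{k-1}\bigl(r+1,\,d_r(i)+1\bigr),
\]
which depends only on $r$ and $j$; we take this common value as $F_k(r,j)$. Setting $j=1$ gives $W_k(s) = F_k(\rk(s),1)$, proving rank-uniformity of $\P$.

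The only mild subtlety I anticipate is introducing the auxiliary parameter $j$: the naive induction on $W_k$ alone does not close, because the continuations available from a cover $t\gtrdot s$ are constrained by the label $\lambda(s,t)$, whose position depends on the choice of the first step. Carrying the extra index $j$ absorbs this constraint, and the uniform descent hypothesis is precisely what guarantees that the shift needed at each step is a function of $\rk(s)$ and $\ind(s,t)$ only.
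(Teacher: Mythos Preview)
Your proof is correct and follows essentially the same strategy as the paper: use the EL property to identify $W_k(s)$ with the number of weakly increasing saturated chains of length $k$ from $s$, then strengthen the induction by tracking a label-index parameter so that uniform width and uniform descent close the recursion. The only cosmetic difference is that the paper refines by the index of the \emph{last} label (defining $W_k^i(s)$ and extending chains at the top), whereas you refine by a threshold on the index of the \emph{first} label and peel off the first step; these are dual bookkeeping choices and neither buys anything the other does not.
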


\begin{proof}
We prove that $W_k(s)$ only depends on the rank of $s$ and $k$. Since $\lambda$ is an EL-labeling of $\P$, every element $t$ of rank $k$ above $s$ corresponds to a unique maximal chain from $s$ to $t$ of length $k$ 
\[
s = s_0 \lessdot s_1 \lessdot \cdots \lessdot s_k = t
\] with weakly increasing labels $\lambda(s_0, s_1) \trianglelefteqslant \cdots \trianglelefteqslant \lambda(s_{k-1}, s_k)$. 
Therefore, 
\[
W_k(s) = \abs{ \{ s = s_0 \lessdot s_1 \lessdot \cdots \lessdot s_k \, | \, \lambda(s_{\medbullet}, s_{\medbullet+1}) \textrm{ weakly increasing}\} \: }. 
\]
One can refine the quantity on the right by the index of the last label $\lambda(s_{k-1}, s_{k})$ in the total order $\vartriangleleft$ as follows: For every $i$, we write 
\[
W_{k}^i(s) \coloneqq \abs{\: \{ s = s_0 \lessdot s_1 \lessdot \cdots \lessdot s_k \, | \, \lambda(s_{\medbullet}, s_{\medbullet+1}) \textrm{ weakly increasing}, \Ind(s_{k-1}, s_{k}) = i\}\:},  
\]
and prove by induction on $k$ that $W_k^i(s) = W_k^i(s')$ as soon as $\rk s = \rk s'$. 

The base case is when $k = \rk(s) + 1 $. The number of covers of $s$ whose covering relation is labeled by the $i$-th label is the width number $w_s(i)$ which only depends on $\rk s$ and $i$ by definition of a rank-uniform labeled poset. Let $k \geqslant \rk(s) + 1$ and assume that $W_j^i(s)$ on depends on $\rk s$ for all $j \leqslant k$ and all $i$. Then, we can refine the set 
\[
\{ s = s_0 \lessdot s_1 \lessdot \cdots \lessdot s_{k+1} \, | \, \lambda(s_{\medbullet}, s_{\medbullet+1}) \textrm{ weakly increasing}, \Ind(s_{k}, s_{k+1}) = i\}
\] by the index of the penultimate label $\Ind(s_{k-1}, s_k)$. This gives the following recursive formula 
\[
W_{k+1}^{i}(s) = \sum_{j \geqslant 1} \: W_{k}^j(s) \: \cdot \left(\sum_{i \geqslant \des_k(j)}\omega_k(i) \right). 
\]
Since $(\P, \lambda)$ has uniform widths and uniform descent counts, this shows that $W_{k+1}^{i}(s) = W_{k+1}^{i}(s')$ if $\rk s = \rk s'$, which concludes the inductive step. 
\end{proof}

An additional condition on a labeled poset $(\P, \lambda)$ that will play a crucial role in this article is the \emph{monotonicity} of the descent counts at a given rank, as a function of the label indices. 

\begin{definition}[Monotonic labeled poset]
Let $(\P, \lambda)$ be a rank-uniform labeled poset, and for all $0 \leqslant k \leqslant \rk(\P)-1$ denote by $\ell_k$ the number of distinct labels above any element of rank $k$. The labeled poset $(\P, \lambda)$ is \textit{monotonic} if for all $0 \leqslant k \leqslant \rk(\P)-1$ the function $\des_k(\medbullet)$ is weakly increasing on $[\ell_k]$. A poset $\P$ admitting an EL-labeling $\lambda$ such that $(\P, \lambda)$ is a monotonic rank-uniform labeled poset is called \textit{uniformly monotonically edge-lexicographically shellable}, or \textit{UMEL-shellable} for short.
\end{definition}
In other words, a rank-uniform labeled poset $(\P, \lambda)$ is monotonic if at any element $s$ in $\P$, the larger the label above $s$, the more descents we observe from this label. 
\begin{example}
The rank-uniform labeled posets considered in Example~\ref{ex:diamond-poset}, Example~\ref{ex:uniform-matroid} and Example~\ref{ex:dowling-geo} can be verified to be monotonic via routine checks. 
\end{example}
In the next two sections, we will show that the $h$-polynomial and the $\gamma$-polynomial of a UMEL-shellable poset are real-rooted. We will do so by refining the $h$-polynomial and $\gamma$-polynomials as a positive sum of polynomials using face and flag enumeration. These refinement polynomials will satisfy a linear recursion with transition matrix as in Lemma~\ref{lem:two-column-monotonicity-interlacing}. In particular, the monotonicity condition on the EL-labeling ensures the Wronskian nonnegativity condition, which is exactly the condition guaranteeing that the transition matrix preserves interlacing. Let $\mathbf{A}(x)$ be the transition matrix between two families of polynomials $\mathbf{g}_m(x)$ and $\mathbf{h}_n(x)$
\[
\mathbf{h}_n(x) = \mathbf{A}(x) \cdot \mathbf{g}_{m}(x) = \begin{pmatrix} a_{ij}(x) \end{pmatrix} \cdot \mathbf{g}_m(x)
\] such that $\mathbf{A}$ is an $n$-by-$m$ matrix with nonnegative polynomial entries. By \Cref{thm:interlacing}, the requirement that two columns $j < j'$ satisfy, for all $p, q \in \R_{> 0}$, 
\[
f_{j'} \preceq f_{j}
\] is equivalent to the nonnegativity of the corresponding Wronskian over the real numbers
\[
\textrm{W}[f_j, f_{j'}] = \det \begin{pmatrix} 
f_{j}' & f_{j'}' \\
f_{j}  & f_{j'} 
\end{pmatrix} \geqslant 0. 
\]
The monotonicity condition on the $\des( \medbullet)$ ensures the nonnegativity of values of Wronskians at all real numbers for all column pairs. Consequently, $\mathbf{A}(x)$ preserves interlacing sequences, and the refinement polynomials form an interlacing sequence by induction, Lemma~\ref{lem:two-column-monotonicity-interlacing}, and Theorem~\ref{thm:interlacing}.

\section{Chain polynomials and \texorpdfstring{$h$}{h}-polynomials are real-rooted
}
\label{sec:h-RR}
In this section, we prove that the chain polynomial and the $h$-polynomial of the order complex of a UMEL-shellable poset has only real and nonpositive zeros. This result unifies and extends several previously known instances of real-rootedness, including those arising from Dowling geometries, projective geometries, and uniform matroids. The definition of the interlacing family introduced here also prepare the ground for the recursive framework that we will use in \Cref{sec:chow-RR}.

\newtheorem*{thm:intro2}{Theorem~\ref{thm:main2-intro}}
\begin{thm:intro2}
    {\itshape
Let $\P$ be a UMEL-shellable poset. The $h$-polynomial of the order complex $\Delta(\P)$ of $\P$ only has real and nonpositive roots.} 
\end{thm:intro2}

\begin{proof}
Let $\lambda$ be an EL-labeling of $\P$ such that $(\P, \lambda)$ is a monotonic rank-uniform labeled poset. Let $s$ be a nonmaximal element in $\P$ of corank $k$ and let $\lambda_1  \vartriangleleft \cdots \vartriangleleft \lambda_{\ell}$ denote the labels immediately above $s$. We write $\calC_s$ for the maximal chains of the interval $[s, \un]$: 
\[
\calC_{s} \coloneqq \{ s = s_0 \lessdot s_1 \lessdot \cdots \lessdot s_k = \un \}
\] 
We refine this set of maximal chains by the index of the first covering relation. For any $1 \leqslant i \leqslant \ell$, we write $\calC_{s}(i)$ for the maximal chains of the interval $[s, \un]$ such that the first covering relation is labeled by $\lambda_i$: 
\[
\calC_{s}(i) \coloneqq \{\: s = s_0 \lessdot s_1 \lessdot \cdots \lessdot s_k = \un_{s} \mid \lambda(s_0, s_1) = \lambda_i\}. 
\] 

Let us denote 
\[
h_s(y) \coloneqq \sum_{C \in \calC_s} y^{\des(\lambda(C))} = h(\Delta([s, \un])),
\] 
where the second equality comes from Lemma \ref{lem:beta-counts-des} and Equation \eqref{eq:h-from-flag-h}. We use the above refinement of $\calC_{s}$ to refine this polynomial. For any $1 \leqslant i \leqslant \ell$, we write $h_{s}^{i}(y)$ for the generating polynomial of maximal chains with first label index $i$ in $[s, \un]$ with the descent statistics: 
\[
h_{s}^i(y) \coloneqq \sum_{C \in \calC_s(i)} y^{\des(\lambda(C))}. 
\] 

By rank-uniformity of $(\P, \lambda)$, the sequence of polynomials
\[
\mathbf{h}_{s}(y) \coloneqq (h_{s}^{1}(y)\: , \: \ldots \:, \: h_{s}^{\ell}(y))
\] only depends on the corank $k$ of $s$, and so we instead write
\[
\mathbf{h}_{k}(y) = (h^1_{k}(y)\: , \: \ldots \:, \: h^{\ell}_{k}(y)).
\]
If $n$ is the rank of $\P$, we have 
\[
h(\Delta(\P), y) = \sum_{i\geqslant 1}^{\ell} h^i_{n}(y).
\] 
We will prove that $\mathbf{h}_k(y)$ is interlacing for all $1 \leqslant k \leqslant n$, by induction on the corank $k$. The result will then follow from Theorem \ref{thm:sum-of-interlacing-is-RR}.

For the base case, we have that $\mathbf{h}_1(y) = (1)$, which is an interlacing family. For the inductive hypothesis, we assume that $\mathbf{h}_k(y)$ is an interlacing family for some $k < n$, and let $s$ be any element of $\P$ of corank $k+1$. For every $1 \leqslant i \leqslant \ell$, the number of covers of $s$ with labels $\lambda_i$ is the width of $s$ at index $i$
\[
\omega_s(i) = \text{the number of $t$ in $[s,\un]$ with $s \lessdot t$ such that $\lambda(s, t) = \lambda_i$}. 
\] 
For each $i$, recall that $\des_s(i)$ records the number of indices in the labels above covers of $s$ labeled by $\lambda_i$ that would give rise to a descent. Therefore, a cover of $s$ labeled by $\lambda_i$ followed by $\lambda_j'$ with $j \leqslant \des_s(i)$ gives rise to a descent $\lambda_i \vartriangleright \lambda_j'$, and a cover of $s$ labeled by $\lambda_i$ followed by $\lambda_j'$ with $j > \des_s(i)$ gives rise to an ascent $\lambda_i \trianglelefteqslant \lambda_j'$. Therefore, we have the following recursive formula
\begin{equation}\label{eq:h-recursion}
h_{s}^{i}(y) = \omega_{s}(i) \left(\sum_{j \leqslant \des_s(i)} \: y \: \cdot \: h_k^{j}(y) + \sum_{j > \des_s(i)} h_k^j(y) \right).
\end{equation}
By the uniform-width condition on $(\P, \lambda)$, the statistics $\omega_s(i)$ only depends on the corank of $s$. Therefore, 
\[
h_{k+1}^{i}(y) = \omega_{k+1}(i) \left(\sum_{j \leqslant \des_{k+1}(i)} \: y \: \cdot \: h_k^{j}(y) + \sum_{j > \des_{k+1}(i)} h_k^j(y) \right).
\]
We can write this recursion in terms of a matrix-vector multiplication as follows:  
\[
\mathbf{h}_{k+1}(y) = \mathbf{A}_{k+1}(y) \cdot \mathbf{h}_k(y), 
\] where the matrix $\mathbf{A}_{k+1} = \begin{pmatrix}a_{ij} \end{pmatrix}$ is a matrix with polynomial entries defined by 
\[
 a_{ij}(y) = \begin{cases}
\omega_{k+1}(i) \cdot y & \text{ if } j \leqslant \des_{k+1}(i), \\
\omega_{k+1}(i) & \text{ if } j > \des_{k+1}(i). 
\end{cases}
\]
By monotonicity of $(\P, \lambda)$ the numbers $\des_{k+1}(i)$ are weakly increasing in $i$, and so by \Cref{lem:two-column-monotonicity-interlacing}, and Theorem~\ref{thm:interlacing}, the matrix $\mathbf{A}_{k+1}(y)$ preserves the interlacing property. This shows that $\mathbf{h}_{k+1}(y)$ is an interlacing sequence, which concludes the proof. 
\end{proof}

Our proof implies the following interlacing statement. 
\begin{proposition}
    For an atom $a$ of $\P$, the roots of $h(\Delta([a, \un]); y)$ and $h(\Delta(\P); y)$ interlace. 
\end{proposition}

\begin{proof}
    Fix an atom $a$ of $\P$. 
    By \Cref{eq:h-recursion} and the fact that $\des_0(1) = 0$, 
    the first polynomial in the refinement of $h_{0}(y)$ is equal to
    \[
    h_{n}^{1}(y) = \omega_{0}(1) \sum_{j \geqslant 1} h_{n-1}^{j}(y) = \omega_0(1) \sum_{j \leqslant 1} h_{[a, \un]}^j(y) = \omega_0(1) \; h(\Delta([a, \un]); y). 
    \]
    Since $\mathbf{h}_{n}(y)$ is an interlacing sequence, \Cref{prop:interlacing-properties} (Convexity) implies that 
    \[
    h_{n}^{1}(y) \preceq \sum_{i \geqslant 1} h_{n}^{i}(y) = h(\Delta(\P); y). 
    \]
    The statement follows from combining this interlacing with the previous equality. 
\end{proof}

\section{Chow polynomials and \texorpdfstring{$\gamma$}{gamma}-polynomials are real-rooted}
\label{sec:chow-RR}
In this section, we prove that the Chow and augmented Chow polynomial of a UMEL-shellable poset only have real and nonpositive roots.

\newtheorem*{thm:intro1}{Theorem~\ref{thm:main1-intro}}
\begin{thm:intro1}
    {\itshape
    Let $\P$ be a UMEL-shellable poset.
    Then the following statements hold:
    \begin{enumerate}[\normalfont (i)]
        \item The Chow polynomial $\uH_{\P}(x)$ only has real and nonpositive roots. 
        \item The augmented Chow polynomial $\H_{\P}(x)$ only has real and nonpositive roots.
    \end{enumerate}}
\end{thm:intro1}

We prove the theorem by proving the following equivalent theorem for the respective $\gamma$-polynomials. 

\begin{maintheorem}
\label{thm:gamma-RR-mono-rank-up-uniform}
    Let $\P$ be a UMEL-shellable poset. 
    The following statements hold.
    \begin{enumerate}[\normalfont(i)]
        \item The $\gamma$-polynomial $\gamma(\uH_{\P}; y)$ only has real and nonpositive roots. 
        \item The $\gamma$-polynomial $\gamma(\H_{\P}; y)$ only has real and nonpositive roots. 
    \end{enumerate}
\end{maintheorem}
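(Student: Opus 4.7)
I plan to adapt the strategy of Section~\ref{sec:h-RR} for the $h$-polynomial to the $\gamma$-polynomials of the (augmented) Chow polynomials. Combining \Cref{thm:gamma-flag-h-vector} with \Cref{lem:beta-counts-des}, both $\gamma$-polynomials are chain enumerators,
\[
\gamma(\H_\P;y)=\sum_{C}y^{|\Des(C)|}\,[\,\Des(C)\text{ stable}\,],\qquad \gamma(\uH_\P;y)=\sum_{C}y^{|\Des(C)|}\,[\,\Des(C)\text{ stable and }1\notin\Des(C)\,],
\]
where $C$ ranges over maximal chains of $\P$ and $\Des(C)$ is computed from an EL-labeling $\lambda$ witnessing that $\P$ is UMEL-shellable. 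The plan is to refine these sums by the first label above $\zero$ and show the refinements form an interlacing family; real-rootedness then follows from Obreschkoff's theorem (\Cref{thm:sum-of-interlacing-is-RR}), and \Cref{lem:gamma-transfo-RR} transfers the result to $\uH_\P(x)$ and $\H_\P(x)$, which is equivalent to \Cref{thm:main1-intro}.

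For each $s\in\P$ of corank $k$ and each label index $i\in[\ell_k]$, let $U_k^i(y)$ and $D_k^i(y)$ be the sums of $y^{|\Des(C)|}$ over the stable-descent maximal chains of $[s,\un]$ whose first label above $s$ has index $i$, separated according to whether position~$1$ is not a descent ($U$) or is a descent ($D$). Set $\Gamma_k^i=U_k^i+D_k^i$. Rank-uniformity of $(\P,\lambda)$ guarantees that these polynomials depend only on $k$ and $i$, and unpacking the definitions yields $\gamma(\uH_\P;y)=\sum_{i}U_n^i(y)$ and $\gamma(\H_\P;y)=\sum_{i}\Gamma_n^i(y)$ with $n=\rk\P$. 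A short case analysis on whether prepending a bottom cover creates a new descent at position~$1$ gives the recursion
\[
U_{k+1}^i=\omega_{k+1}(i)\sum_{\alpha>\des_{k+1}(i)}\Gamma_k^\alpha,\qquad D_{k+1}^i=y\cdot\omega_{k+1}(i)\sum_{\alpha\leqslant\des_{k+1}(i)}U_k^\alpha,
\]
with base case $U_1^i=\omega_1(i)$, $D_1^i=0$ at corank~$1$. Writing $R_k(j)=\sum_{\alpha>j}\Gamma_k^\alpha$ and $L_k(j)=\sum_{\alpha\leqslant j}U_k^\alpha$, the recursion takes the compact form $U_{k+1}^i=\omega_{k+1}(i)\,R_k(\des_{k+1}(i))$ and $\Gamma_{k+1}^i=\omega_{k+1}(i)[\,R_k(\des_{k+1}(i))+y\,L_k(\des_{k+1}(i))\,]$.

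The core of the proof is an induction on $k$ showing that the interleaved sequence $(U_k^1,\Gamma_k^1,U_k^2,\Gamma_k^2,\ldots,U_k^{\ell_k},\Gamma_k^{\ell_k})$ is an interlacing family. The key lemma is that the linear operator from this interleaved corank-$k$ vector to the sequence $(R_k(j)+y\,L_k(j))_{j=0}^{\ell_k}$---whose entries lie in $\{0,1,y\}$---satisfies the hypotheses of \Cref{thm:interlacing}. I would verify the Wronskian-type condition by a case analysis on the positions of the two columns relative to the row's switch point $j$, in the spirit of the proof of \Cref{lem:two-column-monotonicity-interlacing}. Combined with the weak monotonicity of $(\des_{k+1}(i))_i$ and positivity of $(\omega_{k+1}(i))_i$, this yields the interlacing of $(\Gamma_{k+1}^i)_i$; the interlacing of $(U_{k+1}^i)_i$ follows from the analogous and simpler operator taking the corank-$k$ vector to $(R_k(j))_j$.

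The main obstacle is closing the induction with the \emph{joint} interleaved interlacing at corank $k+1$: the separate interlacings of $(U_{k+1}^i)_i$ and $(\Gamma_{k+1}^i)_i$ do not automatically combine into the interleaved order, because the recursion couples $(U_k^\alpha)$ and $(\Gamma_k^\alpha)$ asymmetrically into $U_{k+1}^i$ and $\Gamma_{k+1}^i$, and the naive $2\ell_{k+1}\times 2\ell_k$ transition matrix fails the hypotheses of \Cref{thm:interlacing} for mixed row pairs such as $(\Gamma_{k+1}^i,U_{k+1}^{i+1})$. I plan to patch this by carrying additional pairwise interlacings $U_{k+1}^i\preceq\Gamma_{k+1}^i$ and $\Gamma_{k+1}^i\preceq U_{k+1}^{i+1}$ through the induction, deducing them from the convexity and interpolation clauses of \Cref{prop:interlacing-properties} together with an auxiliary inductive claim---most naturally, $R_k(j)\preceq y\,L_k(j)$ for each $j$. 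Proving this auxiliary comparison---that the right partial sum of $(\Gamma_k^\alpha)$ is interlaced by $y$ times the left partial sum of $(U_k^\alpha)$---is expected to be the technical crux, and should rely on the structural fact that each $D_k^\alpha$ contributes a factor of $y$ absent from each $U_k^\alpha$.
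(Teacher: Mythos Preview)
Your refinements $U_k^i,D_k^i,\Gamma_k^i$ and the recursion you derive coincide with the paper's $\gamma^k_{i,\asce},\gamma^k_{i,\desc},\gamma^k_i$, and the overall plan is the right one. The gap is in the inductive hypothesis you choose to carry: the interleaved sequence
\[
(U_k^1,\Gamma_k^1,U_k^2,\Gamma_k^2,\ldots,U_k^{\ell_k},\Gamma_k^{\ell_k})
\]
is \emph{not} an interlacing family in general, so the induction you outline cannot close. Concretely, take $\P$ to be a Boolean lattice (all widths equal to~$1$ and $\des(i)=i-1$). At corank~$4$ the recursion gives $\Gamma_4^2=y^2+4y$ and $U_4^3=y$; since $\deg\Gamma_4^2=2>1=\deg U_4^3$, the relation $\Gamma_4^2\preceq U_4^3$ is impossible. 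Thus the statement $\Gamma_{k+1}^i\preceq U_{k+1}^{i+1}$ that you plan to ``patch in'' via $R_k(j)\preceq y\,L_k(j)$ and \Cref{prop:interlacing-properties} is simply false, and no amount of convexity or interpolation will rescue it.

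The paper avoids this by carrying the descent row $D_k^i$ explicitly and strengthening the inductive hypothesis to a three-row diagram: the sequences $(U_k^i)_i$, $(\Gamma_k^i)_i$, $(D_k^i)_i$ are each interlacing, the columns satisfy $U_k^i\preceq\Gamma_k^i\preceq D_k^i$, and, crucially, $U_k^i\preceq D_k^j$ holds for \emph{all} pairs $i,j$ (not only $i\leqslant j$). This ``wrap-around'' relation from the end of the $U$-row to the beginning of the $D$-row is exactly what is needed to push the middle row through: in the delicate case $\des(i)<\des(j)$, proving $\Gamma_{k+1}^i\preceq\Gamma_{k+1}^j$ requires an auxiliary interlacing sequence mixing $U_{k}^\alpha$'s with small index and $D_k^\beta$'s (or $\Gamma_k^\beta$'s) with possibly larger index, which your interleaved hypothesis does not supply. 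Once you replace your two-row interleaving by this three-row package, the six pairwise statements close by the same style of matrix arguments you sketch.
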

The upshot of passing to the $\gamma$-polynomials is that those polynomials count certain maximal chains of $\P$ with the descent statistics, as explained in Sections~\ref{subsec:flag-enum}~and~\ref{sec:chow-poly-recap}. We will prove \Cref{thm:gamma-RR-mono-rank-up-uniform} by providing a decomposition of the $\gamma$-polynomials into interlacing families of polynomials, given by refinements coming from the face and flag enumerations of the EL-labeled poset $(\P, \lambda)$, similarly to what we did in the previous section. These refinement polynomials will satisfy explicit linear recursive formulas, and we will use Theorem \ref{thm:interlacing} to prove that those polynomials form interlacing sequences throughout the recursion. 
\subsection{Refinements} 
Let $(\P,\lambda)$ be a monotonic rank-uniform labeled poset of rank $n$. We introduce several sequences of interlacing polynomials associated to $(\P, \lambda)$. They are defined by counting the maximal chains of $\P$ whose EL-label sequence have no double descents as follows. 

Let $\calC \coloneq \calC(\P)$ denote the collection of maximal chains of $\P$. If $S$ is a subset of $[n-1]$, let $\calC(S)$ denote the collection of maximal chains of $\P$ labeled by $\lambda$ with descent set $S$: 
\[
\calC(S) \coloneqq \{ C \in \calC \mid \Des(C) = S\}. 
\]
Let $\calC^{\textrm{ndd}} \coloneqq \calC_{\textrm{ndd}}(\P, \lambda)$ denote the collection of maximal chains in $\P$ labeled by $\lambda$ with no double descent: 
\[
\calC^{\textrm{ndd}} \coloneq \{\: C \in \calC \mid \Des(C) \text{ is stable }\}.
\] 
By construction, 
\[
\calC^{\textrm{ndd}} =  \bigsqcup_{\substack{ S \subseteq [n-1] \\ S \text{ stable}}} \: \calC(S). 
\]  

\subsubsection{Refinement by the starting label}
The set $\calC^{\textrm{ndd}}$ admits a refinement depending on whether the first label and the second label of a maximal chain labeled by $\lambda$ form a descent or an ascent. We define $\calC^{\textrm{ndd}}_{\asce}$ as the set of maximal chains of $\P$ with no double descents, which start with an ascent, and we define $\calC^{\textrm{ndd}}_{\desc}$ as the set of maximal chains of $\P$ with no double descents, which start with a descent: 
\begin{align*}
\calC^{\textrm{ndd}}_{\asce}
   &\coloneqq \{C \in \calC_{\textrm{ndd}} \mid 1 \notin \Des(C)\}, \\
\calC^{\textrm{ndd}}_{\desc}
   &\coloneqq \{C \in \calC_{\textrm{ndd}} \mid 1 \in \Des(C)\}. 
\end{align*}

The set $\calC^{\textrm{ndd}}$ admits another refinement by the index of its first edge label. For every $i \geqslant 1$, we define $\calC^{\textrm{ndd}}_i$ as the set of maximal chains of $(\P, \lambda)$ of the form 
\[
C \colon \zero = s_0 \lessdot s_1 \lessdot \cdots \lessdot s_n = \un
\] 
such that the index of the first label is $i$. 
\begin{align*}
\calC^{\textrm{ndd}}_{i}
   \coloneqq \{ C \in \calC^{\textrm{ndd}} \mid \Ind(s_0, s_1) = i\}. 
\end{align*}
We have the following intersection of the two refinements. 
\begin{align*}
\calC^{\textrm{ndd}}_{i, \asce}
   &\coloneqq \{C \in \calC^{\textrm{ndd}} \mid \Ind(s_0, s_1) = i, \lambda(s_0, s_1) \trianglelefteqslant \lambda(s_1, s_2)\}, \\
\calC^{\textrm{ndd}}_{i, \desc}
   &\coloneqq \{C \in \calC^{\textrm{ndd}} \mid \Ind(s_0, s_1) = i, \lambda(s_0, s_1) \vartriangleright \lambda(s_1, s_2)\}. 
\end{align*}
Finally, we consider the generating polynomials of descent counts of these refinements
\begin{align*}
    \gamma^n_{i}(y) \coloneqq \sum_{C \in \calC^{\textrm{ndd}}_i} y^{\des(C)}, \quad 
    \gamma^n_{i, \asce}(y) \coloneqq \sum_{C \in \calC^{\textrm{ndd}}_{i, \asce}} \: y^{\des(C)}, \quad 
    \gamma^n_{i, \desc}(y) \coloneqq \sum_{C \in \calC^{\textrm{ndd}}_{i, \desc}} \: y^{\des(C)}.
\end{align*}

\begin{proposition}
\label[proposition]{prop:gamma-refinement}
Let $(\P, \lambda)$ be an EL-labeled poset of rank $n$. Then the $\gamma$-polynomial of $\uH_{\P}(x)$ is the sum of all $\gamma^{n}_{i, \asce}(y)$ 
\[
\gamma(\uH_{\P}; y) = \sum_{i = 1}^{\ell} \gamma^{n}_{i, \asce}(y). 
\]
The $\gamma$-polynomial of $\H_{\P}(x)$ is the sum of all $\gamma^{n}_{i}(y)$ 
\[
\gamma(\H_{\P}; y) = \sum_{i = 1}^{\ell} \gamma^{n}_{i}(y). 
\]
\end{proposition}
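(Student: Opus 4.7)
The plan is to reduce this proposition to a routine bookkeeping identity by combining the two preparatory results already established in the paper: \Cref{thm:gamma-flag-h-vector}, which records $\gamma$-positive expansions of $\uH_{\P}(x)$ and $\H_{\P}(x)$ in the basis $\{x^{i}(1+x)^{d-2i}\}$, and \Cref{lem:beta-counts-des}, which interprets the flag $h$-vector entry $\beta_{\P}(S)$ as the number of maximal chains with prescribed descent set. Note that the proposition only assumes that $\lambda$ is an EL-labeling---neither uniformity nor monotonicity is used here---which is consistent with this being a reindexing statement rather than a structural one.

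First I would read off the $\gamma$-polynomials directly from \Cref{thm:gamma-flag-h-vector}. Since the expansions there are already in $\gamma$-basis form (with center of symmetry $(n-1)/2$ for $\uH_{\P}$ and $n/2$ for $\H_{\P}$), one obtains
\[
\gamma(\uH_{\P};y) \;=\; \sum_{\substack{S \subseteq \{2,\ldots,n-1\} \\ S \text{ stable}}} \beta_{\P}(S)\, y^{|S|},
\qquad
\gamma(\H_{\P};y) \;=\; \sum_{\substack{S \subseteq \{1,\ldots,n-1\} \\ S \text{ stable}}} \beta_{\P}(S)\, y^{|S|}.
\]
Next I would substitute $\beta_{\P}(S) = |\calC(S)|$ via \Cref{lem:beta-counts-des} and swap the order of summation. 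Each maximal chain $C \in \calC$ then contributes $y^{\des(C)}$ precisely when $\Des(C)$ lies in the indexing set. In the augmented case the indexing constraint is simply ``$\Des(C)$ stable,'' which by definition is the same as $C \in \calC^{\textrm{ndd}}$. In the non-augmented case, the additional requirement $1 \notin S$ corresponds to $1 \notin \Des(C)$, i.e., the first two labels of $C$ satisfy $\lambda(s_0,s_1) \trianglelefteqslant \lambda(s_1,s_2)$.

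Finally I would decompose each of these chain-sums according to the index of the first label. Every maximal chain $C\colon \zero = s_0 \lessdot s_1 \lessdot \cdots \lessdot s_n = \un$ has a well-defined index $\Ind(s_0,s_1) \in [\ell]$, producing a disjoint decomposition $\calC^{\textrm{ndd}} = \bigsqcup_{i=1}^{\ell} \calC^{\textrm{ndd}}_{i}$. Restricting further by the ascent condition $\lambda(s_0,s_1) \trianglelefteqslant \lambda(s_1,s_2)$ yields $\bigsqcup_{i=1}^{\ell} \calC^{\textrm{ndd}}_{i,\asce}$. Summing the contributions $y^{\des(C)}$ over each piece exactly recovers $\gamma^{n}_{i}(y)$ and $\gamma^{n}_{i,\asce}(y)$, respectively, giving the two claimed identities. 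No substantive obstacle arises; the entire content of the statement is packaged by \Cref{thm:gamma-flag-h-vector} and \Cref{lem:beta-counts-des}, and its role in the paper is to set up the refinements $\gamma^{n}_{i}(y)$ and $\gamma^{n}_{i,\asce}(y)$ that will be shown to form interlacing sequences under the UMEL-shellability hypothesis.
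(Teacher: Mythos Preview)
Your proposal is correct and follows essentially the same approach as the paper's proof: apply \Cref{thm:gamma-flag-h-vector} to extract the $\gamma$-polynomials, use \Cref{lem:beta-counts-des} to replace $\beta_{\P}(S)$ by a count of maximal chains with descent set $S$, and then partition those chains by the index of their first label. The only cosmetic difference is that the paper writes the non-augmented constraint as ``$S$ stable, $1 \notin S \subseteq [n-1]$'' whereas you write it as ``$S \subseteq \{2,\ldots,n-1\}$ stable,'' which is the same thing.
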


\begin{proof}
    By \Cref{thm:gamma-flag-h-vector}, the $\gamma$-polynomial of the Chow polynomial of $\P$ is given by
    \[
    \gamma(\uH_{\P}; \: y) \;=\; \sum_{\substack{S \text{ stable} \\ 1 \notin S \subseteq [n-1]}} \beta_{\P}(S) \: y^{|S|}.
    \]
    By \Cref{lem:beta-counts-des},
    \[
    \sum_{\substack{S \text{ stable} \\ 1 \notin S \subseteq [n-1]}} \beta_{\P}(S) \: y^{|S|} \;=\; \sum_{\substack{S \text{ stable} \\ 1 \notin S \subseteq [n-1]}} \abs{\calC(S)} \cdot y^{\abs{S}}.
    \]
    Grouping the maximal chains according to the index of their first label yields
    \[
    \sum_{\substack{S \text{ stable} \\ 1 \notin S \subseteq [n-1]}} \beta_{\P}(S) \: y^{|S|} \;=\; \sum_{i = 1}^{\ell} \sum_{\substack{S \text{ stable} \\ 1 \notin S \subseteq [n-1]}}\abs{\calC_i(S)} \cdot y^{\abs{S}}.
    \]
    By the construction of the refining polynomials, we have
    \[
    \gamma(\uH_{\P}; \: y) \;=\; \sum_{\substack{S \text{ stable} \\ 1 \notin S \subseteq [n-1]}} \beta_{\P}(S) \: y^{|S|} \;=\; \sum_{i = 1}^{\ell} \gamma^{n}_{i, \asce}(y).
    \]
    Note that the condition that $S$ does not contain $1$ implies that the edge labeling of the maximal chains begins with an ascent.

    By \Cref{thm:gamma-flag-h-vector}, the $\gamma$-polynomial of the augmented Chow polynomial is given by
    \[
    \gamma(\H_{\P}; y) \;=\; \sum_{\substack{S \text{ stable} \\ S \subseteq [n-1]}} \beta_{\P}(S) \: y^{|S|}. 
    \]
    By \Cref{lem:beta-counts-des},
    \[
    \sum_{\substack{S \textrm{ stable} \\ S \subseteq [n-1]}} \beta_{\P}(S) \: y^{|S|} \;=\; \sum_{\substack{S \text{ stable} \\ S \subseteq [n-1]}} \abs{\calC(S)} \cdot y^{\abs{S}}. 
    \]
    Grouping the maximal chains according to the index of their first edge label gives
    \[
    \sum_{\substack{S \text{ stable} \\ S \subseteq [n-1]}} \beta_{\P}(S) \: y^{|S|} \;=\; \sum_{i = 1}^{\ell} \sum_{\substack{S \text{ stable} \\ S \subseteq [n-1]}} \abs{\calC_i(S)} \cdot y^{\abs{S}}. 
    \]  
    By the construction of the refining polynomials, we have
    \[
    \gamma(\H_{\P};y) \;=\; \sum_{\substack{S \text{ stable} \\ S \subseteq [n-1]}} \beta_{\P}(S) \: y^{|S|} \;=\; \sum_{1 \leqslant i  \leqslant\ell} \gamma^{n}_{i}(y).
    \]
    Note that in this case, there is no restriction on whether the edge labeling of a maximal chain begins with an ascent or a descent.
\end{proof}

More generally for all $1 \leqslant k \leqslant n$ and all $i \geqslant 1$, we denote by $\gamma^k_{i}, \gamma^k_{i,\asce}, \gamma^k_{i, \desc}$ the corresponding $\gamma$-polynomials associated to $([s, \un], \lambda_{s, \un})$ for any element $s$ of corank $k.$ By rank-uniformity of $(\P, \lambda)$ those polynomials only depend on the corank of $s$. Let $\ell$ and $\ell'$ denote the numbers of distinct labels immediately above $\zero$ and above any atom, respectively. To prove \Cref{thm:gamma-RR-mono-rank-up-uniform} (1) and (2), it suffices by \Cref{prop:gamma-refinement} and Theorem \ref{thm:sum-of-interlacing-is-RR} to show that the sequences of polynomials
\[
\pmb{\bm{\gamma}}^n_{\asce}(y) \coloneqq (\gamma^n_{i, \asce}(y))_{i=1}^{\ell}, 
\quad \text{ and } \quad 
\pmb{\bm{\gamma}}^{n}(y) \coloneqq \left(\gamma^{n}_{i}(y)\right)_{i=1}^{\ell}
\]
are interlacing. 

This is implied by the following lemma. 
\begin{lemma}
\label[lemma]{lem:induction}
The following diagram of polynomials forms interlacing families along directed paths. 
\begin{center}
\begin{equation}
\raisebox{-2.3cm}{
\begin{tikzpicture}[scale=2,
    baseline=(current bounding box.center),
    every path/.style={line width=0.8pt}  
]
    \node[] (A) at (0,0) {$\gamma^{n}_{1, \asce}$};
    \node[] (B) at (1,0) {$\gamma^{n}_{2, \asce}$};
    \node[] (C) at (2,0) {$\cdots$};
    \node[] (D) at (3,0) {$\gamma^{n}_{\ell - 1, \asce}$};
    \node[] (E) at (4,0) {$\gamma^{n}_{\ell, \asce}$};

    \node[] (F) at (0,-1) {$\gamma^{n}_{1}$};
    \node[] (G) at (1,-1) {$\gamma^{n}_{2}$};
    \node[] (H) at (2,-1) {$\cdots$};
    \node[] (I) at (3,-1) {$\gamma^{n}_{\ell - 1}$};
    \node[] (J) at (4,-1) {$\gamma^{n}_{\ell}$};

    \node[] (K) at (0,-2) {$\gamma^{n}_{1, \desc}$};
    \node[] (L) at (1,-2) {$\gamma^{n}_{2, \desc}$};
    \node[] (M) at (2,-2) {$\cdots$};
    \node[] (N) at (3,-2) {$\gamma^{n}_{\ell - 1, \desc}$};
    \node[] (0) at (4,-2) {$\gamma^{n}_{\ell, \desc}$};

    \node[] () at (2, -0.5) {$\cdots$};
    \node[] () at (2, -1.5) {$\cdots$};

    \draw[->] (A) -- (B);
    \draw[->] (B) -- (C);
    \draw[->] (C) -- (D);
    \draw[->] (D) -- (E);

    \draw[->] (F) -- (G);
    \draw[->] (G) -- (H);
    \draw[->] (H) -- (I);
    \draw[->] (I) -- (J);

    \draw[->] (K) -- (L);
    \draw[->] (L) -- (M);
    \draw[->] (M) -- (N);
    \draw[->] (N) -- (0);

    \draw[->] (A) -- (F);
    \draw[->] (B) -- (G);
    \draw[->] (D) -- (I);
    \draw[->] (E) -- (J);

    \draw[->] (F) -- (K);
    \draw[->] (G) -- (L);
    \draw[->] (I) -- (N);
    \draw[->] (J) -- (0);

    \draw[->,smooth, tension=0.6] plot coordinates {
    (3.8, -0.1)   
    (3.0, -0.3)   
    (2.4, -0.75)  
    (1.7, -0.9)   
    (1.2, -1.4)   
    (0.6, -1.5)   
    (0.2, -1.9)   
    };
\end{tikzpicture}
}
\tag{D}
\label{eq:inductive-step-P}
\end{equation}
\end{center}
\end{lemma}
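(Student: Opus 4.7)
The plan is to prove the lemma by induction on the corank $k$ of the interval over which the refinement polynomials are defined, showing at each step that the polynomials along every directed path in the diagram form an interlacing family. The base case $k=1$ is immediate since only one label lies above each coatom, giving $\gamma^1_1 = \gamma^1_{1,\asce} = 1$ and $\gamma^1_{1,\desc} = 0$. For the inductive step, I partition the ndd maximal chains of $[s,\un]$ (with $s$ of corank $k+1$) according to the index $i$ of the first label and the ascent/descent status at position $1$: if position $1$ is an ascent, the second label must have index $j>\des_{k+1}(i)$ and the remaining $k$-chain is an arbitrary ndd chain, while if position $1$ is a descent the no-double-descent condition forces the remaining chain to start with an ascent. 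This yields the recurrences
\[
\gamma^{k+1}_{i,\asce}(y) = \omega_{k+1}(i)\sum_{j > \des_{k+1}(i)} \gamma^k_j(y), \qquad \gamma^{k+1}_{i,\desc}(y) = y\,\omega_{k+1}(i)\sum_{j \leq \des_{k+1}(i)} \gamma^k_{j,\asce}(y),
\]
together with $\gamma^{k+1}_i = \gamma^{k+1}_{i,\asce} + \gamma^{k+1}_{i,\desc}$, where rank-uniformity is used to ensure that $\omega_{k+1}(\cdot)$ and $\des_{k+1}(\cdot)$ are well-defined.

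From these recurrences I establish the various interlacings at corank $k+1$ in turn. The asce-row interlacing comes from the constant matrix with entries $\omega_{k+1}(i)\,\mathbf{1}[j>\des_{k+1}(i)]$ acting on the inductively interlacing middle-row family $(\gamma^k_j)_j$: monotonicity of $\des_{k+1}(\cdot)$ makes all $2\times 2$ minors nonnegative via a short case analysis on the positions of $j_1<j_2$ relative to the thresholds $\des_{k+1}(i_1)\leq \des_{k+1}(i_2)$, so \Cref{lem:two-minors-monotonicity-interlacing} applies. The desc-row interlacing is analogous, with an extra factor of $y$ that is harmless because multiplication by $y$ preserves interlacing of palindromic nonnegatively-rooted families. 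For the middle-row interlacing one reads the recursion $\gamma^{k+1}_i = \omega_{k+1}(i)\bigl[y \sum_{j\leq \des_{k+1}(i)} \gamma^k_{j,\asce} + \sum_{j>\des_{k+1}(i)} \gamma^k_j\bigr]$ as a matrix of the shape treated by \Cref{lem:two-column-monotonicity-interlacing} combined with \Cref{thm:interlacing}, applied to an appropriate interlacing input assembled from the inductive data. The column interlacings $\gamma^{k+1}_{i,\asce} \preceq \gamma^{k+1}_i \preceq \gamma^{k+1}_{i,\desc}$ then follow immediately from $\gamma^{k+1}_i = \gamma^{k+1}_{i,\asce} + \gamma^{k+1}_{i,\desc}$ and the interpolation property \Cref{prop:interlacing-properties}(iv), once the single inequality $\gamma^{k+1}_{i,\asce} \preceq \gamma^{k+1}_{i,\desc}$ is in hand.

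The remaining cross-type interlacings $\gamma^{k+1}_{i,\asce} \preceq \gamma^{k+1}_{j,\desc}$ for all $i,j$ (which include the curved arrow and, together with the row interlacings, handle every directed path traversing it) are the subtle piece. I would obtain them by treating the concatenation $(\gamma^{k+1}_{1,\asce},\dots,\gamma^{k+1}_{\ell,\asce},\gamma^{k+1}_{1,\desc},\dots,\gamma^{k+1}_{\ell,\desc})$ as a matrix image of the corresponding inductively interlacing corank-$k$ concatenation, and verifying the $2\times 2$ condition of \Cref{thm:interlacing} by case analysis on the block structure of this matrix. Combining row, column, and cross-type interlacings then shows that along every directed path the polynomials form an interlacing family, since each such path decomposes into horizontal segments, vertical column segments, and at most one traversal of the curved arrow, and for each pair of polynomials appearing on the path the corresponding interlacing has been established. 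The step I expect to be most delicate is the final matrix verification: the block matrix mixes constant blocks with blocks carrying a factor of $y$, and the $2\times 2$ interlacing condition for column pairs straddling the asce/desc boundary of the input is where the monotonicity of $\des_{k+1}$ built into UMEL-shellability becomes decisive, exactly as it does in the proof of \Cref{thm:main2-intro} for $h$-polynomials.
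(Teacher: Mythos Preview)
Your overall architecture matches the paper's: induction on corank, the same recurrences, and the same treatment of the asce-row (constant matrix plus \Cref{lem:two-minors-monotonicity-interlacing}) and desc-row (the same with an extra factor of $y$). But the two ``delicate'' steps you flag are precisely where your plan is incomplete, and the natural implementations you suggest actually fail the hypotheses of \Cref{thm:interlacing}.

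For the middle row you write that the recursion $\gamma^{k+1}_i = \omega(i)\bigl[y\sum_{j\leq \des(i)}\gamma^k_{j,\asce} + \sum_{j>\des(i)}\gamma^k_j\bigr]$ can be handled by \Cref{lem:two-column-monotonicity-interlacing} applied to ``an appropriate interlacing input assembled from the inductive data.'' The obstacle is that the input for column $j$ is $\gamma^k_{j,\asce}$ when $j\leq \des(i)$ and $\gamma^k_j$ when $j>\des(i)$, so the input depends on the row. If you instead expand $\gamma^k_j=\gamma^k_{j,\asce}+\gamma^k_{j,\desc}$ and feed in the concatenation $(\gamma^k_{1,\asce},\dots,\gamma^k_{\ell',\asce},\gamma^k_{1,\desc},\dots,\gamma^k_{\ell',\desc})$, the resulting matrix has, for rows $i_1<i_2$ with $d_1=\des(i_1)<d_2=\des(i_2)<\ell'$, a $2\times 2$ submatrix at columns $j_1=d_2+1$, $j_2=\ell'+d_1+1$ equal to $\bigl(\begin{smallmatrix}\omega(i_1)&\omega(i_1)\\ \omega(i_2)&0\end{smallmatrix}\bigr)$, which violates condition~(ii) of \Cref{thm:interlacing} (the linear polynomial $(px+q)\omega(i_1)$ does not interlace $(px+q)\omega(i_1)+\omega(i_2)$ in the required direction). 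The same issue kills your proposed ``big concatenation'' approach to $\gamma^{k+1}_{i,\asce}\preceq\gamma^{k+1}_{j,\desc}$. The paper's remedy is different: it treats each pair $i<j$ separately and, when $\des(i)<\des(j)$, introduces the aggregated polynomials $\gamma^{k}_{J,\asce}=\sum_{m\in J}\gamma^k_{m,\asce}$ and $\gamma^{k}_{J}=\sum_{m\in J}\gamma^k_m$ for $J=\{\des(i)+1,\dots,\des(j)\}$, first proving that $(\gamma^k_{1,\asce},\dots,\gamma^k_{\des(i),\asce},\gamma^k_{J,\asce},\gamma^k_{J},\gamma^k_{\des(j)+1},\dots,\gamma^k_{\ell'})$ is interlacing and then applying a tailored $2\times(\cdot)$ matrix. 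This pair-by-pair construction with the $J$-sums is the missing idea; without it, neither the middle-row nor the cross-type step goes through.
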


This diagram is a generalization of the diagram in \cite[Theorem~3.3]{hoster-stump}. 

\begin{proof}[Proof~of~\Cref{lem:induction}]
The proof goes by induction on $n$. In rank $1$ we have $\ell = 1$ and we set by convention $\gamma^1_{1, \asce}=1,\gamma^1_{1} = 1, \gamma^1_{1, \desc} = 0,$ for which the statement is true. For the induction step we split the result into several statements covering all directed paths. 
\begin{enumerate}[\normalfont(i)]
    \item \label{item:main-top-row} The top row of \eqref{eq:inductive-step-P} is an interlacing sequence. 
    \item \label{item:main-mid-row} The middle row of \eqref{eq:inductive-step-P} is an interlacing sequence.
    \item \label{item:main-bot-row} The bottom row of \eqref{eq:inductive-step-P} is an interlacing sequence.
    \item \label{item:main-top-to-bot} For all $1 \leqslant i , j \leqslant \ell$ we have $\gamma^{n}_{i, \asce} \preceq \gamma^{n}_{j, \desc}.$
    \item \label{item:main-top-to-mid} For all $1 \leqslant i \leqslant j \leqslant \ell$ we have $\gamma^{n}_{i, \asce} \preceq \gamma^{n}_{j}.$
    \item \label{item:main-mid-to-bot} For all $1 \leqslant i \leqslant j \leqslant \ell$ we have $\gamma^{n}_{i} \preceq \gamma^{n}_{j, \desc}.$
\end{enumerate}

For statement \ref{item:main-top-row}, we derive a recursive formula for the family $\pmb{\bm{\gamma}}^n_{\asce}$ in terms of the family $\pmb{\bm{\gamma}}^{n-1}$ as follows. We consider any element $t$ of rank $2$, and any atom $s$ such that $\zero \lessdot s \lessdot t$ and $\Ind(\zero, s) = i$, and analyze whether the labels of this chain of length $2$ form a descent or an ascent. Recall that $\des_{s}(i)$ counts the number of indices in the labels above any element covering $s$ with label $i$, that gives rise to a descent, and let $j = \Ind(s, t)$. 
\begin{itemize}
    \item If $j \leqslant \des(i)$, then $\lambda(\zero, s) \vartriangleright \lambda(s, t)$, giving rise to a descent.
    \item If $j > \des(i)$, then $\lambda(\zero, s) \trianglelefteqslant \lambda(s, t)$, giving rise to an ascent.
\end{itemize} 
To simplify notations we denote $\omega = \omega_{0}$. By rank-uniformity of $(\P, \lambda)$, we have the following recursion 
\[
\gamma^{n}_{i, \asce} = \omega(i) \cdot \sum_{j > \des(i)} \gamma^{\,n-1}_{j}
= \omega(i) \cdot \sum_{j > \des(i)} \gamma^{\,n-1}_{j}.
\]
This recursion can be rewritten as 
\[
\pmb{\bm{\gamma}} _{\asce}^{n} = \mathbf{A} \cdot \pmb{\bm{\gamma}}^{n-1}, 
\] where the matrix $\mathbf{A} = \begin{pmatrix} a_{ij}\end{pmatrix}$ is defined by  
\[
a_{ij} = \begin{cases}
    0 & \text{if $j \leqslant \des(i)$}, \\
    \omega(i) & \text{if $j > \des(i)$}.
\end{cases}
\]
By \Cref{lem:two-minors-monotonicity-interlacing} and \Cref{thm:interlacing}, $\pmb{\bm{\gamma}} _{\asce}^{n}$ is an interlacing sequence of polynomials. 
\smallskip

For statement \ref{item:main-mid-row}, let $1\leqslant i \leqslant j \leqslant \ell$ be two integers. We prove the interlacing relation $\gamma^n_i \preceq  \gamma^n_{j}$ by expressing  $(\gamma^n_i, \gamma^n_{j})$ in terms of the interlacing families $\pmb{\bm{\gamma}}_{\asce}^{n-1}$ and $\pmb{\bm{\gamma}}^{n-1}$ as follows. By definition we have 
\[
\gamma^{n}_{i} = \gamma^{n}_{i, \asce} + \gamma^{n}_{i, \desc}, 
\quad \text{ and } \quad 
\gamma^{n}_{j} = \gamma^{n}_{j, \asce} + \gamma^{n}_{j, \desc}.
\]
As explained in the proof of statement \ref{item:main-top-row}, 
\begin{align*}
\gamma^{n}_{i, \asce} &= \omega(i) \: \:\sum_{k > \des(i)} \gamma^{n-1}_{k}, \\
\gamma^{n}_{j, \asce} &= \omega(j) \: \:  \sum_{k > \des(j)} \gamma^{n-1}_{k}, 
\end{align*}
and by similar arguments, 
\begin{align*}
\gamma^{n}_{i, \desc} &= \omega(i) \: y \: \: \sum_{k \leqslant \des(i)} \gamma^{n-1}_{k, \asce}, \\
\gamma^{n}_{j, \desc} &= \omega(j) \: y \: \: \sum_{k \leqslant \des(j)} \gamma^{n-1}_{k, \asce}.
\end{align*}
Summing the relations above, we obtain 
\begin{align}\label{eq:gamma-i-j-all-1}
\gamma_{i}^{n} &= \omega(i) \left(y \cdot \sum_{k \leqslant \des(i)} \gamma_{k, \asce}^{n-1} + \sum_{k > \des(i)} \gamma_k^{n-1} \right),
\end{align}
\begin{align}\label{eq:gamma-i-j-all-2} 
\gamma_{j}^{n} &= \omega(j) \left( y \cdot \sum_{k \leqslant \des(j)} \gamma_{k, \asce}^{n-1} + \sum_{k > \des(j)} \gamma_{k}^{n-1}\right). 
\end{align}
We divide the final argument into two cases, depending on the difference $\des(j) - \des(i)$. By monotonicity of $(\P, \lambda)$, we have $\des(j) - \des(i) \geqslant 0$.
\begin{itemize}[leftmargin=18pt]
\item If $\des(i) = \des(j)$, then 
\[
\gamma_{j}^{n} = \frac{\omega(j)}{\omega(i)} \gamma_{i}^{n}. 
\] 
By the fact that the sequence $(\gamma^{n-1}_{1, \asce},\ldots, \gamma^{n-1}_{\des(i), \asce}, \gamma^{n-1}_{\des(i) + 1}, \ldots, \gamma^{n-1}_{\ell'})$ is interlacing, Lemma~\ref{lem:two-column-monotonicity-interlacing}, Theorem~\ref{thm:interlacing},  Equation~\eqref{eq:gamma-i-j-all-1}, and Equation~\eqref{eq:gamma-i-j-all-2}, $\pmb{\bm{\gamma}}^n$ is interlacing and so we have $\gamma_{i}^{n} \preceq \gamma_{j}^{n}$.

\item If $\des(i) < \des(j)$, then let us denote $J \coloneqq \{\des(i) + 1, \ldots, \des(j)\}$, and set 
\[
\gamma^{n-1}_{J, \asce} = \sum_{k \in J } \gamma^{n-1}_{k, \asce}, \quad \gamma^{n-1}_{J} = \sum_{k \in J} \gamma^{n-1}_{k}.
\]  
We start by proving that the sequence of polynomials 
\begin{equation*}\label{eq:sequence-intermediate-sum}
(\gamma^{n-1}_{1, \asce}, \ldots, \gamma^{n-1}_{\des(i), \asce}, \gamma^{n-1}_{J, \asce}, \gamma^{n-1}_{J}, \gamma^{n-1}_{\des(j) +1}, \ldots, \gamma^{n-1}_{\ell'}) 
\end{equation*}
is interlacing. Since the sequence $(\gamma^{n-1}_{1, \asce}, \ldots, \gamma^{n-1}_{\des(i), \asce}, \gamma^{n-1}_{\des(j) +1}, \ldots, \gamma^{n-1}_{\ell'})$ is interlacing we only have to prove that both $\gamma^{n-1}_{J, \asce}$ and $\gamma^{n-1}_{J}$ are compatible with every other polynomial of sequence \eqref{eq:sequence-intermediate-sum}. First we have 
\begin{equation*}\label{eq:gamma-J}
\begin{pmatrix}
\gamma^{n-1}_{J, \asce} &
\gamma^{n-1}_{J}
\end{pmatrix}^{\top}
=      
\mathbf{A} \cdot 
\begin{pmatrix}
\gamma^{n-1}_{\des(i) +1, \asce} &
\cdots &
\gamma^{n-1}_{\des(j), \asce} &
\gamma^{n-1}_{\des(i)+1, \desc } &
\cdots &
\gamma^{n-1}_{\des(j), \desc}
\end{pmatrix}^{\top},  
\end{equation*}
where  
\begin{equation*}
    \mathbf{A} =  
\begin{pmatrix} 
    1 & \cdots & 1 & 0 & \cdots & 0 \\
    1 & \cdots & 1 & 1 & \cdots & 1 
\end{pmatrix}.
\end{equation*}
Since the sequence of polynomials on the right hand side of \eqref{eq:gamma-J} is interlacing, by Theorem~\ref{thm:interlacing} we obtain $\gamma^{n-1}_{J, \asce} \preceq \gamma^{n-1}_{J}$. Similarly, by the fact that the sequence of polynomials $(\gamma^{n-1}_{1, \asce}, \ldots, \gamma^{n-1}_{\des(j), \asce})$ is interlacing we get $\gamma^{n-1}_{k ,\asce} \preceq \gamma^{n-1}_{J, \asce}$ for any $k \leqslant \des(i),$ and by the fact that the sequence $(\gamma^{n-1}_{\des(i) +1}, \ldots, \gamma^{n-1}_{\ell'})$ is interlacing we get $\gamma^{n-1}_{J} \preceq \gamma^{n-1}_{k}$ for all $k > \des(j)$. This proves that the sequence \eqref{eq:sequence-intermediate-sum} is interlacing. To conclude, we have 
\begin{equation*}
\begin{pmatrix}
\gamma^n_i &
\gamma^{n}_{j}
\end{pmatrix}^{\top}
=      
\mathbf{A}(y) \cdot 
\begin{pmatrix}
\gamma^{n-1}_{1, \asce}& \ldots& \gamma^{n-1}_{\des(i), \asce}& \gamma^{n-1}_{J, \asce}& \gamma^{n-1}_{J}& \gamma^{n-1}_{\des(j) +1}& \ldots& \gamma^{n-1}_{\ell'}
\end{pmatrix}^{\top},  
\end{equation*}
with 
\begin{equation*}
    \mathbf{A}(y) =  
\begin{pmatrix} 
    y & \cdots & y & 0 & 1 & 1 &\cdots & 1 \\
    y & \cdots & y & y & 0 & 1 & \cdots & 1 
\end{pmatrix}.
\end{equation*}
One can check that the above matrix satisfies the hypotheses of Theorem~\ref{thm:interlacing}, which implies that $\gamma^n_i$ interlaces $\gamma^n_{j}$.
\end{itemize}

For statement \ref{item:main-bot-row}, using similar arguments as in the proof of \ref{item:main-top-row} we obtain 
\[
\gamma^{n}_{i, \desc} = \omega(i) \: y \cdot  \sum_{j \leqslant \des(i)} \gamma^{n-1}_{j},
\]
for all $1 \leqslant i \leqslant \ell.$ This can be reformulated as 
\begin{equation*}
\pmb{\bm{\gamma}}^n_{\desc} = 
\mathbf{A}(y) \cdot \pmb{\bm{\gamma}}^{n-1}_{\asce}
\end{equation*}
where $\mathbf{A}(y) = \begin{pmatrix} a_{ij}(y) \end{pmatrix}$ is the matrix defined by 
\[ a_{ij} = 
\begin{cases}
    \omega(i) y & \text{ if } j \leqslant \des(i), \\
    0 & \text{ if } j > \des(i).
\end{cases}
\]
One can verify that the matrix $\mathbf{A}$ satisfies the conditions of \Cref{thm:interlacing} and so by Theorem~\ref{thm:interlacing}, the polynomials in the bottom row of diagram \eqref{eq:inductive-step-P} form an interlacing sequence. 
\smallskip

For statement \ref{item:main-top-to-bot}, let $1 \leqslant i,j \leqslant \ell$ be two integers. We distinguish between two cases.
\begin{itemize}[leftmargin=18pt]
\item If $\des(j) \leqslant \des(i)$, then we have 
\begin{equation*}\label{eq:gamma-i-j-topbot}
\begin{pmatrix}
\gamma^n_{i, \asce} &
\gamma^{n}_{j, \desc}
\end{pmatrix}^{\top}
=      
\mathbf{A}(y) \cdot 
\begin{pmatrix}
\gamma^{n-1}_{1, \asce}& \ldots& \gamma^{n-1}_{\des(j), \asce}&  \gamma^{n-1}_{\des(i) +1}& \ldots& \gamma^{n-1}_{\ell'}
\end{pmatrix}^{\top},  
\end{equation*}
where 
\begin{equation*}
    \mathbf{A}(y) = \begin{pmatrix}
0& \cdots & 0 & \omega(i) & \cdots & \omega(i) \\
\omega(j)y & \cdots & \omega(j)y & 0 & \cdots & 0
\end{pmatrix}.
\end{equation*}
Since the sequence of polynomials in the right hand side of \eqref{eq:gamma-i-j-topbot} is interlacing, by Theorem~\ref{thm:interlacing} we get $\gamma^{n}_{i, \asce} \preceq \gamma^{n}_{j, \desc}.$
\item If $\des(j) > \des(i)$, we have 
\begin{equation*}\label{eq:gamma-i-j-topbot-incrdesc}
\begin{pmatrix}
\gamma^n_{i, \asce} &
\gamma^{n}_{j, \desc}
\end{pmatrix}^{\top}
=      
\mathbf{A}(y) \cdot 
\begin{pmatrix}
\gamma^{n-1}_{1, \asce}& \ldots& \gamma^{n-1}_{\des(i), \asce}&  \gamma^{n-1}_{J, \asce}& \gamma^{n-1}_{J}, \gamma^{n-1}_{\des(j) +1}& \ldots& \gamma^{n-1}_{\ell'}
\end{pmatrix}^{\top},  
\end{equation*}
where $\gamma^{n-1}_{J, \asce}$ and $\gamma^{n-1}_{J}$ have been defined in the proof of \ref{item:main-mid-row}, and 
\begin{equation*}
    \mathbf{A}(y) = 
\begin{pmatrix}
0& \cdots & 0 & 0& \omega(i) & \omega(i) &\cdots & \omega(i) \\
\omega(j)y & \cdots & \omega(j)y & \omega(j)y& 0 & 0& \cdots & 0
\end{pmatrix}.
\end{equation*}
In the proof of \ref{item:main-mid-row} we have shown that the sequence of polynomials in the right hand side of \eqref{eq:gamma-i-j-topbot-incrdesc} is interlacing and so we can conclude by Theorem~\ref{thm:interlacing}. 
\end{itemize}

For statements \ref{item:main-top-to-mid} and \ref{item:main-mid-to-bot}, since we have $\gamma^{n}_{i} = \gamma^{n}_{i, \asce} + \gamma^{n}_{i, \desc}$ for all $1 \leqslant i \leqslant \ell$ the result follows from statements \ref{item:main-top-row}, \ref{item:main-bot-row}, \ref{item:main-top-to-bot} and convexity of interlacing polynomials (Proposition~\ref{prop:interlacing-properties}).
\end{proof}

\begin{proof}[Proof of \Cref{thm:gamma-RR-mono-rank-up-uniform}]
    The theorem follows from \Cref{lem:induction} and \Cref{prop:gamma-refinement}. 
\end{proof}

Our proof of \Cref{thm:gamma-RR-mono-rank-up-uniform} implies several interlacing relations among different Chow polynomials. In general, both the ($\gamma$-polynomials of the) Chow polynomials of $\P$ and $\P^{\ast}$ are conjectured to interlace the ($\gamma$-polynomials of the) the augmented Chow polynomials for matroids \cite[Conjecture 5.7]{ferroni-matherne-stevens-vecchi}. Li also conjectured that the ($\gamma$-polynomial of the) Chow polynomial of a matroid contracted at an atom interlaces the ($\gamma$-polynomial of the) Chow polynomial of the matroid. See \Cref{sec:open-questions}. The theorem below settles those conjectures for UMEL-shellable posets. In the case of Boolean lattices, the statement (iv) below recovers the statement that the roots of $n$th Eulerian polynomial and the $(n+1)$th Eulerian polynomial interlace \cite[Theorem 2.7]{savage2015}. 

\newtheorem*{thm:intro-interlacing}{Theorem~\ref{thm:main-interlacing-intro}}
\begin{thm:intro-interlacing}
    {\itshape 
    Let $\P$ be a UMEL-shellable poset. The following statements hold: 
    \begin{enumerate}[\normalfont(i)]
        \item The roots of $\uH_{\P}(x)$ and $\H_{\P}(x)$ interlace.
        \item For an atom $a$ of $\P$, the roots of $\H_{[a, \un]}(x)$ and $\uH_{\P}(x)$ 
        interlace. 
        \item For an atom $a$ of $\P$, the roots of $\H_{[a, \un]}(x)$ and $\H_{\P}(x)$ 
        interlace.
        \item For an atom $a$ of $\P$ under a mild condition (see \Cref{thm:gamma-interlacing-aug-contract}), the roots of $\uH_{[a, \un]}(x)$ and $\uH_{\P}(x)$ 
        interlace. 
    \end{enumerate}}
\end{thm:intro-interlacing}
The validity of the statement above is implied by the following theorem, by the fact that the $\gamma$-transformation preserves interlacing polynomials in \Cref{lem:gamma-transform-preserves-interlacing}. Note that we also add a condition under which the Chow polynomial of a contraction at an atom interlaces the Chow polynomial of the poset. 

\begin{maintheorem}\label[theorem]{thm:gamma-interlacing-aug-contract}
    Let $\P$ be a UMEL-shellable poset, and let $a$ be an atom of $\P$. We have the following interlacing relations. 
    \begin{enumerate}[\normalfont(i)]
        \item The $\gamma$-polynomial $\gamma(\uH_{\P}; y)$ interlaces the $\gamma$-polynomial $\gamma(\H_{\P}; y)$
    \[
        \gamma(\uH_{\P}; y) \preceq \gamma(\H_{\P}; y).  
    \]
        \item The $\gamma$-polynomial $\gamma(\H_{[a, \un]}; y)$ interlaces the $\gamma$-polynomial $\gamma(\uH_{\P}; y)$ 
        \[
        \gamma(\H_{[a, \un]}; y) \preceq \gamma(\uH_{\P}; y). 
        \]
        \item The $\gamma$-polynomial $\gamma(\H_{[a, \un]}; y)$ interlaces the $\gamma$-polynomial $\gamma(\H_{\P}; y)$
        \[
        \gamma(\H_{[a, \un]}; y) \preceq \gamma(\H_{\P}; y). 
        \]
    \end{enumerate}
    Furthermore, if $\des_0(\ell) = \ell'$, where $\ell$ (resp. $\ell'$) denotes the number of distinct labels above $\zero$ (resp. any atom), then we have 
    \begin{enumerate}
    \item[\normalfont(iv)] 
     The $\gamma$-polynomial $\gamma(\uH_{[a, \un]}; y)$ interlaces the $\gamma$-polynomial $\gamma(\uH_{\P}; y)$
        \[
        \gamma(\uH_{[a, \un]}; y) \preceq \gamma(\uH_{\P}; y). 
        \]
    \end{enumerate}
\end{maintheorem}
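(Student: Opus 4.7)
The plan is to derive the four interlacing relations by exploiting the interlacing families constructed in the proof of \Cref{lem:induction}, together with the convexity and linear-operator preservation tools from \Cref{sec:definitions}. Throughout, I would set $\omega(i):=\omega_0(i)$ and $\des(i):=\des_0(i)$, and introduce the weights $N_j:=\sum_{i:\des(i)<j}\omega(i)$ and $M_j:=\sum_{i:\des(i)\geqslant j}\omega(i)$; the monotonicity hypothesis makes $N_j$ weakly increasing and $M_j$ weakly decreasing in $j$.

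For (i), I would combine part (iv) of \Cref{lem:induction}, which says $\gamma^n_{i,\asce}\preceq \gamma^n_{j,\desc}$ for every $i,j$, with iterated application of the convexity parts of \Cref{prop:interlacing-properties} to obtain $\sum_i \gamma^n_{i,\asce}\preceq \sum_j\gamma^n_{j,\desc}$; the interpolation part of the same proposition then gives
\[
\gamma(\uH_{\P};y)=\sum_i\gamma^n_{i,\asce}\preceq \sum_i\gamma^n_{i,\asce}+\sum_j\gamma^n_{j,\desc}=\gamma(\H_{\P};y).
\]
For (ii), I would write $\gamma(\H_{[a,\un]};y)=\sum_j\gamma^{n-1}_j$ and, using the recursion from the proof of \Cref{lem:induction}(i), $\gamma(\uH_{\P};y)=\sum_j N_j\,\gamma^{n-1}_j$; \Cref{lem:two-minors-monotonicity-interlacing} applied to
\[
\begin{pmatrix}1&1&\cdots&1\\ N_1&N_2&\cdots&N_{\ell'}\end{pmatrix}
\]
acting on the interlacing sequence $(\gamma^{n-1}_j)_j$ does the job, since the relevant $2\times 2$ minors equal $N_{j'}-N_j\geqslant 0$.

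For (iii), the strategy is indirect. First I would establish the auxiliary interlacing $\gamma(\H_{[a,\un]};y)\preceq \sum_i \gamma^n_{i,\desc}$ by working in the concatenated family
\[
\bigl(\gamma^{n-1}_{1,\asce},\ldots,\gamma^{n-1}_{\ell',\asce},\gamma^{n-1}_{1,\desc},\ldots,\gamma^{n-1}_{\ell',\desc}\bigr),
\]
which is an interlacing sequence by \Cref{lem:induction}(i),(iii),(iv) applied at corank $n-1$. In this basis the two rows encoding $\gamma(\H_{[a,\un]})$ and $\sum_i\gamma^n_{i,\desc}$ have the shapes $(1,\ldots,1,1,\ldots,1)$ and $(yM_1,\ldots,yM_{\ell'},0,\ldots,0)$, and a case analysis verifies the pairwise conditions of \Cref{thm:interlacing} using only $M_j\geqslant 0$ and the monotonicity of $M$. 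Combining this intermediate interlacing with (ii) via \Cref{prop:interlacing-properties}(i) yields $\gamma(\H_{[a,\un]})\preceq \gamma(\uH_{\P})+\sum_i\gamma^n_{i,\desc}=\gamma(\H_{\P})$.

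For (iv), I would reuse the same concatenated interlacing family and apply \Cref{lem:two-minors-monotonicity-interlacing} to the $2\times 2\ell'$ matrix
\[
\begin{pmatrix}1&\cdots&1&0&\cdots&0\\ N_1&\cdots&N_{\ell'}&N_1&\cdots&N_{\ell'}\end{pmatrix},
\]
whose rows express $\gamma(\uH_{[a,\un]})=\sum_j\gamma^{n-1}_{j,\asce}$ and, via $\gamma^{n-1}_j=\gamma^{n-1}_{j,\asce}+\gamma^{n-1}_{j,\desc}$, $\gamma(\uH_{\P})=\sum_j N_j\,\gamma^{n-1}_j$. The $2\times 2$ minors reduce to $N_{j'}-N_j\geqslant 0$ and $N_j\geqslant 0$, both immediate from the recursive setup. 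The extra hypothesis $\des_0(\ell)=\ell'$ enters by forcing $\gamma^n_{\ell,\asce}=0$, which anchors the recursion at the top label and keeps the combinatorial identities for $\gamma(\uH_{\P})$ consistent at the boundary. The main technical obstacle is (iii): a naive application of \Cref{lem:two-minors-monotonicity-interlacing} directly to the row-pair $(\gamma(\H_{[a,\un]}),\gamma(\H_{\P}))$ fails because the corresponding $2\times 2$ minors cannot all be made nonnegative for arbitrary $N$ and $M$, so one must reroute through the intermediate polynomial $\sum_i\gamma^n_{i,\desc}$ as above.
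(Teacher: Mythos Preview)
Your argument for (i) matches the paper exactly. For (ii)--(iv) your approach is correct but takes a genuinely different route from the paper, and in one place yields a stronger conclusion.

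For (ii) and (iii), the paper's proof is much shorter: since the smallest label above $\zero$ can never start a descent, one has $\des_0(1)=0$ and hence $\gamma^n_{1,\asce}=\gamma^n_1=\omega_0(1)\,\gamma(\H_{[a,\un]};y)$. Then (ii) follows immediately from convexity applied to the top row of the diagram in \Cref{lem:induction}, and (iii) from convexity applied to the middle row. Your $N_j$/$M_j$ machinery is valid but unnecessary; recognizing $\gamma(\H_{[a,\un]})$ as (a scalar multiple of) the first entry of an already-built interlacing family avoids the auxiliary computations.

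For (iv), the paper uses the hypothesis $\des_0(\ell)=\ell'$ to identify $\gamma^n_{\ell}$ with $\omega_0(\ell)\,y\,\gamma(\uH_{[a,\un]};y)$, and then deduces $\gamma(\uH_\P)\preceq y\,\gamma(\uH_{[a,\un]})$ from the diagram. Your argument via the concatenated family $(\gamma^{n-1}_{j,\asce})_j,(\gamma^{n-1}_{j,\desc})_j$ and the $2\times 2\ell'$ matrix with rows $(1,\ldots,1,0,\ldots,0)$ and $(N_1,\ldots,N_{\ell'},N_1,\ldots,N_{\ell'})$ is correct: the family is interlacing by \Cref{lem:induction}, all $2\times 2$ minors are $N_{j'}-N_j\geq 0$, $N_k\geq 0$, or $0$, and the two rows compute $\gamma(\uH_{[a,\un]})$ and $\gamma(\uH_\P)$ respectively. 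But notice that this argument \emph{never uses the hypothesis} $\des_0(\ell)=\ell'$. Your sentence about the hypothesis ``anchoring the recursion at the top label'' is not a real justification; the identity $\gamma(\uH_\P)=\sum_j N_j\gamma^{n-1}_j$ holds unconditionally. In other words, your proof establishes (iv) for all UMEL-shellable posets without any extra assumption, which is strictly stronger than what the paper states. You should either drop the claimed explanation of the hypothesis or explicitly note that your argument removes it.
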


\begin{proof}Let $n$ be the rank of $\P$. \leavevmode
\begin{enumerate}[\normalfont(i),leftmargin=18pt]
    \item By Lemma~\ref{lem:induction}, the sequence $\begin{pmatrix}
    \gamma^n_{1, \asce} & \cdots & \gamma^n_{\ell, \asce} & \gamma^n_{1, \desc} & \cdots & \gamma^n_{\ell, \desc}
    \end{pmatrix}$ is interlacing. By \Cref{prop:interlacing-properties} (Convexity),
    \[
    \left(\sum_{i} \gamma^n_{i, \asce}\right) \: \preceq \: \left(\sum_{j} \gamma^n_{j, \desc}\right). 
    \]
    By \Cref{prop:interlacing-properties} (Convexity) again, 
    \[
    \sum_{i} \gamma^n_{i, \asce} \: \preceq \: \left(\sum_{i} \gamma^n_{i, \asce} + \sum_{j} \gamma^n_{j, \desc} \right), 
    \]
    Therefore, \Cref{prop:gamma-refinement} implies that $\gamma(\uH_{\P}; y) \preceq \gamma(\H_{\P}; y).$

    \item By Lemma~\ref{lem:induction}, the sequence $(\gamma^{n}_{1,\asce},\ldots, \gamma^{n}_{\ell, \asce})$ is interlacing. By \Cref{prop:interlacing-properties} (Convexity),
    \[
    \gamma^{n}_{1, \asce} \preceq \sum_{1 \leqslant i \leqslant \ell}\gamma^n_{i, \asce} = \gamma(\uH_{\P}; y),
    \]
    where the last equality comes from Proposition~\ref{prop:gamma-refinement}. However, there cannot be a descent starting from the smallest label above $\zero$, as this would contradict the lexicographic minimality of the unique maximal chain with increasing labels. This implies the equality $\gamma^n_{1, \asce} = \gamma(\H_{[a, \un]};y)$ which gives the second statement. 
    
    \item By Lemma~\ref{lem:induction}, the sequence $(\gamma^{n}_{1},\ldots, \gamma^{n}_{\ell})$ is interlacing. By convexity of interlacing we obtain 
    $$\gamma^{n}_{1} \preceq \sum_{1 \leqslant i \leqslant \ell}\gamma^n_{i} = \gamma(\H_{\P}; y).$$
    However, by the same argument as in the preceding paragraph we have $\gamma^{n}_{1} = \gamma(\H_{[a, \un]}; y)$ which gives the result. 

    \item Let us assume $\des_0(\ell) = \ell'.$ This implies that we have $\gamma^n_{\ell} = y\,\gamma(\H_{[a, \un]};y)$. By Lemma~\ref{lem:induction} the sequence $(\gamma^n_{1, \asce}, \ldots, \gamma^{n}_{\ell, \asce}, \gamma^{n}_{\ell})$ is interlacing, and so by convexity this implies
    $$\sum_{1 \leqslant i \leqslant \ell}\gamma^n_{i, \asce} = \gamma(\uH_{\P}; y) \preceq \gamma^n_{\ell} = y\,\gamma(\uH_{[a, \un]};y),$$
    which is equivalent to the desired result. \qedhere
\end{enumerate}
\end{proof}

Note that the condition $\des_0(\ell) = \ell'$, or, in other words, the condition that there is no ascent starting from the biggest label above $\zero$, does not have to be true in general, as Example~\ref{ex:diamond-poset} shows. However, that condition is always verified when $\P$ is atomistic. Indeed in that case, if $a$ is an atom with maximal label then for all $a \lessdot t$ in $\P$ by atomisticity of $\P$ the element $t$ is above another atom $a'.$ By definition of $a$ we have $\lambda(\zero, a') \trianglelefteqslant \lambda(\zero, a)$. If $\lambda(\zero, a') \vartriangleleft \lambda(\zero, a)$ then by minimality of the unique maximal chain with increasing labels between $\zero$ and $t$ we must have $\lambda(\zero, a) \vartriangleright \lambda(a, t).$ If $\lambda(\zero, a') = \lambda(\zero, a)$, then we must also have $\lambda(\zero, a) \vartriangleright \lambda(a, t).$ Indeed, if the contrary was true, by the uniqueness of the maximal chain with increasing labels between $\zero$ and $t$ we would have $\lambda(\zero, a') \vartriangleright \lambda(a, t),$ which would contradict the lexicographic minimality of that maximal chain. 

\section{Rank-uniform semimodular supersolvable lattices}
\label{sec:supersolvable}
In this section, we explore the interactions between the notion of rank-uniform labeled posets introduced in this article and the theory of supersolvable lattices (see Definition~\ref{def:supersolvable}) in the special case of semimodular lattices (see Definition \ref{def:semimodular}). The main result of this section shows that all rank-uniform semimodular supersolvable lattices are UMEL-shellable (Theorem~\ref{thm:main4-intro}), and so we may specialize Theorem~\ref{thm:main2-intro} and Theorem~\ref{thm:gamma-RR-mono-rank-up-uniform} to them. This parallels the classical result that characteristic polynomials of supersolvable geometric lattices are real-rooted (see \cite{Stanley1971ModularElements}). We start with some preliminaries.  
\begin{definition}
A finite poset $\Latt$ is called a \emph{lattice} if every pair of elements $s, t \in \Latt$ has a unique supremum in $\Latt$ (denoted $s \vee t$ and called the \emph{join} of $s$ and $t$), and a unique infimum in $\Latt$ (denoted $s \wedge t$ and called the \emph{meet} of $s$ and $t$).
\end{definition} 

\begin{definition}
A pair $(s, t)$ in a lattice $\Latt$ is called \textit{modular} if for all $u \leqslant t \in \Latt$ we have the equality 
\begin{equation*}
    u \vee (s \wedge t) = (u \vee s) \wedge t.
\end{equation*}
An element $s$ in a lattice $\Latt$ is called \textit{left modular} (resp. \textit{right modular}) if for all $t \in \Latt$, the pair $(s,t)$ (resp. $(t,s)$) is modular. An element is called \textit{modular} if it is both left and right modular. A finite lattice is \emph{modular} if all its elements are modular.   
\end{definition}

Modular elements satisfy the following lemma, which we will need later.   
\begin{lemma}[\cite{birkhoff1940lattice}, Section IV.2]\label[lemma]{lemmaisodiamond}
For any elements $s, t$ in some lattice $\Latt$, if $s$ is modular then taking the join with $t$ defines an isomorphism of posets from $[s \wedge t, s]$ to $[t, s\vee t]$, with inverse given by taking the meet with $s$. 
\end{lemma}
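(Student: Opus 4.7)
The plan is to verify by direct computation that the two maps
\[
\varphi \colon [s\wedge t,\, s] \longrightarrow [t,\, s\vee t], \qquad u \longmapsto u\vee t,
\]
\[
\psi \colon [t,\, s\vee t] \longrightarrow [s\wedge t,\, s], \qquad v \longmapsto v\wedge s,
\]
are well-defined, order-preserving, and mutually inverse. First I would check well-definedness: if $s\wedge t\leqslant u\leqslant s$, then $t=t\vee(s\wedge t)\leqslant t\vee u\leqslant t\vee s$, so $\varphi(u)\in[t,s\vee t]$; dually, if $t\leqslant v\leqslant s\vee t$, then $s\wedge t\leqslant s\wedge v\leqslant s\wedge(s\vee t)=s$, so $\psi(v)\in[s\wedge t,s]$. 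Order-preservation is automatic since join and meet are monotone in each argument.

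For $\psi\circ\varphi=\mathrm{id}$, fix $u\in[s\wedge t,s]$; the assumption $u\leqslant s$ allows us to invoke modularity of the pair $(t,s)$, which gives
\[
(u\vee t)\wedge s \;=\; u\vee (t\wedge s) \;=\; u,
\]
where the last equality uses $s\wedge t\leqslant u$. For $\varphi\circ\psi=\mathrm{id}$, fix $v\in[t,s\vee t]$; now $t\leqslant v$, so modularity of the pair $(s,v)$ (applied with $t$ in the role of the variable bounded above by $v$) yields
\[
(v\wedge s)\vee t \;=\; t\vee(s\wedge v) \;=\; (t\vee s)\wedge v \;=\; v,
\]
where the last step uses $v\leqslant s\vee t$. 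Both calculations rely crucially on $s$ being modular, i.e., on both pairs $(t,s)$ and $(s,v)$ being modular; this is the only place the hypothesis enters.

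I do not expect any substantial obstacle here: once the maps are written down, the content is a one-line use of the modular identity on each side, and the rest is bookkeeping about which endpoints lie where. The only subtlety is keeping track of which modular pair is used in each direction, since the definition given in the excerpt is asymmetric in $s$ and $t$.
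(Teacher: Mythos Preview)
Your proof is correct: the well-definedness and monotonicity checks are routine, and the two computations $(u\vee t)\wedge s=u$ and $(v\wedge s)\vee t=v$ follow exactly as you indicate from the modularity of the pairs $(t,s)$ and $(s,v)$ respectively, both of which are guaranteed by $s$ being a modular element. The paper itself does not give a proof of this lemma---it is simply quoted from Birkhoff with a citation---so there is no paper argument to compare against; your direct verification is the standard one.
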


The following notion was introduced by Stanley in \cite{stanley1972supersolvable}. 
\begin{definition}\label{def:supersolvable}
A finite lattice $\Latt$ is \textit{supersolvable} if it is graded and if $\Latt$ admits a maximal chain of left modular elements. 
\end{definition}
This is not Stanley's original definition but is equivalent to it by \cite[Theorem 1.1]{FoldesWoodroofe2022}. Supersolvable lattices appear in several areas of mathematics. The motivation behind the terminology comes from the fact that this class of posets contains lattices of subgroups of finite supersolvable groups \cite{stanley1972supersolvable}. Furthermore, from the definition it is clear that modular lattices are supersolvable. For instance, the poset  $\P(\F_q^n)$ consisting of all subspaces of the vector space $(\F_q)^{n}$ over the finite field $\F_q$ ordered by inclusion is modular and so it is supersolvable---the poset $\P(\F_q^n)$ is often called a \emph{projective geometry} or a \emph{vector space lattice}. Further examples of supersolvable lattices in other areas of combinatorics include the non-crossing partition lattices of type A and the class of shuffle posets (see \cite[Chapter~4]{hersh-thesis}). 

In hyperplane arrangement theory, a flat $F$ in the intersection lattice $\L_{\Harr}$ of a complex hyperplane arrangment $\Harr$ is modular if and only if the natural map $\Arr_{\Harr} \rightarrow \Arr_{\Harr^F}$ between the arrangement complement $\Arr_{\Harr}$ of $\Harr$ and the arrangement complement $\Arr_{\Harr^F}$ of the localization $\Harr^F$ is a fibration (see \cite{Paris_2000} for the proof of that result, and see \cite{OT_1992} for a general reference on hyperplane arrangements). Thus, an intersection lattice $\L_{\Harr}$ is supersolvable if and only if there is a tower of codimension $1$ fibrations between $\Arr_{\Harr}$ and $\C^{\ast}$, induced by successive localizations. For instance, the intersection lattice of the $n$-th (type A) braid arrangement is supersolvable. Indeed the arrangement complement of the $n$-th braid arrangement is the $n$-th configuration space $\Conf_n(\C)$ of $\C$, and we have a tower of codimension $1$ fibrations between $\Conf_n(\C)$ and $\C^{\ast}$ given by forgetting points one after the other. The intersection lattice of the $n$-th braid arrangement is isomorphic to the $n$-th partition lattice, which is simply the set of all set partitions of $[n]$ ordered by refinement. \\

In the rest of this paper we will restrict our attention to supersolvable lattices which satisfy in addition the following property. 
\begin{definition}\label{def:semimodular}
A graded finite lattice $\Latt$ with rank function $\rk$ is called semimodular if we have the inequality 
$$\rk(s\vee t) + \rk(s\wedge t) \leq \rk(s) + \rk(t)$$
for all $s,t \in \Latt.$
\end{definition}
In a semimodular lattice, left and right modularity are equivalent (see \cite[p. 83]{birkhoff1940lattice}) and so if a supersolvable lattice is semimodular, then its maximal chain of left modular elements is in fact composed of modular elements. \\

It has been shown by Stanley that supersolvability is a hereditary property. 
\begin{lemma}[{\cite[Proposition 3.2]{stanley1972supersolvable}}]\label[lemma]{lem:hereditySS}
Let $\Latt$ be a semimodular supersolvable lattice with maximal chain of modular elements $(m_i)_{0\leqslant i \leqslant n}$. For any $s \in \Latt$, the set of elements 
\[
s \vee m \coloneqq \{s \vee m_0, \ldots, s \vee m_n\}
\]
forms a maximal chain of modular elements of the interval $[s, \un],$ and the set of elements 
\[
s \wedge m \coloneqq \{s \wedge m_0, \ldots, s \wedge m_n\}
\] forms a maximal chain of modular elements of the interval $[\zero, s]$.  
\end{lemma}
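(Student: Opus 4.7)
The statement is the classical result of Stanley (\cite{stanley1972supersolvable}, Proposition~3.2); the plan is to outline its proof following the standard arguments of modular-lattice theory. By the self-duality of modularity, it suffices to prove the first assertion about $\{s \vee m_i\}$ in $[s, \un]$; the statement for $\{s \wedge m_i\}$ in $[\zero, s]$ will follow by applying the same argument to the opposite lattice $\Latt^{*}$.

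First I would show that consecutive terms of the weakly increasing sequence $s = s \vee m_0 \leqslant s \vee m_1 \leqslant \cdots \leqslant s \vee m_n = \un$ either coincide or stand in a covering relation. Applying \Cref{lemmaisodiamond} to the modular element $m_{i+1}$ with auxiliary element $s$, one obtains an isomorphism of posets
\[
\phi \colon [m_{i+1} \wedge s,\, m_{i+1}] \;\longrightarrow\; [s,\, s \vee m_{i+1}], \qquad x \mapsto x \vee s,
\]
with inverse given by meeting with $m_{i+1}$. The preimage of $s \vee m_i$ under $\phi$ is $(s \vee m_i) \wedge m_{i+1} = m_i \vee (s \wedge m_{i+1})$, where the second equality comes from modularity of the pair $(s, m_{i+1})$ applied with $u = m_i \leqslant m_{i+1}$. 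This preimage lies in $[m_i, m_{i+1}]$, and since $m_i \lessdot m_{i+1}$ it equals either $m_i$ or $m_{i+1}$; applying $\phi$ then gives either $s \vee m_i \lessdot s \vee m_{i+1}$ or $s \vee m_i = s \vee m_{i+1}$. A rank-counting argument over the whole sequence then confirms that the distinct terms form a maximal chain in $[s, \un]$.

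The remaining task is to verify that each $s \vee m_i$ is a modular element of $[s, \un]$. For any $t \in [s, \un]$, both modular pair identities $(s \vee m_i, t)$ and $(t, s \vee m_i)$ can be reduced to identities that follow from modularity of $m_i$ in $\Latt$, using two main ingredients: the identity $(s \vee m_i) \wedge t = s \vee (m_i \wedge t)$ (which holds because $m_i$ is modular and $s \leqslant t$), and the parametrization $u = s \vee (u \wedge m_i)$ of elements $u \in [s, s \vee m_i]$ coming from \Cref{lemmaisodiamond}. The key auxiliary computation is the identity
\[
m_i \wedge (u \vee t) \;=\; (u \wedge m_i) \vee (m_i \wedge t),
\]
valid whenever $u \in [s, s \vee m_i]$ and $t \in [s, \un]$; it follows by first rewriting $u \vee t = (u \wedge m_i) \vee t$ (using $u = s \vee (u \wedge m_i)$ and $s \leqslant t$) and then applying modularity of the pair $(t, m_i)$ with the element $u \wedge m_i \leqslant m_i$. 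This verification step is the main technical obstacle: although it reduces entirely to standard modular-lattice manipulations, the algebra is delicate and requires care in pushing $s$ through the various meets and joins.
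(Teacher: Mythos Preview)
The paper does not give its own proof of this lemma: it is stated with the attribution \cite[Proposition~3.2]{stanley1972supersolvable} and used as a black box, so there is nothing in the paper to compare your argument against.

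That said, your outline is a correct sketch of the classical argument. The maximal-chain step is fine: the preimage computation $(s\vee m_i)\wedge m_{i+1}=m_i\vee(s\wedge m_{i+1})\in[m_i,m_{i+1}]$ via modularity of $m_{i+1}$, combined with the diamond isomorphism of \Cref{lemmaisodiamond}, does force $s\vee m_i$ and $s\vee m_{i+1}$ to either coincide or be in a covering relation. For the modularity step, the two ingredients you isolate are exactly what is needed: the identity $(s\vee m_i)\wedge t = s\vee(m_i\wedge t)$ for $t\geqslant s$, and your auxiliary identity $m_i\wedge(u\vee t)=(u\wedge m_i)\vee(m_i\wedge t)$ for $u\in[s,s\vee m_i]$, together reduce both modular-pair conditions for $s\vee m_i$ in $[s,\un]$ to the modularity of $m_i$ in $\Latt$. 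You have not written out those final reductions, but they are short once the two identities are in hand, so this is a sketch rather than a gap.
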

Let $\Latt$ be a semimodular supersolvable lattice with maximal chain of modular elements 
\[
\zero = m_0 \lessdot \cdots  \lessdot m_n = \un.
\]
As explained in \cite[Section 3]{bjorner-el-shellable} one can define an EL-labeling $\lambda_m$ on $\Latt$ by setting 
\begin{equation*}
    \lambda_{m}:\Edge(\Latt) \rightarrow [n], \quad \lambda_{m}(s, t) \coloneqq \min \{ i \, |\, s \vee m_i \geqslant t \}.
\end{equation*}
We will call this labeling the \emph{supersolvable labeling} associated to $\Latt$ and the maximal chain $m$. We will need the two lemmas below about supersolvable labelings. 

\begin{lemma}\label[lemma]{lem:descentSS}
Let $\Latt$ be a semimodular supersolvable lattice with maximal chain of modular elements $(m_i)_{0\leqslant i \leqslant n}$ and associated supersolvable labeling $\lambda_m$. For every atom $s$ of $\Latt$, 
\[
\des(\zero, s) = \Ind(\zero, s) - 1.
\]
\end{lemma}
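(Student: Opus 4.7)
The plan is to show that the set of label values $j < k$ that appear as $\lambda_m(s,t)$ for some cover $s \lessdot t$ coincides with the set of label values $j < k$ that appear as $\lambda_m(\zero, a)$ for some cover $\zero \lessdot a$, where $k \coloneqq \lambda_m(\zero, s)$. Since the cardinalities of these two sets are exactly $\des(\zero, s)$ and $\Ind(\zero, s) - 1$, the lemma will follow at once.

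The main tool will be the following isomorphism. For every $j \in \{0, 1, \dots, k-1\}$ one has $s \not\leqslant m_j$ (by definition of $k$), and since $s$ is an atom this forces $s \wedge m_j = \zero$. Applying \Cref{lemmaisodiamond} to the modular element $m_j$ (with $t=s$) then yields a poset isomorphism
\[
\phi_j \colon [\zero, m_j] \;\xrightarrow{\;\sim\;}\; [s,\, s \vee m_j], \qquad x \longmapsto x \vee s,
\]
with inverse $y \mapsto y \wedge m_j$. I would then observe that $\phi_j$ is rank-preserving, hence restricts to a bijection between the atoms of $[\zero, m_j]$ and the atoms of $[s, s \vee m_j]$. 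Moreover, for $j \geqslant 1$, the isomorphism $\phi_j$ restricts to $\phi_{j-1}$ on the subinterval $[\zero, m_{j-1}]$, so the atoms of $[\zero, m_{j-1}]$ and the atoms of $[s, s \vee m_{j-1}]$ match up compatibly under these bijections.

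Fixing $j \in \{1, \dots, k-1\}$, I will then unpack the definition of $\lambda_m$: the value $j$ appears as a label above $\zero$ if and only if there exists an atom of $\Latt$ below $m_j$ but not below $m_{j-1}$, i.e.\ if and only if $|\At([\zero, m_j])| > |\At([\zero, m_{j-1}])|$; analogously, $j$ appears as a label above $s$ if and only if there is a cover $s \lessdot t$ with $t \leqslant s \vee m_j$ and $t \not\leqslant s \vee m_{j-1}$, which is if and only if $|\At([s, s \vee m_j])| > |\At([s, s \vee m_{j-1}])|$. The bijections coming from $\phi_j$ and $\phi_{j-1}$ make these two strict inequalities equivalent, which yields the desired equality of label sets. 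The only real work is the verification of the isomorphism $\phi_j$, which is a direct application of \Cref{lemmaisodiamond} once one notices that $s \wedge m_j = \zero$ for $j < k$; everything else is a matter of unpacking definitions, and the degenerate case $k = 1$ is trivial since then both sides of the claimed identity are zero.
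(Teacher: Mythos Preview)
Your proof is correct and follows essentially the same strategy as the paper's: both arguments hinge on the isomorphism $[\zero, m_j] \cong [s, s\vee m_j]$ obtained from \Cref{lemmaisodiamond} together with the observation that $s \wedge m_j = \zero$ for $j < k = \lambda_m(\zero,s)$. The paper applies this with $j = k-1$ once and explicitly computes that $\lambda_m(s, s \vee s_i) = \lambda_i$ for each atom $s_i$ of label $\lambda_i < k$; you instead invoke the isomorphisms for all $j \leqslant k-1$ and argue by comparing atom counts in $[\zero, m_j]$ versus $[\zero, m_{j-1}]$, which sidesteps the explicit label computation at the cost of a mildly more indirect phrasing. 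The underlying idea is the same.
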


\begin{proof}
Denote by $\lambda_1 \vartriangleleft \lambda_2 \vartriangleleft \cdots $ the labels of the atoms of $\Latt.$ For all $i < \Ind(\zero, s)$ there exists an atom $s_i$ of $\Latt$ such that $\lambda_m(\zero, s_i) = \lambda_i$. By modularity of $m_{\lambda_m(\zero,s)-1}$ and Lemma~\ref{lemmaisodiamond}, taking the join with $s$ defines an isomorphism from $[\zero, m_{\lambda_m(\zero,s)-1}]$ to $[s, m_{\lambda_m(\zero,s)-1}]$. This implies that for all $i < \Ind(\zero, s)$ the element $s\vee s_i$ must cover $s$, and $s_i$ is the unique element below $m_{\lambda_m(\zero,s)-1}$ such that its join with $s$ is $s\vee s_i$. This implies that we have $\lambda_m(s, s \vee s_i) = \lambda_i$. This proves that every label $\lambda_i$ with $i < \Ind(\zero, s)$ appears as a label above $s$, and so we have the inequality $\des(\zero, s) \geqslant \Ind(\zero, s) - 1.$ 

For the converse inequality, if $s\lessdot t$ is a covering relation such that $\lambda_m(s,t) \vartriangleleft \lambda_m(\zero, s)$ by the definition of the supersolvable labeling $\lambda_m$ this means that $m_{\lambda_m(\zero,s)-1}\vee s \geqslant t$ and so by Lemma~\ref{lemmaisodiamond} we get the existence of an atom of $\Latt$ having label $\lambda_m(s,t)$. This proves that every label appearing as a descent above $s$ also appears as the label of an atom of $\Latt$ and so we get $\des(\zero, s) \leqslant \Ind(\zero, s) - 1.$
\end{proof}
\begin{lemma}\label[lemma]{lem:samelabelSS}
Let $\Latt$ be a semimodular supersolvable lattice with maximal chain of modular elements $(m_i)_{0\leqslant i \leqslant n},$ and let $s$ be some element of $\Latt.$ There exists a monotonic injection 
\[
\phi_s: [n - \rk s] \hookrightarrow [n]
\] such that for every covering relation $t \lessdot u \in [s, \un]$ we have the equality 
\[
\phi_s(\lambda_{s\vee m}(t,u)) = \lambda_m(t,u).
\] In particular $([s, \un], (\lambda_m)_{s,\un})$ and $([s,\un], \lambda_{s\vee m})$ have the same width and descent numbers. 
\end{lemma}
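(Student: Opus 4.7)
The plan is to construct $\phi_s$ by deduplicating the sequence $s \vee m$ and then to verify that the two labelings are related by composition with $\phi_s$ on covering relations. By Lemma~\ref{lem:hereditySS}, the sequence $s = s \vee m_0 \leqslant s \vee m_1 \leqslant \cdots \leqslant s \vee m_n = \un$ consists of modular elements of $[s,\un]$, but with possible repetitions. Let $I = \{i_1 < i_2 < \cdots < i_{n-\rk s}\} \subseteq [n]$ be the set of indices at which the sequence strictly increases, i.e.\ $s \vee m_{i_k - 1} \lessdot s \vee m_{i_k}$. After removing duplicates we obtain a maximal chain of modular elements in $[s,\un]$ of length $n - \rk s$, so $|I| = n - \rk s$, and I would define $\phi_s \colon [n - \rk s] \to [n]$ by $\phi_s(k) = i_k$. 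This map is strictly increasing and hence a monotonic injection.

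The key step is to verify the identity on covering relations. For any covering relation $t \lessdot u$ in $[s,\un]$, the inequality $t \geqslant s$ yields $t \vee (s \vee m_i) = t \vee m_i$ for every $i$, so
\[
\lambda_{s \vee m}(t, u) \;=\; \min\bigl\{\, k : t \vee m_{i_k} \geqslant u \,\bigr\} \quad \text{and} \quad \lambda_m(t, u) \;=\; \min\bigl\{\, i : t \vee m_i \geqslant u \,\bigr\}.
\]
The essential observation is that whenever $i \notin I$, one has $s \vee m_{i-1} = s \vee m_i$, and joining with $t$ preserves the equality: $t \vee m_{i-1} = t \vee s \vee m_{i-1} = t \vee s \vee m_i = t \vee m_i$. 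Consequently the weakly increasing sequence $(t \vee m_i)_{i=0}^{n}$ can strictly increase only at indices inside $I$, and stays constant on each block $\{i_{k-1}+1, \ldots, i_k\}$. It then follows directly that $\lambda_m(t, u) = i_k$ if and only if $\lambda_{s \vee m}(t, u) = k$, which is the required identity $\phi_s(\lambda_{s \vee m}(t,u)) = \lambda_m(t,u)$.

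The equality of widths and descents between $([s, \un], (\lambda_m)_{s, \un})$ and $([s,\un], \lambda_{s \vee m})$ is then a formal consequence of $\phi_s$ being a strictly monotonic injection. For any $t \in [s, \un]$, the covers of $t$ sharing a common label under $\lambda_{s \vee m}$ are exactly the covers of $t$ sharing the corresponding common label (via $\phi_s$) under $(\lambda_m)_{s, \un}$, and the order on the labels above $t$ is preserved by $\phi_s$, so the widths $\omega_t(i)$ match at each index $i$. Similarly, the descent status of a flag $t \lessdot u \lessdot v$ depends only on the comparison of two labels, a comparison which is preserved by the monotonic $\phi_s$, so the descent counts $\des_t(u)$ agree for both labelings. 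I expect the only potentially delicate point to be the verification of the identity $\phi_s(\lambda_{s \vee m}(t,u)) = \lambda_m(t,u)$; beyond that, the argument is a matter of careful index bookkeeping.
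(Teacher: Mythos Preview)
Your proof is correct and follows essentially the same strategy as the paper: your map $\phi_s(k)=i_k$ coincides with the paper's $\phi_s(i)=\min\{j:s\vee m_j=m^s_i\}$, and your verification of the key identity---via the observation that $(t\vee m_i)_i$ can strictly increase only at indices in $I$ since $t\geqslant s$ forces $t\vee m_i=t\vee(s\vee m_i)$---is a clean variant of the paper's two-inequality argument.

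One small correction: the blocks of constancy are $\{i_{k-1},i_{k-1}+1,\ldots,i_k-1\}$, not $\{i_{k-1}+1,\ldots,i_k\}$, since the sequence is allowed to jump precisely at $i_k$. This off-by-one does not affect your conclusion, because the statement you actually use---that $\min\{i:t\vee m_i\geqslant u\}$ must lie in $I$---follows directly from the first half of your sentence.
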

\begin{proof}
Denote by $m^s_0 = s \lessdot \cdots \lessdot \un = m^s_{n- \rk s}$ the elements of the set $s\vee m$ (see Lemma~\ref{lem:hereditySS}). For all $i\in [n - \rk s]$ put $\phi_s(i) \coloneqq \min \{ j \in [n] \, | \, m_j \vee s = m^s_i\}$. One can see that $\phi_s$ is a monotonic injection from $[n - \rk(s)]$ to $[n]$. Let us prove the equality $\lambda_m(t,u) =  \phi_s(\lambda_{s\vee m}(t,u))$ for all covering relation $t \lessdot u$ in $[s, \un]$, by proving the inequality in both direction. 

Denote by $k$ the integer such that $s\vee m_{\lambda_m(t,u)} = m^s_k$. Since $m^x_k\vee t = m_{\lambda(t,u)}\vee t \geqslant u,$ we have $k \geqslant \lambda_{s\vee m}(t,u).$ Furthermore we have $\phi_s(k) = \lambda_m(t,u),$ because if we had $m_{\lambda(t,u) -1} \vee s = m^s_k,$ then  we would have $m_{\lambda(t,u)-1} \vee t \geqslant u,$ contradicting the definition of $\lambda_m(t,u).$ By monotonicity of $\phi^s$ this implies that we have $\lambda_m(t,u) = \phi^s(k) \trianglerighteqslant \phi^s( \lambda_{s\vee m}(t,u)).$ 

On the other hand by definition we have $m^s_{\lambda_{s\vee m}(s,t)}\vee t \geqslant u,$ which implies that we have $m_{\phi^s(\lambda_{s\vee m}(t,u))}\vee t \geqslant u$ which implies that we have $\lambda_m(t,u) \trianglelefteqslant \phi^s(\lambda_{s\vee m}(t,u)).$
\end{proof}

If $\Latt$ is a semimodular supersolvable lattice with maximal chain of modular elements $(m_i)_{0\leqslant i \leqslant n}$, notice that Lemma~\ref{lemmaisodiamond} implies that for any atom $s\gtrdot \zero$ which is not below $m_{n-1}$ we have an isomorphism of posets $[\zero, m_{n-1}] \simeq [s, \un]$ given by taking the join with $s$, and this isomorphism preserves the induced labelings on both sides. This means that semimodular supersolvable lattices with their supersolvable labelings are at least partially rank-uniform, because for instance $\omega_{\medbullet}(i)$ is constant on rank $1$ elements not below $m_{n-1},$ for all $i$. However, they do not have to be rank-uniform in general. The following theorem shows that for semimodular supersolvable lattices, in order to check lexicographic rank-uniformity one only needs to check classical rank-uniformity, and in that case the labeling is automatically monotonic. A special subclass of rank-uniform supersolvable geometric lattices was studied in detail in~\cite{Damiani1994}.

\newtheorem*{thm:intro4}{Theorem~\ref{thm:main4-intro}}
\begin{thm:intro4}
{\itshape Every rank-uniform semimodular supersolvable lattice is UMEL-shellable.}
\end{thm:intro4}
\begin{proof}

We first check the uniformity of widths. For all $0 \leqslant k \leqslant n$ let us denote by $\omega^{k}$ the width in the labeled poset $([\zero, m_{k}], \lambda_m)$. Let us prove by induction on the rank $n$ of $\Latt$ that for any atom $s$ of $\Latt$ we have $\omega_s= \omega^{n-1}_{\zero}$. For $s \not\leqslant m_{n-1}$ as explained in the previous paragraph this is a consequence of the isomorphism $[s, \un] \simeq [\zero, m_{n-1}]$ given by Lemma~\ref{lemmaisodiamond}. That isomorphism also implies that $[\zero, m_{n-1}]$ is a rank-uniform lattice, which is also supersolvable by Lemma~\ref{lem:hereditySS}. This means that by induction we can assume that we have the equality $\omega^{n-1}_s = \omega^{n-2}_{\zero}$ for every atom $s$ of $[\zero, m_{n-1}].$ Consequently, for every atom $s$ of $\Latt$ below $m_{n-1}$ and every $i \leqslant n-2$ we have 
\begin{equation*}
    \omega_{s}(i) = \omega^{n-1}_s(i) = \omega^{n-2}_{\zero}(i) = \omega^{n-1}_{\zero}(i).
\end{equation*}
For every $i \geqslant n$ we have $\omega_s(i) = 0 = \omega^{n-1}_{\zero}(i)$ by Lemma~\ref{lem:samelabelSS}. By rank-uniformity every atom of $s$ is covered by the same number of elements which is $\sum_{1 \leqslant i \leqslant n-1} \omega_{\zero}(i).$ This implies that we must have $\omega_{s}(n-1) = \omega^{n-1}_{\zero}(n-1),$ which concludes the proof. By a direct induction one gets that for all $s \in \Latt$ one has the equality $\omega_{s} = \omega^{n-\rk(s)}_{\zero}$ which gives the desired uniformity. 

By Lemmas~\ref{lem:descentSS}~and~\ref{lem:samelabelSS}, for all $s \lessdot t \in \Latt$ one has $\des(s,t) = \Ind(s,t) - 1$. This proves both the uniformity of the descent numbers of $(\Latt, \lambda_m)$ and the monotonicity of $(\Latt, \lambda_m).$
\end{proof}

\begin{corollary}
Let $\Latt$ be a rank-uniform semimodular supersolvable lattice. The $h$-polynomial of the order complex of  $\Latt$, the Chow polynomials of $\Latt$ and $\Latt^{\ast}$, and the augmented Chow polynomials of $\Latt$ and $\Latt^{\ast}$ are real-rooted.
\end{corollary}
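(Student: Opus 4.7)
The plan is to combine Theorem~\ref{thm:main4-intro}, which we just established, with the main real-rootedness results of the paper. Since a rank-uniform supersolvable lattice $\Latt$ is UMEL-shellable, Theorem~\ref{thm:main1-intro} immediately yields the real-rootedness of $\uH_{\Latt}(x)$ and $\H_{\Latt}(x)$, and Theorem~\ref{thm:main2-intro} yields the real-rootedness of the $h$-polynomial of $\Delta(\Latt)$. So three of the five claimed statements reduce to a direct citation.

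The two statements about $\Latt^{\ast}$ require more care, because the dual of a rank-uniform supersolvable lattice need not itself be rank-uniform or supersolvable (so Theorem~\ref{thm:main4-intro} cannot be applied to $\Latt^{\ast}$). Instead, I would route the argument through the Duality Lemma (Lemma~\ref{lemma:duality}). For the augmented Chow polynomial of the dual, the identity $\H_{\Latt^{\ast}}(x) = \H_{\Latt}(x)$ reduces the problem back to the case of $\Latt$ that has already been handled. For the Chow polynomial of the dual, the identity $\uH_{\Latt^{\ast}}(x) = \H_{\trunc(\Latt)}(x)$ replaces the problem with one about the truncation.

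To close that last case, I would invoke Proposition~\ref{prop:truncation}, which says that UMEL-shellability is preserved under truncation. Hence $\trunc(\Latt)$ is UMEL-shellable, and applying Theorem~\ref{thm:main1-intro}(ii) to $\trunc(\Latt)$ yields the real-rootedness of $\H_{\trunc(\Latt)}(x) = \uH_{\Latt^{\ast}}(x)$, concluding the argument.

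There is no genuine obstacle in this corollary; it is a packaging of results already developed in the paper. The only subtlety worth noting is that one must not try to apply the main theorem to $\Latt^{\ast}$ directly, but rather trade duality for a truncation via Lemma~\ref{lemma:duality}, which is precisely why it is useful that truncation preserves the class of UMEL-shellable posets.
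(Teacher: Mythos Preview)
Your proposal is correct and takes essentially the same approach as the paper. The paper states this corollary without an explicit proof, but the intended argument is precisely the one you outline: Theorem~\ref{thm:main4-intro} gives UMEL-shellability of $\Latt$, Theorems~\ref{thm:main1-intro} and~\ref{thm:main2-intro} handle $\Latt$ directly, and the statements for $\Latt^{\ast}$ are obtained via the Duality Lemma together with Proposition~\ref{prop:truncation} (this packaging appears verbatim as the corollary following Proposition~\ref{prop:truncation}).
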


\begin{corollary}
For every integer $n \geqslant 2$ and every finite group $\G$, the $h$-polynomial of the order complex of $\QnG$, the Chow polynomial of $\QnG$ and the augmented Chow polynomial of $\QnG$ are real-rooted. 
\end{corollary}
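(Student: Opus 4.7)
The plan is to verify that $\QnG$ is a rank-uniform supersolvable lattice and then invoke the preceding corollary.

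For supersolvability, I would exhibit the explicit maximal chain $\zero = m_0 \lessdot m_1 \lessdot \cdots \lessdot m_n = \un$, where $m_i$ is the $\G$-partition whose zero block is $\{0, 1, \ldots, i\}$ and whose remaining elements $i+1, \ldots, n$ each sit in their own singleton non-zero block. This is precisely the chain underlying Simion's EL-labeling recalled in \Cref{ex:dowling-geo}. Modularity of each $m_i$ reduces to verifying the identity $u \vee (m_i \wedge t) = (u \vee m_i) \wedge t$ for every $u \leqslant t$, which can be handled by a routine case analysis on how the indices $\{0, 1, \ldots, i\}$ and $\{i+1, \ldots, n\}$ are distributed among the blocks of $t$, combined with the explicit block-wise description of joins and meets of $\G$-partitions.

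For rank-uniformity, the key structural observation is that if $\overline{(\mu, \varphi)} \in \QnG$ has $k$ non-zero blocks, then the upper interval $[\overline{(\mu, \varphi)}, \un]$ is canonically isomorphic to the full Dowling geometry $\mathrm{Q}_k(\G)$, obtained by contracting each non-zero block of $\overline{(\mu, \varphi)}$ to a single new point and treating its existing zero block as the base point. Indeed, the covers of $\overline{(\mu, \varphi)}$ either merge two non-zero blocks (with a $\G$-twist) or absorb a non-zero block into the zero block, which mirrors the cover structure at the bottom of $\mathrm{Q}_k(\G)$; the cardinality of the existing zero block plays no role in further merges. Since $k = n - \rk \overline{(\mu, \varphi)}$, all upper intervals above elements of the same rank are isomorphic, so they share the same Whitney numbers of the second kind, which is exactly rank-uniformity.

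With both properties in hand, $\QnG$ is a rank-uniform supersolvable lattice, and the preceding corollary yields real-rootedness of the $h$-polynomial of the order complex, the Chow polynomial, and the augmented Chow polynomial simultaneously. The main subtlety is the modularity verification for each $m_i$: one must carefully keep track of the $\G$-labeling $\varphi$ and the equivalence relation on labelings when computing joins and meets. However, because $m_i$ differs from $\zero$ only by absorbing a fixed set of singleton non-zero blocks into the zero block (which carries a trivial $\G$-structure), the $\G$-bookkeeping is trivialized and the check becomes purely set-theoretic, presenting no serious obstacle.
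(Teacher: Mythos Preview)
Your proposal is correct and follows essentially the same route as the paper: verify that $\QnG$ is supersolvable and rank-uniform, then apply the preceding corollary on rank-uniform supersolvable lattices. The only difference is cosmetic---the paper cites Dowling's original results \cite[Theorems~2(a) and~4]{dowling1973class} for both facts, whereas you sketch direct proofs (the explicit modular chain underlying Simion's labeling, and the upper-interval isomorphism $[\overline{(\mu,\varphi)},\un]\cong \mathrm{Q}_{k}(\G)$).
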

For the $h$-polynomial the statement already appeared in \cite[Section 4]{branden-saud1} (see also \cite{athanasiadis-kalampogia} for the two special cases $G=\{0\},\Z/2\Z$). For the Chow polynomial the result appeared concurrently and independently in \cite[Section 6]{branden-vecchi2} using different methods. 
\begin{proof}
It was mentioned in Example~\ref{ex:dowling-geo} and proved by \cite[Theorem 4]{dowling1973class} that Dowling geometries are semimodular and supersolvable. By \cite[Theorem 2(a)]{dowling1973class}, if $F\in \QnG$ has corank $r$ then we have an isomorphism of posets $[F, \un] \simeq \Qd_r(\G).$ This shows that $\QnG$ is rank-uniform and we can conclude by Theorem~\ref{thm:main4-intro}, Theorem~\ref{thm:main2-intro}, and Theorem~\ref{thm:gamma-RR-mono-rank-up-uniform}. 
\end{proof}

\begin{example}[Chow polynomials for braid matroids of type \(B_n\)]
As explained in \Cref{ex:dowling-geo}, when $\G = \Z_2$, the Chow polynomial $\uH^{\textrm{B}}_{n}(x)$ is equal to the Chow polynomial $\uH_{\QnG}(x)$. We list the first few explicit $\uH^{\textrm{B}}_{n}(x)$, which only have real and nonpositive roots by our corollary above. 

\[
\uH_{n}^{\textrm{B}}(x) =
\begin{cases}
x^2 + 14x + 1, & n = 3,\\[4pt]
x^3 + 99x^2 + 99x + 1, & n = 4,\\[4pt]
x^4 + 622x^3 + 3162x^2 + 622x + 1, & n = 5,\\[4pt]
x^5 + 4051x^4 + 65812x^3 + 65812x^2 + 4051x + 1, & n = 6,\\[4pt]
x^6 + 28590x^5 + 1205199x^4 + 3724100x^3 + 1205199x^2 + 28590x + 1, & n = 7.\\[4pt]
\end{cases}
\]
\end{example}

\section{Rank-selection of rank-uniform labeled posets}\label{sec:rank-selection}
Let $\P$ be a finite graded bounded poset of rank $n$. For any subset $S \subseteq [n-1]$ one can consider the rank-selected subposet $\P_S$ of $\P$ at $S$, which is
\begin{equation*}
    \P_S \coloneqq \{ s \in \P \, |\, \rk(s) \in S\sqcup \{0, n \}\}.
\end{equation*}
In particular, when $S = [k]$ for some $k \in [n-1]$, the poset $\P_{S}$ is called the \emph{$(n-k-1)$-th truncation of $\P$} and will be denoted $\Tr^{n-k-1}(\P)$, and when $S = [n-2]$, the poset $\P_S = \Tr^{1}(\P)$ is called the \emph{truncation of $\P$}, often denoted as $\Tr(\P)$. In this section we explore the interactions between the notion of labeled rank-uniform poset introduced in this article, and rank-selection. Our first result is the following.
\begin{proposition}\label{prop:truncation}
Let $(\P, \lambda)$ be a rank-uniform labeled poset of rank $n$. For all $1 \leqslant k \leqslant n-1$, the truncation $\Tr^k(\P)$ admits an EL-labeling $\lambda^k$ such that $(\Tr^k(\P), \lambda^k)$ is a rank-uniform labeled poset. Furthermore, if $(\P, \lambda)$ is monotonic, then so is $(\Tr^k(\P), \lambda^k).$
\end{proposition}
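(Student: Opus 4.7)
The plan is to handle a single truncation step $\Tr(\P) \coloneqq \Tr^1(\P)$, which removes the coatoms of $\P$ and promotes $\un$ to rank $n-1$, and then iterate. For each element $s$ of rank $n-2$ in $\P$, let $\mu_1^s \in \Lambda$ denote the smallest label appearing on a cover of $s$ in $\P$; equivalently, $\mu_1^s$ is the first label of the unique weakly increasing maximal chain $C_s$ from $s$ to $\un$ in $\P$. I would define the labeling $\lambda'$ on $\Tr(\P)$ by $\lambda'(s,t) \coloneqq \lambda(s,t)$ on every covering relation $s \lessdot t$ with $\rk(t) \leqslant n-2$, and by $\lambda'(s, \un) \coloneqq \mu_1^s$ on each new covering relation $s \lessdot \un$ with $\rk(s) = n-2$. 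All labels live in the same totally ordered set $(\Lambda, \vartriangleleft)$.

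The first step is verifying that $\lambda'$ is an EL-labeling. For an interval $[s',t']$ in $\Tr(\P)$ with $t' \neq \un$, the restriction of $\lambda'$ coincides with $\lambda$, so the EL property is inherited from $\P$. For $[s',\un]$ with $\rk(s') < n-2$, a maximal chain $s' \lessdot u_1 \lessdot \cdots \lessdot s \lessdot \un$ in $\Tr(\P)$ is weakly increasing under $\lambda'$ if and only if its concatenation with $C_s$ is a weakly increasing chain in $\P$ from $s'$ to $\un$, since $\lambda'(s, \un) = \mu_1^s$ is precisely the first label of $C_s$. The EL property of $\lambda$ on $[s',\un]$ in $\P$ therefore yields both existence and uniqueness of the weakly increasing chain in $\Tr(\P)$, and lexicographic minimality transfers because the lex-comparison of two chains in $\Tr(\P)$ matches the comparison of their $\P$-extensions on the first $n-1-\rk(s')$ positions.

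I would then verify rank-uniformity of $(\Tr(\P), \lambda')$. At ranks $j < n-2$, the covers and labels above any element of $\Tr(\P)$ are unchanged from $\P$, so widths depend only on $j$ and the label index. At rank $n-2$ every element has exactly one cover, with a single label, so widths are trivially uniform. Descent counts at ranks $j < n-3$ are also unchanged. At rank $n-3$, for a cover $s' \lessdot t$ with $\ind(s',t) = i$, the only label above $t$ in $\Tr(\P)$ is $\mu_1^t$, the smallest label above $t$ in $\P$; hence $\des^{\Tr(\P)}_{s'}(t) = 1$ if $\des^{\P}_{s'}(t) > 0$ and $0$ otherwise. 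By the rank-uniformity of $\P$ this depends only on $j$ and $i$, giving the identity $\des^{\Tr(\P)}_{n-3}(i) = [\des^{\P}_{n-3}(i) > 0]$. Rank $n-2$ carries no descents.

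Monotonicity then follows at once: the indicator function $i \mapsto [\des^{\P}_{n-3}(i) > 0]$ is weakly increasing as soon as $\des^{\P}_{n-3}(\cdot)$ is, and at lower ranks the descent counts match those of $\P$. The general case $\Tr^k(\P)$ is obtained by iterating this construction $k$ times. The main technical obstacle is the EL verification, for which the crucial observation is that the choice $\lambda'(s, \un) = \mu_1^s$ synchronizes weakly increasing maximal chains of $\Tr(\P)$ with those of $\P$ via concatenation with $C_s$, so that the uniqueness and lex-minimality of the increasing chain in $\P$ descend to $\Tr(\P)$.
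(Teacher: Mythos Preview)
Your proof is correct and follows essentially the same approach as the paper: reduce to a single truncation step, define the new top label at each rank $n-2$ element $s$ to be the first label of the unique increasing chain from $s$ to $\un$, and verify uniform widths and the formula $\des^{\Tr(\P)}_{n-3}(i) = [\des^{\P}_{n-3}(i) > 0]$ for the descent counts, from which monotonicity is immediate. The only difference is cosmetic---the paper describes the new label as $\lambda(s,s')$ for the unique $s'$ with $\lambda(s,s') \trianglelefteqslant \lambda(s',\un)$, which is exactly your $\mu_1^s$---and you supply the EL verification via concatenation with $C_s$ that the paper leaves as ``one can check.''
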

\begin{proof}
By induction it is enough to prove the result for $k = 1$. In that case $\Tr^1(\P)$ is simply the poset $\P$ with all the elements of rank $n-1$ removed. We set $\lambda^1(s,t) = \lambda(s,t)$ for all $s\lessdot t \in P$ with $\rk(t) \leqslant n-2.$ If $s\in \P$ has rank $n-2$, then there exists a unique element $s'\in \P$ of rank $n-1$ such that we have $\lambda(s,s') \trianglelefteqslant \lambda(s', \un).$ We then set $\lambda^1(s, \un) = \lambda(s,s').$ One can check that $\lambda^1$ is an EL-labeling of $\Tr^1(\P)$. Let us check that $(\Tr^1(\P), \lambda^1)$ is rank-uniform. For the widths we only have to check uniformity at rank $n-2,$ which is immediate since in $\Tr^1(\P)$ there is a unique covering relation above an element of rank $n-2$. For the descent numbers we only need to check uniformity at rank $n-3,n-2.$ Let us denote by $\des^1$ the descent function in $\Tr^1(\P).$ For all $s\lessdot t \in \P$ with $s$ of rank $n-3$ we have $\des^1(s,t) = 1$ when $\des_{n-3}(\Ind(s,t)) > 0$ and $\des^1(s,t) = 0$ otherwise, which proves uniformity. This computation also shows that if $(\P, \lambda)$ is monotonic, then $\des_{n-3}(\medbullet)$ is weakly-increasing, and so $\des^1_{n-3}(\bullet)$ is too, which implies that $(\Tr^1(\P), \lambda^1)$ is monotonic as well. 
\end{proof}
This has the following consequence. 
\begin{corollary}
    Let $\P$ be a UMEL-shellable poset. The $h$-polynomial, the Chow polynomial and the augmented Chow polynomial of $\Pdual$ are real-rooted.
\end{corollary}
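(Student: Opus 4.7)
The plan is to deduce each of the three real-rootedness statements for $\Pdual$ by combining the duality identities of Lemma~\ref{lemma:duality} with the real-rootedness already established for UMEL-shellable posets themselves, and then using Proposition~\ref{prop:truncation} to absorb the truncation that appears on one of the dual sides. The three statements split into three essentially independent short arguments.

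First, for the augmented Chow polynomial, I would simply invoke the identity $\H_{\Pdual}(x) = \H_{\P}(x)$ from the Duality Lemma and quote Theorem~\ref{thm:main1-intro}(ii) applied to $\P$ itself. This step is immediate and requires nothing new.

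Second, for the Chow polynomial of $\Pdual$, the Duality Lemma gives $\uH_{\Pdual}(x) = \H_{\Tr(\P)}(x)$, so I need $\H_{\Tr(\P)}(x)$ to be real-rooted. The key input is Proposition~\ref{prop:truncation}, which asserts that $\Tr(\P)$ inherits a UMEL-shellable structure from $\P$ via the labeling $\lambda^1$ constructed there. Applying Theorem~\ref{thm:main1-intro}(ii) to $\Tr(\P)$ then delivers what is needed.

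Third, for the $h$-polynomial of $\Delta(\Pdual)$, I would observe that a subset of $\Pdual$ is a chain precisely when it is a chain in $\P$, so $\Delta(\Pdual) = \Delta(\P)$ as abstract simplicial complexes. Consequently their $h$-polynomials coincide, and real-rootedness follows from Theorem~\ref{thm:main2-intro} applied to $\P$.

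None of these steps presents a substantive obstacle, since the heavy lifting has already been carried out in Theorems~\ref{thm:main1-intro}~and~\ref{thm:main2-intro}, Proposition~\ref{prop:truncation}, and the Duality Lemma. The only two sanity checks I would actually write out are that $\Tr(\P)$ in the Duality Lemma means the same truncation as in Proposition~\ref{prop:truncation} (both remove the corank-$1$ level), and the trivial identification $\Delta(\Pdual) = \Delta(\P)$ at the level of abstract simplicial complexes. Everything else is a citation.
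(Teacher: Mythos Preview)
Your proposal is correct and follows essentially the same approach as the paper's own proof: both use $\Delta(\Pdual)=\Delta(\P)$ for the $h$-polynomial, the Duality Lemma together with Proposition~\ref{prop:truncation} for the Chow polynomial, and the Duality Lemma alone for the augmented Chow polynomial. The only cosmetic difference is that the paper cites Theorem~\ref{thm:gamma-RR-mono-rank-up-uniform} (the $\gamma$-polynomial formulation) rather than Theorem~\ref{thm:main1-intro}, but these are equivalent.
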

\begin{proof}
For the $h$-polynomial it is immediate from the definitions that $h_{\P}= h_{\Pdual}$ and so the result follows from Theorem~\ref{thm:main2-intro}. For the Chow polynomial, by Lemma~\ref{lemma:duality} the result follows from Proposition~\ref{prop:truncation} and Theorem~\ref{thm:gamma-RR-mono-rank-up-uniform}. For the augmented Chow polynomial it suffices to apply Lemma~\ref{lemma:duality} and Theorem~\ref{thm:gamma-RR-mono-rank-up-uniform}. 
\end{proof}

We were not able to prove or disprove that monotonic rank-uniform labeled posets are closed under general rank-selection, so we leave it as a question. 

\begin{question}\label{question:rank-selected}
Let $(\P, \lambda)$ be a monotonic rank-uniform labeled poset of rank $n$, and let $S$ be a subset of $[n-1]$. Does the rank selected subposet $\P_S$ admit an EL-labeling $\lambda^S$ such that $(\P_S, \lambda^S)$ is a monotonic rank-uniform labeled poset? 
\end{question}

By  \cite[Theorem 4.1.1]{Li2020LexicographicShellablePosets}, if a ranked poset $\P$ has an EL-labeling $\lambda$, then every rank-selected subposet $\P_S$ admits an explicit EL-labeling $\lambda^S$. One can show that if $(\P, \lambda)$ is rank-uniform then so is $(\P_S, \lambda^S)$. However it is not true that the monotonicity of $(\P, \lambda)$ implies that of $(\P_S, \lambda^S)$, as can be seen already for boolean lattices for instance. Nevertheless, it is still possible that one might be able to find another EL-labeling of $\P_S$ which preserves monotonicity. 

Despite this lack of result, we still have the following theorem, which generalizes Theorem~\ref{thm:gamma-RR-mono-rank-up-uniform}.

\newtheorem*{thm:intro3}{Theorem~\ref{thm:main3-intro}}
\begin{thm:intro3}
    {\itshape 
    Let $\P$ be UMEL-shellable poset and let $\P_S$ be a rank-selected subposet of $\P$. Then the following holds:
    \begin{enumerate}[\normalfont (i)]
        \item The Chow polynomials $\uH_{\P_S}(x)$ and $\uH_{\P_S^*}(x)$ have nonpositive real roots.
        \item The augmented Chow polynomial $\H_{\P_S}(x)$ has nonpositive real roots.
        \item The roots of the polynomials $\uH_{\P_S}(x)$ and $\H_{\P_S}(x)$ interlace.
        \item The $h$-polynomial of the order complex $\Delta(\P_S)$ has nonpositive real roots.
    \end{enumerate}
    }
\end{thm:intro3}
The proof consists of a modification of the proof of \cite[Theorem 3.1]{athanasiadis-douvropoulos-kalampogia}, adding the same ideas that we have used for the proofs of Theorems~\ref{thm:gamma-RR-mono-rank-up-uniform}~and~\ref{thm:main2-intro}. 

\begin{proof}
We start with the statement on the $h$-polynomial. Let us denote by $h^S(y)$ the $h$-polynomial of the order complex of $\P_S$. For any $s\in \P$ and any integer $i \geqslant1$ we denote 
\begin{align*}
    \calC^{S,s}_i \coloneqq \{ C \colon s = s_0 \lessdot s_1 \lessdot \cdots \lessdot s_{n - \rk(s)} = \un \,  | \, \Des(C) \subseteq S, \Ind(s_0, s_1) = i \}
\end{align*}
and for every $s \in \P$ and for all $S \subseteq [n - \rk(s) - 1]$, we denote the sequence of polynomials 
\[
\mathbf{h}^{S,s}(y) \coloneqq \left( \sum_{C \in \calC^{S,s}_i}y^{\des(C)}\right)_{i \geqslant 1}. 
\]
Since
\[
h^{S}(y) = \sum_{T \subseteq S} \beta_{\P}(T)y^{|T|} = \sum_{i \geqslant1} \sum_{C \in \calC^{S, \zero}_i}y^{\des(C)} = \sum_{i \geq 1}h^{S,s}_i(y),
\]
it is sufficient to prove that the sequence of polynomials $\mathbf{h}^{S, \zero}$ is an interlacing sequence, which we prove by induction on the rank and the set $S$. By uniformity of $(\P, \lambda)$ the polynomials $h_{i}^{S,s}(y)$ only depend on the corank $k$ of $s$, and so we write $h_{i}^{S,k}(y)$ instead. For any $s$ of corank $k$ and any $S \subseteq [k-1]$, if $1 \in S$ then by partitioning the elements of $\calC_{s, S}(i)$ according to their second label we get the recursive formula 
\begin{equation*}
    \mathbf{h}^{S, k}(y) = \mathbf{A} (y) \cdot \mathbf{h}^{(S\setminus 1)-1,k-1}(y)
\end{equation*}
where $\mathbf{A}(y) = \begin{pmatrix} a_{ij}(y)\end{pmatrix}$ is the matrix of polynomials defined by 
\[
a_{ij}(y) = \begin{cases}
\omega_{n-k}(i) \: y & \text{ if } j \leqslant \des_{n-k}(i), \\
\omega_{n-k}(i) & \text{ if } j > \des_{n-k}(i).
\end{cases}
\] By Lemma~\ref{lem:two-column-monotonicity-interlacing} the matrix $\mathbf{A}$ satisfies the hypotheses of Theorem~\ref{thm:interlacing} and so that theorem allows us to conclude by induction. On the other hand if $1 \notin S$, we have the recursion formula 
\begin{equation*}
    \mathbf{h}^{S,k}(y) = \mathbf{A} \cdot \mathbf{h}^{S-1, k-1}(y)
\end{equation*}
where $\mathbf{A} = (a_{ij})$ is the matrix of real numbers defined by $a_{ij}(y) = 0$ if $j \leqslant\des_{n-k}(i)$ and $a_{ij}(y) =\omega_{n-k}(i)$ otherwise. One can check that the matrix $\mathbf{A}$ satisfies the hypotheses of Theorem~\ref{thm:interlacing} and so that theorem allows us to conclude by induction.

We prove together the statements on the augmented and non-augmented Chow polynomials of $\P_{S}$. By Lemma~\ref{lem:gamma-transfo-RR} it is enough to prove that $\gamma(\uH_{\P_{S}};y)$ and $\gamma(\H_{\P_S}; y)$ are real-rooted. For all $T \subseteq S$ we say that $T$ is $S$-stable if $T$ does not contain two elements which are consecutive in $S.$ \footnote{Two elements $i, j$ with $i < j$ are consecutive in an ordered set $S$ if no $k \in S$ satisfies $i < k < j$.} For all $s \in \P$ and all $i\geqslant1$ we introduce the following notations. 
{\small
\begin{align*}
    \calC^{\ndd, S ,s} & \coloneqq \{ C = \{s_0 = s \lessdot \cdots \lessdot s_{n - \rk s} = \un\} \, | \, \Des(C) \subseteq S, \, \Des(C)  \,S\textrm{-stable}\}, \\
    \calC^{\ndd, S ,s}_{i} & \coloneqq \{ C = \{s_0 = s \lessdot \cdots \lessdot s_{n - \rk s} = \un\} \, | \, C \in \calC^{\ndd, S ,s}, \Ind(s_0, s_1) = i\}, \\
    \calC^{\ndd, S ,s}_{i, \asce} & \coloneqq \{ C = \{s_0 = s \lessdot \cdots \lessdot s_{n - \rk s} = \un \}\, | \, C \in \calC^{\ndd, S ,s}, \Ind(s_0, s_1) = i, \min S \notin \Des(C) \}, \\
    \calC^{\ndd, S ,s}_{i, \desc} & \coloneqq \{ C = \{s_0 = s \lessdot \cdots \lessdot s_{n - \rk s} = \un \}\, | \, C \in \calC^{\ndd, S ,s}, \Ind(s_0, s_1) = i, \min S \in \Des(C) \}.
\end{align*}
}
By Theorem~\ref{thm:gamma-flag-h-vector} we have the equalities of polynomials
\begin{align*}
    \gamma(\uH_{\P_S}; y) & = \sum_{\substack{S\textrm{-stable } T \\ \min(S) \notin T}} \beta(T)y^{|T|} = \sum_{C \in \calC^{\ndd, S ,s}_{i, \asce}} y^{\des(C)},
\end{align*} 
and 
\begin{align*}
    \gamma(\H_{\P_S}; y) &= \sum_{S \text{-stable } T} \beta(T)y^{|T|} = \sum_{C \in \calC^{\ndd, S ,s}_{i}} y^{\des(C)}. 
\end{align*} 
Let us prove by induction that for all $s \in \P$ and for all $S \subseteq [n - \rk(s) -1]$, the sequences of polynomials 
\[
\gamma^{S, s}_{i,\asce}(y) \coloneqq \left(\sum_{C \in \calC^{\ndd, S ,s}_{i, \asce}} y^{\des(C)} \right)_{i \geqslant 1}
\quad \text{ and } \quad 
\gamma^{S, s}_i(y) \coloneqq \left(\sum_{C \in \calC^{\ndd, S ,s}_{i}} y^{\des(C)} \right)_{i \geqslant 1}
\] are interlacing. By uniformity of $\P$ the polynomials $\gamma^{S, s}_{i,\asce}$ and $\gamma^{S, s}_{i}$ only depend on the corank $k$ of $s$ and so we denote them by $\gamma^{S, k}_{i,\asce}$ and $\gamma^{S, k}_{i}$ respectively. We will prove both those interlacing statements by proving that the following diagram 
\begin{center}
\begin{equation}
\raisebox{-2.3cm}{
\begin{tikzpicture}[scale=2,every path/.style={line width=0.8pt}]
    \node[] (A) at (0,0) {$\gamma^{S,k}_{1, \asce}$};
    \node[] (B) at (1,0) {$\gamma^{S,k}_{2, \asce}$};
    \node[] (C) at (2,0) {$\cdots$};
    \node[] (D) at (3,0) {$\gamma^{S,k}_{\ell-1, \asce}$};
    \node[] (E) at (4,0) {$\gamma^{S,k}_{\ell, \asce}$};
    \node[] (F) at (0,-1) {$\gamma^{S,k}_{1}$};
    \node[] (G) at (1,-1) {$\gamma^{S,k}_{2}$};
    \node[] (H) at (2,-1) {$\cdots$};
    \node[] (I) at (3,-1) {$\gamma^{S,k}_{\ell-1}$};
    \node[] (J) at (4,-1) {$\gamma^{S,k}_{\ell}$};
    \node[] (K) at (0,-2) {$\gamma^{S,k}_{1, \desc}$};
    \node[] (L) at (1,-2) {$\gamma^{S,k}_{2, \desc}$};
    \node[] (M) at (2,-2) {$\cdots$};
    \node[] (N) at (3,-2) {$\gamma^{S,k}_{\ell-1, \desc}$};
    \node[] (O) at (4,-2) {$\gamma^{S,k}_{\ell, \desc}$};
    \node[] () at (2, -0.5) {$\cdots$};
    \node[] () at (2, -1.5) {$\cdots$};
    \draw[->] (A) -- (B);
    \draw[->] (B) -- (C);
    \draw[->] (D) -- (E);
    \draw[->] (C) -- (D);
    \draw[->] (F) -- (G);
    \draw[->] (G) -- (H);
    \draw[->] (H) -- (I);
    \draw[->] (I) -- (J);
    \draw[->] (A) -- (F);
    \draw[->] (B) -- (G);
    \draw[->] (D) -- (I);
    \draw[->] (E) -- (J);
    \draw[->] (K) -- (L);
    \draw[->] (L) -- (M);
    \draw[->] (M) -- (N);
    \draw[->] (N) -- (O);
    \draw[->] (F) -- (K);
    \draw[->] (G) -- (L);
    \draw[->] (I) -- (N);
    \draw[->] (J) -- (O);
    \draw[->,smooth, tension=0.6] plot coordinates {
    (3.8, -0.1)   
    (3.0, -0.3)   
    (2.4, -0.75)  
    (1.7, -0.9)   
    (1.2, -1.4)   
    (0.6, -1.5)   
    (0.2, -1.9)   
    };
\end{tikzpicture} 
}
\tag{D}
\label{eq:inductive-step-rkselec}
\end{equation}
\end{center}
is interlacing, where $\ell$ denotes the number of distinct labels above any element of corank $k$. The base cases are routine checks and similar to the base cases of \Cref{thm:gamma-RR-mono-rank-up-uniform}. For the induction step, if $1 \in \min S$, the result follows by analogous arguments as in the proof of Lemma~\ref{lem:induction}. If $1 \notin \min S$, then to show that the diagram \eqref{eq:inductive-step-rkselec} has  interlacing families along every directed path, we prove the following statements.
\begin{enumerate}[(i)]
    \item \label{item:top-row-interlacing} The top row of \eqref{eq:inductive-step-rkselec} is interlacing. 
    \item \label{item:mid-row-interlacing} The middle row of \eqref{eq:inductive-step-rkselec} is interlacing.
    \item \label{item:bot-row-interlacing} The bottom row of \eqref{eq:inductive-step-rkselec} is interlacing.
    \item \label{item:top-to-mid} For all $1\leqslant i \leqslant j \leqslant \ell$ we have $\gamma^{S,k}_{i, \asce} \preceq \gamma^{S, k}_{j}.$
    \item \label{item:top-to-bot} For all $1 \leqslant i , j \leqslant \ell$ we have $\gamma^{S,k}_{i, \asce} \preceq \gamma^{S, k}_{j, \desc}.$
    \item \label{item:mid-to-bot} For all $1 \leqslant i \leqslant j \leqslant \ell$ we have $\gamma^{S,k}_{i} \preceq \gamma^{S, k}_{j, \desc}.$
\end{enumerate}

For \ref{item:top-row-interlacing}, \ref{item:mid-row-interlacing}, \ref{item:bot-row-interlacing} we have the inductive formulas 
\[
\pmb{\bm{\gamma}}^{S, k}_{\asce} = \mathbf{A} \cdot  \pmb{\bm{\gamma}}^{S-1, k-1}_{\asce}, \quad \quad \pmb{\bm{\gamma}}^{S, k} = \mathbf{A} \cdot \pmb{\bm{\gamma}}^{S-1, k-1}, \quad \text{ and } \quad \pmb{\bm{\gamma}}^{S, k}_{\desc} = \mathbf{A} \cdot \pmb{\bm{\gamma}}^{S-1, k-1}_{\desc},
\] where the matrix $\mathbf{A} = \begin{pmatrix} a_{ij}\end{pmatrix}$ is defined by 
\[
a_{ij} = \begin{cases}
\omega_{n-k}(i) & \text{ if } j > \des_{n-k}(i) \\
0 & \text{ if } j \leqslant \des_{n-k}(i). 
\end{cases}
\]
This gives the result by Theorem~\ref{thm:interlacing}. 
\smallskip

For~\ref{item:top-to-mid}, if $1 \leqslant i \leqslant j \leqslant \ell$ we have the equality
{\small
\begin{multline*}
\begin{pmatrix}
    \gamma^{S,k}_{i, \asce} \\
    \gamma^{S,k}_{j}
\end{pmatrix}
=
\begin{pmatrix}
\omega_{n-k}(i) & \cdots & \omega_{n-k}(i) & \omega_{n-k}(i) & \cdots & \omega_{n-k}(i) & 0 & \cdots & 0  \\
0 & \cdots & 0& \omega_{n-k}(j) & \cdots & \omega_{n-k}(j) & \omega_{n-k}(j) & \cdots & \omega_{n-k}(j) 
\end{pmatrix} \\
\setlength\arraycolsep{2pt}
\begin{pmatrix}
\gamma^{S-1, k-1}_{\des_{n-k}(i)+1, \asce} &\cdots &
\gamma^{S-1, k-1}_{\des_{n-k}(j), \asce} &
\gamma^{S-1, k-1}_{\des_{n-k}(j) + 1, \asce} &
\cdots &
\gamma^{S-1, k-1}_{\ell', \asce} & 
\gamma^{S-1, k-1}_{\des_{n-k}(j)+1, \desc} &
\cdots &
\gamma^{S-1, k-1}_{\ell', \desc} 
\end{pmatrix}^{\top}.
\end{multline*}
}

By our inductive hypothesis the sequence of polynomials in the right hand side is interlacing and we can conclude by Theorem~\ref{thm:interlacing}. 
\smallskip

For \ref{item:top-to-bot}, if $1 \leqslant i, j \leqslant \ell$ we have the equality
{\smaller
\begin{multline*}
\begin{pmatrix}
    \gamma^{S,k}_{i, \asce} \\
    \gamma^{S,k}_{j, \desc}
\end{pmatrix}
=
\begin{pmatrix}
\omega_{n-k}(i) & \cdots & \omega_{n-k}(i) & 0 & \cdots & 0  \\
0 & \cdots & 0& \omega_{n-k}(j) & \cdots & \omega_{n-k}(j)  
\end{pmatrix} \\
\begin{pmatrix}
\gamma^{S-1, k-1}_{\des_{n-k}(i)+1, \asce} &\cdots &
\gamma^{S-1, k-1}_{\des_{n-k}(\ell'), \asce} &
\gamma^{S-1, k-1}_{\des_{n-k}(j)+1, \desc} &
\cdots &
\gamma^{S-1, k-1}_{\ell', \desc} 
\end{pmatrix}^{\top}.    
\end{multline*}
}
By our inductive hypothesis the vector of polynomials in the right hand side is interlacing and we can conclude by Theorem~\ref{thm:interlacing}. 
\smallskip

For \ref{item:mid-to-bot}, if $1 \leqslant i \leqslant n$ notice that $\gamma^{S, k}_{i} = \gamma^{S,k}_{i, \desc} + \gamma^{S, k}_{i, \asce}$ and so the result follows from \ref{item:bot-row-interlacing}, \ref{item:top-to-mid},  and convexity of interlacing (see Proposition~\ref{prop:interlacing-properties}). 
\end{proof}

As mentioned in the introduction, one special case of rank selection is the truncation (selecting all ranks except for the corank $1$) $\Tr(\P)$ of $\P$. Both the (augmented) Chow polynomials of $\Tr(\P)$ have tight connections with poset duality and the original poset.
\begin{thm}\label{thm:gamma-truncation-interlacing}
    Let $\P$ be a UMEL-shellable poset and let $\Tr(\P)$ be the truncation of $\P$. 
    The following interlacing diagrams of Chow polynomials and their $\gamma$-polynomials hold: 
    \[
    \begin{tikzcd}
    \uH_{\Tr(\P)} \arrow[r, "\preceq"] \arrow[d, "\preceq"] & \uH_{\P} \arrow[d, "\preceq"] \\
     \H_{\Tr(\P)} \arrow[r, "\preceq"] & \H_{\P}
    \end{tikzcd}, \quad \text{ and } \quad \begin{tikzcd}
    \gamma(\uH_{\Tr(\P)}; y) \arrow[r, "\preceq"] \arrow[d, "\preceq"] & \gamma(\uH_{\P}; y) \arrow[d, "\preceq"] \\
     \gamma(\H_{\Tr(\P)}; y) \arrow[r, "\preceq"] & \gamma(\H_{\P}; y)
    \end{tikzcd}.
    \]
    Equivalently, the following diagrams hold: 
    \[
    \begin{tikzcd}
    \uH_{\Tr(\P)} \arrow[r, "\preceq"] \arrow[d, "\preceq"] & \uH_{\P} \arrow[d, "\preceq"] \\
    \uH_{\P^{\ast}} \arrow[r, "\preceq"] & \H_{\P^{\ast}}
    \end{tikzcd}, \quad \text{ and } \quad 
    \begin{tikzcd}
    \gamma(\uH_{\Tr(\P)}; y) \arrow[r, "\preceq"] \arrow[d, "\preceq"] & \gamma(\uH_{\P};y) \arrow[d, "\preceq"] \\
    \gamma(\uH_{\P^{\ast}};y) \arrow[r, "\preceq"] & \gamma(\H_{\P^{\ast}};y)
    \end{tikzcd}
    \]
\end{thm}

\begin{proof}
    We prove the statement by setting up a cubical interlacing diagram of interlacing polynomials of all the polynomials involved. See \Cref{fig:gamma-cube}. By \Cref{lem:gamma-transfo-RR}, it is enough to prove the statement for the corresponding $\gamma$-polynomials. 
    By \Cref{thm:gamma-flag-h-vector} and the fact that, for stable $S \subseteq \{2, \ldots, n-2\}$, $\beta_{\Tr(\P)}(S) = \beta_{\P}(S)$, we have the following equalities of polynomials 
    \[
    \gamma(\uH_{\Tr(\P)}; y) = \sum_{\substack{S \subseteq \{2, \ldots, n-2\}\\ S \text{ stable}}} \beta_{\Tr(\P)}(S) \: y^{\abs{S}} = \sum_{\substack{S \subseteq \{2, \ldots, n-2\}\\ S \text{ stable}}} \beta_{\P}(S) \: y^{\abs{S}}.
    \]
    The result then follows from \Cref{lem:gamma-cube-interlacing}, the above expression of $\gamma(\uH_{\Tr(\P)})$ and the analogous expression for $\gamma(\H_{\Tr(\P)})$, and \Cref{lemma:duality}. 
\end{proof}

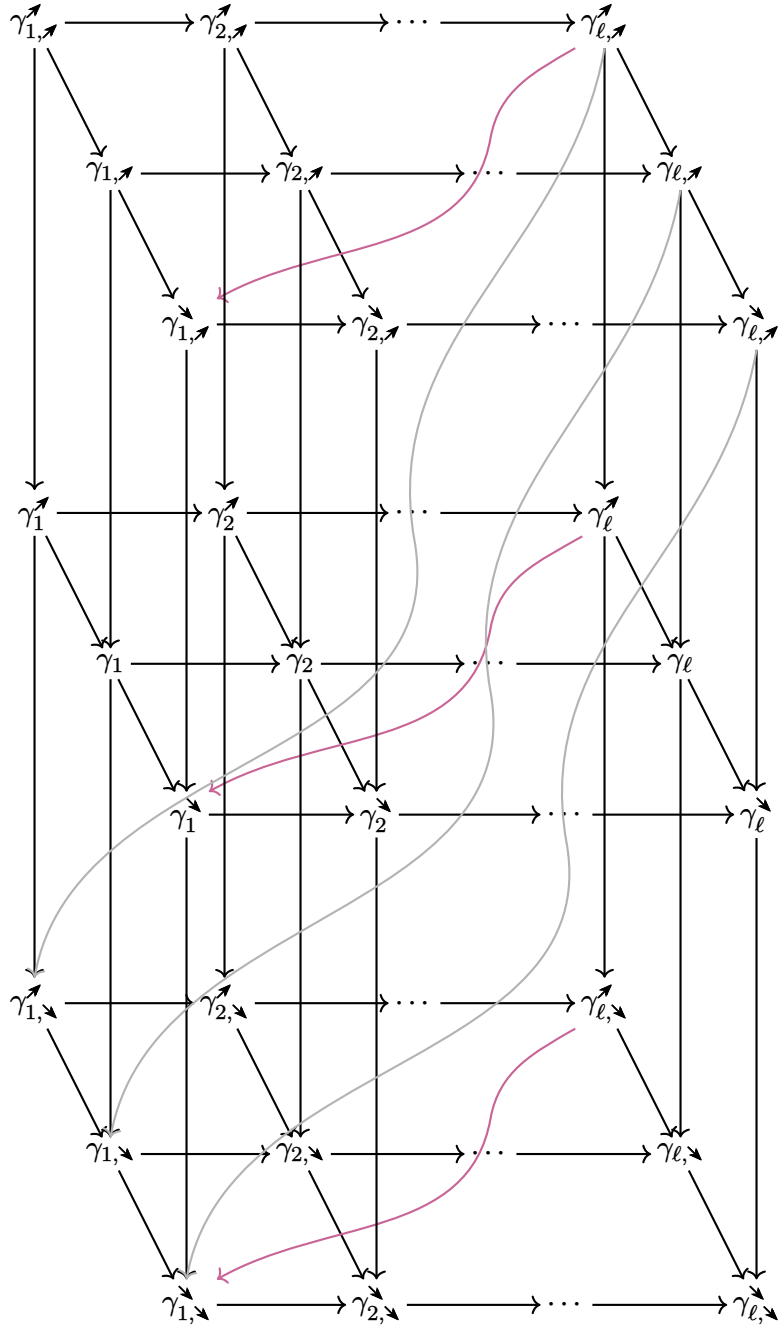
\begin{figure}[htbp]
    \centering
    \begin{tikzpicture}[
        baseline=(current bounding box.center),
        every node/.style= {inner sep=2pt},
        every path/.style={line width=0.8pt}
    ]

    \def\xsep{2.5}
    \def\ysep{2}
    \def\slant{1}
    \def\layersep{6.5}  


    \node (L1g11) at (0, 0) {$\gamma^{\asce}_{1,\asce}$};
    \node (L1g12) at (\xsep, 0) {$\gamma^{\asce}_{2,\asce}$};
    \node (L1d1) at (2*\xsep, 0) {$\cdots$};
    \node (L1g1l) at (3*\xsep, 0) {$\gamma^{\asce}_{\ell,\asce}$};

    \node (L1g21) at (\slant, -\ysep) {$\gamma_{1,\asce}$};
    \node (L1g22) at (\xsep + \slant, -\ysep) {$\gamma_{2,\asce}$};
    \node (L1d2) at (2*\xsep + \slant, -\ysep) {$\cdots$};
    \node (L1g2l) at (3*\xsep + \slant, -\ysep) {$\gamma_{\ell,\asce}$};

    \node (L1g31) at (2*\slant, -2*\ysep) {$\gamma^{\desc}_{1,\asce}$};
    \node (L1g32) at (\xsep + 2*\slant, -2*\ysep) {$\gamma^{\desc}_{2,\asce}$};
    \node (L1d3) at (2*\xsep + 2*\slant, -2*\ysep) {$\cdots$};
    \node (L1g3l) at (3*\xsep + 2*\slant, -2*\ysep) {$\gamma^{\desc}_{\ell,\asce}$};

    \draw[->] (L1g11) -- (L1g12);
    \draw[->] (L1g12) -- (L1d1);
    \draw[->] (L1d1) -- (L1g1l);
    \draw[->] (L1g21) -- (L1g22);
    \draw[->] (L1g22) -- (L1d2);
    \draw[->] (L1d2) -- (L1g2l);
    \draw[->] (L1g31) -- (L1g32);
    \draw[->] (L1g32) -- (L1d3);
    \draw[->] (L1d3) -- (L1g3l);

    \draw[->] (L1g11) -- (L1g21);
    \draw[->] (L1g12) -- (L1g22);
    \draw[->] (L1g1l) -- (L1g2l);
    \draw[->] (L1g21) -- (L1g31);
    \draw[->] (L1g22) -- (L1g32);
    \draw[->] (L1g2l) -- (L1g3l);

    \draw[->, curvyarrow, thick] (L1g1l.south west) to[out=210, in=80] 
        ($(L1d2.north)+(0,0.3)$) to[out=260, in=30] 
        (L1g31.north east);


    \node (L2g11) at (0, -\layersep) {$\gamma^{\asce}_{1}$};
    \node (L2g12) at (\xsep, -\layersep) {$\gamma^{\asce}_{2}$};
    \node (L2d1) at (2*\xsep, -\layersep) {$\cdots$};
    \node (L2g1l) at (3*\xsep, -\layersep) {$\gamma^{\asce}_{\ell}$};

    \node (L2g21) at (\slant, -\layersep - \ysep) {$\gamma_{1}$};
    \node (L2g22) at (\xsep + \slant, -\layersep - \ysep) {$\gamma_{2}$};
    \node (L2d2) at (2*\xsep + \slant, -\layersep - \ysep) {$\cdots$};
    \node (L2g2l) at (3*\xsep + \slant, -\layersep - \ysep) {$\gamma_{\ell}$};

    \node (L2g31) at (2*\slant, -\layersep - 2*\ysep) {$\gamma^{\desc}_{1}$};
    \node (L2g32) at (\xsep + 2*\slant, -\layersep - 2*\ysep) {$\gamma^{\desc}_{2}$};
    \node (L2d3) at (2*\xsep + 2*\slant, -\layersep - 2*\ysep) {$\cdots$};
    \node (L2g3l) at (3*\xsep + 2*\slant, -\layersep - 2*\ysep) {$\gamma^{\desc}_{\ell}$};

    \draw[->] (L2g11) -- (L2g12);
    \draw[->] (L2g12) -- (L2d1);
    \draw[->] (L2d1) -- (L2g1l);
    \draw[->] (L2g21) -- (L2g22);
    \draw[->] (L2g22) -- (L2d2);
    \draw[->] (L2d2) -- (L2g2l);
    \draw[->] (L2g31) -- (L2g32);
    \draw[->] (L2g32) -- (L2d3);
    \draw[->] (L2d3) -- (L2g3l);

    \draw[->] (L2g11) -- (L2g21);
    \draw[->] (L2g12) -- (L2g22);
    \draw[->] (L2g1l) -- (L2g2l);
    \draw[->] (L2g21) -- (L2g31);
    \draw[->] (L2g22) -- (L2g32);
    \draw[->] (L2g2l) -- (L2g3l);

    \draw[->, curvyarrow, thick] (L2g1l.south west) to[out=210, in=80] 
        ($(L2d2.north)+(0,0.3)$) to[out=260, in=30] 
        (L2g31.north east);


    \node (L3g11) at (0, -2*\layersep) {$\gamma^{\asce}_{1,\desc}$};
    \node (L3g12) at (\xsep, -2*\layersep) {$\gamma^{\asce}_{2,\desc}$};
    \node (L3d1) at (2*\xsep, -2*\layersep) {$\cdots$};
    \node (L3g1l) at (3*\xsep, -2*\layersep) {$\gamma^{\asce}_{\ell,\desc}$};

    \node (L3g21) at (\slant, -2*\layersep - \ysep) {$\gamma_{1,\desc}$};
    \node (L3g22) at (\xsep + \slant, -2*\layersep - \ysep) {$\gamma_{2,\desc}$};
    \node (L3d2) at (2*\xsep + \slant, -2*\layersep - \ysep) {$\cdots$};
    \node (L3g2l) at (3*\xsep + \slant, -2*\layersep - \ysep) {$\gamma_{\ell,\desc}$};

    \node (L3g31) at (2*\slant, -2*\layersep - 2*\ysep) {$\gamma^{\desc}_{1,\desc}$};
    \node (L3g32) at (\xsep + 2*\slant, -2*\layersep - 2*\ysep) {$\gamma^{\desc}_{2,\desc}$};
    \node (L3d3) at (2*\xsep + 2*\slant, -2*\layersep - 2*\ysep) {$\cdots$};
    \node (L3g3l) at (3*\xsep + 2*\slant, -2*\layersep - 2*\ysep) {$\gamma^{\desc}_{\ell,\desc}$};

    \draw[->] (L3g11) -- (L3g12);
    \draw[->] (L3g12) -- (L3d1);
    \draw[->] (L3d1) -- (L3g1l);
    \draw[->] (L3g21) -- (L3g22);
    \draw[->] (L3g22) -- (L3d2);
    \draw[->] (L3d2) -- (L3g2l);
    \draw[->] (L3g31) -- (L3g32);
    \draw[->] (L3g32) -- (L3d3);
    \draw[->] (L3d3) -- (L3g3l);

    \draw[->] (L3g11) -- (L3g21);
    \draw[->] (L3g12) -- (L3g22);
    \draw[->] (L3g1l) -- (L3g2l);
    \draw[->] (L3g21) -- (L3g31);
    \draw[->] (L3g22) -- (L3g32);
    \draw[->] (L3g2l) -- (L3g3l);

    \draw[->, curvyarrow, thick] (L3g1l.south west) to[out=210, in=80] 
        ($(L3d2.north)+(0,0.3)$) to[out=260, in=30] 
        (L3g31.north east);


    \draw[->] (L1g11) -- (L2g11);
    \draw[->] (L1g12) -- (L2g12);
    \draw[->] (L1g1l) -- (L2g1l);
    \draw[->] (L1g21) -- (L2g21);
    \draw[->] (L1g22) -- (L2g22);
    \draw[->] (L1g2l) -- (L2g2l);
    \draw[->] (L1g31) -- (L2g31);
    \draw[->] (L1g32) -- (L2g32);
    \draw[->] (L1g3l) -- (L2g3l);

    \draw[->] (L2g11) -- (L3g11);
    \draw[->] (L2g12) -- (L3g12);
    \draw[->] (L2g1l) -- (L3g1l);
    \draw[->] (L2g21) -- (L3g21);
    \draw[->] (L2g22) -- (L3g22);
    \draw[->] (L2g2l) -- (L3g2l);
    \draw[->] (L2g31) -- (L3g31);
    \draw[->] (L2g32) -- (L3g32);
    \draw[->] (L2g3l) -- (L3g3l);


    \draw[->, black!30] (L1g1l.south) to[out=260, in=100] 
        ($(L2d1.south)+(0,-0.2)$) to[out=280, in=80] 
        (L3g11.north);

    \draw[->, black!30] (L1g2l.south) to[out=260, in=100] 
        ($(L2d2.south)+(0,-0.2)$) to[out=280, in=80] 
        (L3g21.north);

    \draw[->, black!30] (L1g3l.south) to[out=260, in=100] 
        ($(L2d3.south)+(0,-0.2)$) to[out=280, in=80] 
        (L3g31.north);

    \end{tikzpicture}
    \caption{Interlacing diagram of refinement polynomials of $\gamma$-polynomials of the truncation, dual, and the original poset.}
    \label{fig:gamma-cube}
\end{figure}

\begin{lemma}\label[lemma]{lem:gamma-cube-interlacing}
    Using the same notations as in \Cref{thm:gamma-truncation-interlacing}, the polynomials in \Cref{fig:gamma-cube} form interlacing sequences along directed paths. 
\end{lemma}

\begin{proof}
    Along each directed path in the cube, polynomials interlace as they flow
    \begin{enumerate}[label=(\Alph*)]
        \item \label{item:cubical-horizontal} (horizontally) rightward, 
        \item \label{item:cubical-vertical} (vertically) downward, and 
        \item \label{item:cubical-depth} (depth) outward towards the reader. 
    \end{enumerate}
    
    For \ref{item:cubical-horizontal}, we have the following $9$ cases, $3$ cases for each horizontal square diagram: 
    \begin{enumerate}[(1)]
        \item \label{item:cubical-horizontal-top} All $3$ horizontal paths in the top square form interlacing sequences.
        \item \label{item:cubical-horizontal-mid} All $3$ horizontal paths in the middle square form interlacing sequences. 
        \item \label{item:cubical-horizontal-bot} All $3$ horizontal paths in the bottom square form interlacing sequences. 
    \end{enumerate}
    For \ref{item:cubical-horizontal} \ref{item:cubical-horizontal-top}, we have the following recursive formula
    \[
        \pmb{\bm{\gamma}}_{\asce}^{\asce} = \mathbf{A}_{\asce}^{\asce} \cdot \pmb{\bm{\gamma}}^{\asce}, \quad
        \pmb{\bm{\gamma}}_{\asce} = \mathbf{A}_{\asce} \cdot \pmb{\bm{\gamma}}^{\asce}, \quad 
        \pmb{\bm{\gamma}}_{\asce}^{\desc} = \mathbf{A}_{\asce}^{\desc} \cdot \pmb{\bm{\gamma}}^{\desc}, \\
    \]
    where all the transition matrices are equal
    \[
        \mathbf{A}_{\asce}^{\asce} = \mathbf{A}_{\asce} = \mathbf{A}_{\asce}^{\desc}  = \begin{pmatrix} a_{ij} \end{pmatrix}, \text{ with } a_{ij} = 
        \begin{cases}
            0 & \text{ if } j \leqslant \des(i), \\
            \omega(i) & \text{ if } j > \des(i). 
        \end{cases} \\
    \]
    For \ref{item:cubical-horizontal} \ref{item:cubical-horizontal-mid}, the argument is analogous to the Proof of \Cref{lem:induction} \ref{item:main-mid-row}. \\
    For \ref{item:cubical-horizontal} \ref{item:cubical-horizontal-bot}, we have the following recursive formula 
    \[
    \pmb{\bm{\gamma}}_{\desc}^{\asce} = \mathbf{A}_{\desc}^{\asce} \cdot \pmb{\bm{\gamma}}_{\asce}^{\asce}, \quad \pmb{\bm{\gamma}}_{\desc} = \mathbf{A}_{\desc} \cdot \mathbf{\gamma}_{\asce}, \quad \pmb{\bm{\gamma}}_{\desc}^{\desc} = \mathbf{A}_{\desc}^{\desc} \cdot \pmb{\bm{\gamma}}_{\asce}^{\desc}, 
    \]
    where all the transition matrices are equal 
    \[
    \mathbf{A}_{\desc}^{\asce} = \mathbf{A}_{\desc} = \mathbf{A}_{\desc}^{\desc} = \begin{pmatrix} a_{ij} \end{pmatrix}, \text{ with } a_{ij} = 
        \begin{cases}
            \omega(i) y & \text{ if } j \leqslant \des(i), \\
            0 & \text{ if } j > \des(i). 
        \end{cases} \\
    \]

    For \ref{item:cubical-vertical} and \ref{item:cubical-depth}, we have the following cases, depending on which row we consider.  
    \begin{enumerate}[(1)]
        \item \label{item:cubical-back-top} The back row on the top square forms interlacing sequences with all the other $8$ rows in different ways:  
            \begin{enumerate}
                \item For all $1 \leqslant i \leqslant j \leqslant \ell$, $\gamma_{i, \asce}^{\asce}$ interlaces with $\gamma_{j, b}^{t}$ if neither $b$ nor $t$ is $\desc$. 
                \item For all $1 \leqslant i, j \leqslant \ell$, $\gamma_{i, \asce}^{\asce}$ interlaces with $\gamma_{j, b}^{t}$ if either $t$ or $b$ is $\desc$. 
            \end{enumerate}
        \item \label{item:cubical-mid-top} The middle row on the top square forms interlacing sequences with $5$ other rows in different ways: 
            \begin{enumerate}
                \item For all $1 \leqslant i \leqslant j \leqslant \ell$, $\gamma_{i, \asce}$ interlaces with $\gamma_{j, b}^{t}$ if $t$ is not $\asce$ and $b$ is not $\desc$. 
                \item For all $1 \leqslant i, j \leqslant \ell$, $\gamma_{i, \desc}$ interlaces with $\gamma_{j, b}^{t}$ if $t$ is not $\asce$ and $b$ is $\desc$. 
            \end{enumerate}
        \item \label{item:cubical-front-top} The front row on the top square forms interlacing sequences with $2$ other rows in different ways: 
            \begin{enumerate}
                \item For all $1 \leqslant i \leqslant j \leqslant \ell$, $\gamma_{i, \asce}^{\desc}$ interlaces with $\gamma_{j, b}^{t}$ if $t$ is $\desc$ and $b$ is not $\desc$. 
                \item For all $1 \leqslant i, j \leqslant \ell$, $\gamma_{i, \asce}^{\desc}$ interlaces with $\gamma_{j, b}^{t}$ if $t$ and $b$ are both $\desc$. 
            \end{enumerate}

        \item \label{item:cubical-back-mid} The back row on the middle square forms interlacing sequences with $5$ other more downward or outward rows in different ways: 
            \begin{enumerate}
                \item For all $1 \leqslant i \leqslant j \leqslant \ell$, $\gamma_{i}^{\asce}$ interlaces with $\gamma_{j}$, $\gamma_{j, \desc}$, and $\gamma_{j, \desc}^{\asce}$. 
                \item For all $1 \leqslant i, j \leqslant \ell$, $\gamma_{i}^{\asce}$ interlaces with $\gamma_{j}^{\desc}$ and $\gamma_{j, \desc}^{\desc}$.
            \end{enumerate}
        \item \label{item:cubical-mid-mid} The middle row on the middle square forms interlacing sequences with $3$ other more downward or outward rows in the same way: 
            For all $1 \leqslant i \leqslant j \leqslant \ell$, $\gamma_{i}$ interlaces with $\gamma_{j}^{\desc}$, $\gamma_{j, \desc}^{\desc}$ and $\gamma_{j, \desc}$. 
        \item \label{item:cubical-front-mid} The front row on the middle square forms interlacing sequences with $1$ more downward row: For all $1 \leqslant i \leqslant j \leqslant \ell$, $\gamma_{i}^{\desc}$ interlaces with $\gamma_{j, \desc}^{\desc}$. 
        \item \label{item:cubical-back-bot} The back row on the bottom square forms interlacing sequences with $2$ more outward rows in different ways: 
            \begin{enumerate}
                \item For all $1 \leqslant i \leqslant j \leqslant \ell$, $\gamma_{i, \desc}^{\asce}$ interlaces with $\gamma_{j, \desc}$. 
                \item For all $1 \leqslant i, j \leqslant \ell$, $\gamma_{i, \desc}^{\asce}$ interlaces with $\gamma_{j, \desc}^{\desc}$. 
            \end{enumerate}
        \item \label{item:cubical-mid-bot} The middle row in the bottom square forms interlacing sequences with $1$ more outward row: 
        For all $1 \leqslant i \leqslant j \leqslant \ell$, $\gamma_{i, \desc}$ interlaces with $\gamma_{j, \desc}^{\desc}$. 
    \end{enumerate}
    In the above cases, if the qualifier is $1 \leqslant i \leqslant j \leqslant \ell$, arguments analogous to Proof of \Cref{lem:induction} \ref{item:top-to-mid} and \ref{item:mid-to-bot} apply; if the qualifier is $1 \leqslant i, j \leqslant \ell$, arguments analogous to Proof of \Cref{lem:induction} \ref{item:top-to-mid} together with convexity of interlacing in \Cref{prop:interlacing-properties} apply. The cubical diagram in \Cref{fig:gamma-cube} forms interlacing sequences by induction on $n$, together with repeated applications of interlacing preserving of the transition matrices. 
\end{proof}

\section{Open questions}
\label{sec:open-questions}
In this section, we formulate some additional open questions that we regard as relevant for further research. 
\subsection{Geometric lattices}
Recall that in Section \ref{sec:rank-uniform-labeled-posets} we proved that if an EL-labeled poset $(\P, \lambda)$ is rank-uniform then $\P$ is rank-uniform (Lemma~\ref{lem:el-rank-upper-implies-rank-upper}). We feel it is important to ask ourselves to what extent the converse is true. More explicitly, if we are given a finite rank-uniform bounded poset $\P$, what conditions on $\P$ would ensure that $\P$ necessarily admits an EL-labeling $\lambda$ such that $(\P, \lambda)$ is rank-uniform, or even monotonically rank-uniform? For instance Theorem~\ref{thm:main4-intro} shows that supersolvability is one such condition. Obviously a necessary condition is that $\P$ is EL-shellable. This leads us to the following question. 
\begin{question}\label{question:rank-upper-labeled}
Let $\P$ be an EL-shellable poset. If $\P$ is rank-uniform, does $\P$ admit an EL-labeling $\lambda$ such that $(\P, \lambda)$ is a (monotonic) EL-labeled poset?  
\end{question}

Another property we feel may be interesting to look at is geometricity. A geometric lattice is the lattice of flats of a matroid (see \cite{oxley2006matroid} for a reference on that topic). Geometric lattices are relevant to our problem because they are known to have multiple EL-labelings, as the following classical result shows. 
\begin{proposition}[\cite{bjorner-el-shellable}]
Let $\L$ be a geometric lattice and let $\vartriangleleft$ be a total order on the set $\At(\L)$ of atoms of $\L$. The edge labeling  $\lambda_{\vartriangleleft} : \Edge(\L) \rightarrow (\At(\L), \vartriangleleft)$ defined by
\begin{equation*}
    \lambda_{\vartriangleleft}(s\lessdot t) = \min \{ u \in \At(\L) \, | \, s\vee u = t \}
\end{equation*}
is an EL-labeling. 
\end{proposition}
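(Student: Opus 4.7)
The plan is to verify the three requirements in the definition of an EL-labeling for every interval $[s,t] \subseteq \L$: exhibit a maximal chain with weakly increasing labels, show it is unique among such chains, and show it is lexicographically minimum among \emph{all} maximal chains from $s$ to $t$. I will use only atomicity (every element of $\L$ is a join of atoms) and upper-semimodularity (if $u$ is an atom with $u \not\leqslant x$, then $x \vee u$ covers $x$); semimodularity guarantees in particular that the set $\{u \in \At(\L) : s \vee u = t\}$ is nonempty for every cover $s \lessdot t$, so $\lambda_{\vartriangleleft}$ is well-defined.

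First I would construct the increasing chain greedily. Let $u_1$ be the $\vartriangleleft$-minimum atom with $u_1 \leqslant t$ and $u_1 \not\leqslant s$, and set $s_1 \coloneqq s \vee u_1$, which covers $s$ by semimodularity. Iterate: let $u_{i+1}$ be the minimum atom $\leqslant t$ with $u_{i+1} \not\leqslant s_i$, set $s_{i+1} \coloneqq s_i \vee u_{i+1}$, and continue until $s_k = t$, which happens after finitely many steps by atomicity. By construction $\lambda_{\vartriangleleft}(s_{i-1}, s_i) = u_i$. The labels are actually strictly increasing: any atom $u \vartriangleleft u_{i+1}$ with $u \leqslant t$ must already satisfy $u \leqslant s_i$ (by minimality of $u_{i+1}$), hence $u \leqslant s_{i+1}$, so $u$ can never be chosen at a later step.

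Uniqueness is the main obstacle. Let $C' \colon s = s'_0 \lessdot \cdots \lessdot s'_k = t$ be any maximal chain with weakly increasing labels $\lambda'_1 \trianglelefteqslant \cdots \trianglelefteqslant \lambda'_k$. I would show $\lambda'_1 = u_1$, which forces $s'_1 = s \vee u_1 = s_1$, and then induct on the interval $[s_1, t]$. One inequality is immediate: $\lambda'_1$ is an atom below $t$ but not below $s$, so $\lambda'_1 \trianglerighteqslant u_1$. For the reverse, the key observation is that the globally smallest such atom $u_1$ must be absorbed somewhere along $C'$: let $j$ be minimal with $u_1 \leqslant s'_j$. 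Then $u_1 \not\leqslant s'_{j-1}$, so $s'_{j-1} \vee u_1$ strictly covers $s'_{j-1}$ by semimodularity; since it is also $\leqslant s'_j$ and $s'_{j-1} \lessdot s'_j$, equality $s'_{j-1} \vee u_1 = s'_j$ holds. Hence $\lambda'_j \trianglelefteqslant u_1$, and combining with the weak-increase $\lambda'_1 \trianglelefteqslant \lambda'_j$ yields $\lambda'_1 \trianglelefteqslant u_1$, as required.

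Finally, lex-minimality of the greedy chain $C$ follows from the same choice principle: if a maximal chain $C'$ first deviates from $C$ at step $i$, then $\lambda'_i$ is an atom $\leqslant t$ not below $s_{i-1} = s'_{i-1}$, so $\lambda'_i \trianglerighteqslant u_i$ by definition of $u_i$; equality would force $s'_i = s_{i-1} \vee u_i = s_i$, contradicting the deviation at step $i$. Hence $\lambda'_i \vartriangleright \lambda_i(C)$, which completes the verification that $\lambda_{\vartriangleleft}$ is an EL-labeling.
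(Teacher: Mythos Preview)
The paper does not supply its own proof of this proposition: it is quoted as a classical result of Bj\"orner and cited to \cite{bjorner-el-shellable} without argument. Your proof is correct and follows the standard route (greedy construction of the rising chain, absorption argument for uniqueness, and the same minimality principle for lex-minimality); this is essentially Bj\"orner's original argument. One small presentational point: your sentence justifying that the greedy labels are strictly increasing is slightly roundabout---the cleanest phrasing is simply that the set of atoms $\leqslant t$ not below $s_i$ is contained in the set of atoms $\leqslant t$ not below $s_{i-1}$, whence $u_{i+1} \trianglerighteqslant u_i$, with strictness because $u_i \leqslant s_i$.
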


We also propose the following variant of Question~\ref{question:rank-upper-labeled}. 

\begin{question}
Let $\L$ be a geometric lattice. If $\L$ is rank-uniform, does there exist a total order $\vartriangleleft$ on $\At(\L)$ such that $(\L, \lambda_{\vartriangleleft})$ is a monotonic rank-uniform labeled poset? 
\end{question}

This question seems already interesting for geometric lattices of rank $3$. If $\L$ is a rank-uniform geometric lattice of rank $3$, then for any total order $\vartriangleleft$ on $\At(\L)$, the labeled poset $(\L, \lambda_{\vartriangleleft})$ is rank-uniform. Indeed, the uniformity at rank $2$ is always satisfied. For any $s\in \L$ of rank $1$ and for all $i \geqslant1$ we have $\omega_s(i) = 1$ if $i \leqslant W_1(s)$ and $\omega_s(i) = 0$ otherwise, which proves uniformity of width. For all $t\gtrdot s \in \L$ we have $\des(s,t) = 0$ if $\Ind(s,t) = 1$ and $\des(s,t) = 1$ otherwise, which proves uniformity of descent. Only the question of monotonicity at rank 0 remains, which we reformulate below. 

\begin{question}
Let $\L$ be a geometric lattice of rank $3$. If $\L$ is rank-uniform, does there exist a total order $\vartriangleleft$ on $\At(\L)$ such that the quantity 
\begin{equation*}
    \des_0(s) = |\{ t \gtrdot s \, |\, \exists s' \vartriangleleft s \textrm{ s.t. } s' \leqslant t \}|
\end{equation*}
defined for all atoms $s$, is weakly increasing with respect to $\vartriangleleft$?  
\end{question}

On the topic of interlacing, recall that Theorem~\ref{thm:gamma-interlacing-aug-contract} and the paragraph following it show that if $\L$ is a UMEL-shellable geometric lattice, then for every atom $a \in \L$ we have the interlacing relations 
\[\uH_{[a, \un]} \preceq \uH_{\L}, \quad \H_{[a, \un]} \preceq \H_{\L}, \quad \H_{[a, \un]} \preceq \uH_{\L}.\]
This motivates the following strengthenings of Conjecture~\ref{conj:real-rootedness-chow}.

\begin{conjecture}
\label[conjecture]{conj:contraction-interlace}
    Let $\L$ be a geometric lattice, and let $a$ be an atom of $\L$. The (augmented) Chow polynomial of the upper-interval $[a, \un]$ interlaces the (augmented) Chow polynomial of $\L$: 
    \[
    \uH_{[a, \un]} \preceq \uH_{\L}, \quad 
    \H_{[a, \un]} \preceq \H_{\L}, \quad \H_{[a, \un]} \preceq \H_{\L}.\] 
\end{conjecture}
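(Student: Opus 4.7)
The conjecture is already established for UMEL-shellable geometric lattices by Theorem~\ref{thm:gamma-interlacing-aug-contract}, together with the observation following it that atomicity---which holds for every geometric lattice---forces the hypothesis of part (iv) to be satisfied. My plan for an arbitrary geometric lattice $\L$ is to combine the interlacing-family approach of Section~\ref{sec:chow-RR} with a refinement indexed by atoms. By Lemma~\ref{lem:gamma-transform-preserves-interlacing} both conjectural interlacings reduce to the corresponding statements for $\gamma$-polynomials, and by Theorem~\ref{thm:gamma-flag-h-vector} these $\gamma$-polynomials are generating functions over maximal chains whose descent set (under any choice of EL-labeling) is stable.

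The strategy would be to fix an EL-labeling $\lambda$ of $\L$---for instance, an atom-order labeling $\lambda_{\vartriangleleft}$ as in Section~\ref{sec:open-questions}---and decompose $\gamma(\uH_{\L};y)$ and $\gamma(\H_{\L};y)$ as sums indexed by the initial label of a maximal chain, exactly as in Section~\ref{sec:chow-RR}. In the UMEL-shellable case, lexicographic minimality of the unique increasing chain identifies the piece corresponding to the smallest label with $\gamma(\H_{[a,\un]};y)$, and the desired interlacing follows by convexity of interlacing (Proposition~\ref{prop:interlacing-properties}) applied to an interlacing family. For a general geometric lattice, I would aim to show that the analogous family of atom-indexed polynomials $\{\gamma^{\L,a}(y)\}_{a \in \At(\L)}$ is interlacing and that its first entry (under a suitable ordering of the atoms) coincides with $\gamma(\H_{[a,\un]};y)$, so that convexity again delivers the conclusion. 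A similar refinement, discarding the contributions with $1 \in \Des$, would handle the non-augmented inequality.

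The main obstacle---and the reason the conjecture remains open---is establishing that this atom-indexed family is interlacing. The arguments in Section~\ref{sec:chow-RR} crucially rely on the monotonicity condition to guarantee that the transition matrix of the recursion satisfies the hypothesis of Lemma~\ref{lem:two-column-monotonicity-interlacing}, and this condition can fail in arbitrary geometric lattices. To overcome it, one could explore (a) identifying a weaker ``averaged'' monotonicity condition on geometric lattices that still certifies Wronskian nonnegativity of the relevant recursion; (b) exploiting the hard Lefschetz theorem for the (augmented) Chow ring of $\L$ to extract Wronskian positivity directly from the Hodge-theoretic structure, bypassing the combinatorial recursion entirely; or (c) bridging to the TN-poset framework of Br\"and\'en--Vecchi, allowing interlacing to be transferred through Lemma~\ref{lemma:duality} once sufficient total nonnegativity is verified for geometric lattices. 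Each route would require substantively new structural input beyond the techniques developed in this paper.
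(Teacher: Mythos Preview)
The statement is a conjecture that the paper explicitly leaves open in Section~\ref{sec:open-questions}; there is no proof in the paper to compare against. Your proposal correctly records this and accurately recalls the one case the paper does settle: for UMEL-shellable geometric lattices the interlacing follows from Theorem~\ref{thm:gamma-interlacing-aug-contract} together with the atomicity remark immediately after it, exactly as you say.

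Beyond that partial result, what you have written is a survey of possible attack routes rather than a proof, and you are candid about this. The atom-indexed decomposition you describe is precisely the mechanism the paper uses in the UMEL case, and you correctly isolate the monotonicity hypothesis as the point where the recursion argument of Section~\ref{sec:chow-RR} breaks for arbitrary geometric lattices. One caveat worth flagging: your reduction to $\gamma$-polynomials via Lemma~\ref{lem:gamma-transform-preserves-interlacing} presupposes that the (augmented) Chow polynomials of $\L$ and $[a,\un]$ are already real-rooted, since interlacing is only defined for real-rooted polynomials. For general geometric lattices this is Conjecture~\ref{conj:real-rootedness-chow}, which is itself open; so any argument along the lines you sketch would need to establish real-rootedness simultaneously rather than treat it as given. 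Your three speculative routes (a)--(c) are reasonable directions, but none of them is developed far enough here to constitute progress on the conjecture.
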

In matroid terminology, the above conjecture is asserting that for any (simple) matroid $\M$ on the ground set $E$, the contraction by a single element $i\in E$ satisfies $\uH_{\M/i}\preceq \uH_{\M}$, $\H_{\M/i} \preceq \H_{\M}$, and $\H_{\M/i} \preceq \uH_{\M}$.

\begin{conjecture}
\label[conjecture]{conj:truncation-interlace}
    Let $\L$ be a geometric lattice, and let $a$ be an atom of $\L$. The (augmented) Chow polynomial of the principle truncation of $\calL$ interlaces the (augmented) Chow polynomial of $\L$: 
    \[
    \uH_{\Tr(\calL)} \preceq \uH_{\L}, \quad \text{ and } \quad 
    \H_{\Tr(\calL)} \preceq \H_{\L}.\] 
\end{conjecture}

\subsection{TN-posets}

Br\"and\'en and Saud-Maia-Leite \cite{branden-saud2} introduced the concept of a totally nonnegative poset (TN-poset for short). By definition, a TN-poset is a (lower) rank-uniform poset whose lower triangular matrix of (lower) Whitney numbers of the second kind $(W_{n-i}(j))_{i,j}$ is totally nonnegative (meaning that all its minors are nonnegative). 

In \cite{branden-saud1} and \cite{branden-vecchi2} the authors show that the chain polynomial and the Chow polynomial respectively of a TN-poset are real-rooted. Due to the similarity of the examples considered in those articles and the present work (Dowling lattices, projective geometries, uniform matroids, etc.) it is natural to inquire to what extent the notions of TN-posets and UMEL-shellable posets relate to each other. On that topic we propose the following conjecture. 

\begin{conjecture}\label{conj:tn-geom-lattices-are-umel}
    Let $\M$ be a matroid such that $\mathcal{L}(\M)$ is a TN-poset. Then $\mathcal{L}(\M)$ is UMEL-shellable.
\end{conjecture}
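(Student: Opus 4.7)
The plan is to proceed in three steps: extract structural consequences from the TN hypothesis, construct an EL-labeling adapted to the TN structure, and verify the UMEL conditions directly. The first and likely most difficult step is to show that the TN condition on the lower Whitney matrix of $\mathcal{L}(\M)$ forces the additional upper rank-uniformity required for UMEL-shellability. A natural attack is to prove that such an $\M$ must be a perfect matroid design---a matroid in which $|F'|-|F|$ depends only on $\rk F$ and $\rk F'$ whenever $F \subseteq F'$---so that intervals $[F, \un]$ of the same rank are automatically upper rank-uniform. All examples of TN geometric lattices treated in \cite{branden-saud1, branden-vecchi2}, namely uniform matroids and projective and affine geometries, are perfect matroid designs, so a classification result in this spirit seems plausible.

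Assuming upper rank-uniformity has been established, I would construct the EL-labeling from an atom ordering $\lambda_{\vartriangleleft}$ of the form recalled in Section~\ref{sec:open-questions}. The TN structure should supply the required ordering: since TN matrices admit a canonical factorization into Jacobi-type bidiagonal blocks, one can try to order the atoms so that those of ``smaller contribution'' to the lower Whitney matrix come first in $\vartriangleleft$. With such an ordering, the descent function $\des_k(\medbullet)$ at each rank $k$ should become weakly increasing in the label index, which is precisely the monotonicity condition.

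The third step is to verify uniform width and uniform descent. These should reduce to the observation that the interaction between two adjacent ranks is controlled by the appropriate $2 \times 2$ submatrix of the Whitney matrix, which is nonnegative by the TN hypothesis, combined with the isomorphism-type invariance of upper intervals that comes from the perfect matroid design structure. At this point the verification becomes largely formal, since the analogous check for supersolvable rank-uniform lattices carried out in the proof of Theorem~\ref{thm:main4-intro} shows that uniformity and monotonicity can usually be bootstrapped from very modest local data about atoms once upper rank-uniformity is known.

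The hard part will be step one: the TN condition is a global statement about the Whitney matrix, whereas upper rank-uniformity is a local statement about individual upper intervals, and bridging this gap without an explicit structural theorem seems difficult. If the equivalence with perfect matroid designs does not hold cleanly, a fallback strategy would be to weaken the UMEL-shellability framework to accommodate lower rank-uniform labelings and re-derive the interlacing arguments of Sections~\ref{sec:h-RR} and~\ref{sec:chow-RR} in the dual setting; this could yield versions of Theorems~\ref{thm:main1-intro} and~\ref{thm:main2-intro} applicable to TN-posets directly, thereby circumventing Conjecture~\ref{conj:tn-geom-lattices-are-umel} altogether at the price of obtaining a parallel, rather than unified, theory.
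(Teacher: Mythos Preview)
This statement is an \emph{open conjecture} in the paper (it appears in Section~\ref{sec:open-questions}, ``Open questions''), so there is no proof in the paper to compare against. The authors record only one piece of supporting evidence: lower rank-uniform geometric lattices are already known to be upper rank-uniform by \cite[Theorem~4.5]{brylawski}. That single citation dismantles your assessment of where the difficulty lies.

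You identify step one, deducing upper rank-uniformity from the TN hypothesis, as ``the hard part.'' It is not: TN-posets are lower rank-uniform by definition, and Brylawski's theorem immediately promotes this to upper rank-uniformity for geometric lattices. No perfect-matroid-design classification is needed for this step, and your proposed detour through PMDs is both unnecessary and unjustified (you do not argue that TN forces PMD, only that the handful of known examples happen to be PMDs).

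The actual obstacle is your step two. Upper rank-uniformity of $\mathcal{L}(\M)$ as a poset is far weaker than the existence of an EL-labeling $\lambda$ making $(\mathcal{L}(\M),\lambda)$ a \emph{monotonic rank-uniform labeled} poset: you must exhibit a specific atom order under which the width functions $\omega_k(\medbullet)$ and descent functions $\des_k(\medbullet)$ are constant across each rank \emph{and} $\des_k(\medbullet)$ is weakly increasing. Your suggestion to extract an atom order from a bidiagonal factorization of the Whitney matrix is a heuristic, not an argument; you give no mechanism linking the numerical factorization data to a combinatorial ordering of atoms, and no reason why that ordering would make $\des_k(\medbullet)$ monotone. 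The paper's own discussion (Question~8.2 and the rank-$3$ specialization following it) makes clear that even for arbitrary rank-uniform geometric lattices the existence of such an ordering is genuinely unknown.

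Your fallback strategy would not prove the conjecture at all: dualizing the interlacing machinery to handle lower rank-uniform posets directly is essentially what \cite{branden-saud1,branden-saud2,branden-vecchi2} already do, and the paper explicitly frames Conjecture~\ref{conj:tn-geom-lattices-are-umel} as the question of whether the two frameworks can be \emph{unified} rather than run in parallel.
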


The converse of the above conjecture is not true, because partition lattices are UMEL-shellable and they fail to be TN-posets. The conjecture is supported by the fact that TN-geometric lattices are in particular lower rank-uniform, and it is known that lower rank-uniform geometric lattices are upper rank-uniform (see \cite[Theorem 4.5]{brylawski}). 

A dual version of Conjecture~\ref{conj:tn-geom-lattices-are-umel}, which we propose as a question, would be the following.

\begin{question}
    Let $\M$ be a matroid such that $\mathcal{L}(\M)$ is a UMEL-shellable poset. Is $\mathcal{L}(\M)^*$ a TN-poset?
\end{question}

The above question is strongly related to a conjecture of Br\"and\'en and Saud-Maia-Leite \cite[Conjecture~4.3]{branden-saud1}\footnote{In the current arXiv posting of \cite{branden-saud1}, the conjecture contains a misprint. Br\"and\'en (personal communication) informed us that they conjecture that $\mathcal{L}(\M)^*$ is a TN-poset whenever $\mathcal{L}(\M)$ is rank-uniform.}. 
\smallskip

We can ask analogous questions in the broader generality of graded bounded posets. However, since UMEL-shellable posets are EL-shellable, we need to restrict our attention to EL-shellable TN-posets (there exist TN-posets that are not Cohen--Macaulay). We ask the following questions. 

\begin{question}
    If $\P$ is a UMEL-shellable poset, is $\Pdual$ a TN-poset? Conversely if $\P$ is an EL-shellable TN-poset, is $\Pdual$ UMEL-shellable?  
\end{question}

An affirmative answer to the first of the above two questions would be very interesting, as this would provide a combinatorial criterion for proving that a poset is TN.

\bibliographystyle{amsalpha}
\bibliography{biblio}

\end{document}